%%%%%%%%%%%%%%%%%%%%%%%% file template.tex %%%%%%%%%%%%%%%%%%%%%%%%%
%%
%% This is a general template file for the LaTeX package SVJour3
%% for Springer journals.          Springer Heidelberg 2010/09/16
%%
%% Copy it to a new file with a new name and use it as the basis
%% for your article. Delete % signs as needed.
%%
%% This template includes a few options for different layouts and
%% content for various journals. Please consult a previous issue of
%% your journal as needed.
%%
%%%%%%%%%%%%%%%%%%%%%%%%%%%%%%%%%%%%%%%%%%%%%%%%%%%%%%%%%%%%%%%%%%%%
%%
%% First comes an example EPS file -- just ignore it and
%% proceed on the \documentclass line
%% your LaTeX will extract the file if required
%\begin{filecontents*}{example.eps}
%%!PS-Adobe-3.0 EPSF-3.0
%%%BoundingBox: 19 19 221 221
%%%CreationDate: Mon Sep 29 1997
%%%Creator: programmed by hand (JK)
%%%EndComments
%gsave
%newpath
%  20 20 moveto
%  20 220 lineto
%  220 220 lineto
%  220 20 lineto
%closepath
%2 setlinewidth
%gsave
%  .4 setgray fill
%grestore
%stroke
%grestore
%\end{filecontents*}
%%
%Firas: Commented out all of the above
\RequirePackage{fix-cm}
\documentclass[smallextended,numbook,envcountsame]{svjour3}    %Firas: added this
\smartqed  % flush right qed marks, e.g. at end of proof
\usepackage{graphicx}
 \usepackage{mathptmx}      % use Times fonts if available on your TeX system   %Firas: uncommented this
%
% insert here the call for the packages your document requires
%\usepackage{latexsym}
% etc.
%
% please place your own definitions here and don't use \def but
% \newcommand{}{}
%
% Insert the name of "your journal" with
 \journalname{Probability Theory and Related Fields}
\usepackage{amsmath,amssymb,amsfonts,mathrsfs}
\RequirePackage[numbers,comma,square,sort&compress]{natbib}

\usepackage{graphicx,tikz,subfigure,color}
\usetikzlibrary{decorations.pathreplacing}

\usepackage{nicefrac}
\usepackage{enumerate}

\RequirePackage{color}
\RequirePackage[colorlinks, urlcolor=my-blue,linkcolor=my-link,citecolor=my-green]{hyperref}
\definecolor{my-link}{rgb}{0.5,0.0,0.0}
\definecolor{my-blue}{rgb}{0.0,0.0,0.6}
\definecolor{my-red}{rgb}{0.5,0.0,0.0}
\definecolor{my-green}{rgb}{0.0,0.5,0.0}
\definecolor{nicos-red}{rgb}{0.75,0.0,0.0}
\definecolor{light-gray}{gray}{0.6}
\definecolor{really-light-gray}{gray}{0.8}

%Command to not generate a shitload of files
%\nofiles

%%%%%%%%
%\addtolength{\hoffset}{-.8in} 
%\addtolength{\textwidth}{1.6in}
%\addtolength{\voffset}{-.4in} 
%%\addtolength{\footskip}{.3in}
%%\setlength{\oddsidemargin}{0in}
%%\setlength{\evensidemargin}{0in}
%%\setlength{\footskip}{0.2in}
%%\setlength{\topmargin}{0.5in}
%\addtolength{\textheight}{.7in}
%%\baselineskip=12pt   %%seems to make no difference? 
%\linespread{1.1}     %this works nicely to produce a more readable cropped 2-1
%
%
%\newtheorem{theorem}{\sc Theorem}[section]
%\newtheorem{lemma}[theorem]{\sc Lemma}
%\newtheorem{proposition}[theorem]{\sc Proposition}
%\newtheorem{corollary}[theorem]{\sc Corollary}
%\newtheorem{assumption}[theorem]{\bf Assumption}
%\newtheorem{definition}[theorem]{\it Definition}
%%\newtheorem{remark}[theorem]{\sc Remark}
%\numberwithin{equation}{section}
%\theoremstyle{remark}
%\newtheorem{remark}[theorem]{Remark}
%\newtheorem{example}[theorem]{Example}

\newcommand{\be}{\begin{equation}}
\newcommand{\ee}{\end{equation}}

\providecommand{\abs}[1]{\vert#1\vert}

\providecommand{\pp}[1]{\langle#1\rangle}
\newcommand{\fl}[1]{\lfloor{#1}\rfloor}

%%generic definitions 

\def\cG{\mathcal{G}}

  \def\cT{\mathcal{T}}

%\font \mymathbb = bbold10 at 11pt

\def\Phat{\widehat\P}
\def\Omhat{\widehat\Omega}

\def\what{\widehat\w}

\def\t{\mathfrak t}
\def\tmin{{\underline\t}}
\def\tmax{{\bar\t}}

\def\kS{\mathfrak{S}}

\def\bE{\mathbb{E}}
\def\bN{\mathbb{N}}
\def\bP{\mathbb{P}}

\def\bR{\mathbb{R}}
\def\bZ{\mathbb{Z}}

 \def\Z{\bZ}  \def\R{\bR}\def\N{\bN}

\def\w{\omega}

\def\e{\varepsilon}

\def\ri{{\mathrm{ri\,}}}

\def\m1{\mathbf{1}}

\def\kS{\mathfrak S}

%%new def's 
 \def\Vvv{{\rm\mathbb{V}ar}}

    %%char function

%%notation specific to this material 
\def\E{\bE}
\def\P{\bP} %% environment measure 
 %environment at a level 
  %spatial shift  
  %set of admissible steps 
 %jump variable chosen from transition for couplings
   %macroscopic flux
    %%%intersections at level j 

%\def\backx{\overset{\leftarrow}{x}}

%\def\backpi{\overset{\leftarrow}{\mathbf{x}}}

 %digamma function 
 %trigamma function 
\def\funct lp{L} %a function needed
\def\funct lpbar{\bar L} %a function needed

%\def\note#1{{\color{nicos-red}{\bf((} #1 {\bf))}}}

%Firas' notation

%\newcommand{\one}{\mbox{\mymathbb{1}}}

\def\range{\mathcal R}
\def\Uset{\mathcal U}
\def\Diff{\mathcal D}
\def\EP{\mathcal E}

\def\cV{\mathcal V}
%\def\Yc{{\check Y}}

   %exit point from x-axis
   %exit point from x-axis
   %exit point from y-axis
%\def\dxexit{{m^{*,\swcorner}}  \def\dyexit{{n^{*,\swcorner}}}    %their duals
   %entrance point to x-boundary
   %entrance point  to y-boundary

%\def\gne{G^{\textrm{\tiny{NE}},\xi}}

\def\gpp{{g_{\text{pp}}}}

%\def\Gne{\wh G}

%Firas' notation
%\def\range{\mathcal R}
%\def\Uset{\mathcal U}
%\def\Dset{\mathcal D}
%\def\G{\mathcal G}
%\def\cA{\mathcal A}
%\def\K{\mathcal K}
%\def\L{\mathcal L}
%\def\Yc{{\check Y}}
%\def\Y{Y}
%\def\necorner{{{}_\urcorner}}
%\def\swcorner{{{}^\llcorner}}
%\def\xexit{{m^{\swcorner}}}   %exit point from x-axis
%\def\yexit{{n^{\swcorner}}}   %exit point from y-axis
%%\def\dxexit{{m^{*,\swcorner}}  \def\dyexit{{n^{*,\swcorner}}}    %their duals
%\def\xentrance{{m^{\necorner}}}   %entrance point to x-boundary
%\def\yentrance{{n^{\necorner}}}   %entrance point  to y-boundary
%\def\rect{{\mathfrak R}}
%\def\V{{V}}
%\def\Vw{V_0}   %%%% the part of the potential that depends only on \w 

\def\Gpp{G}
\def\gpp{g_{\text{\rm pp}}}

\def\ri{{\mathrm{ri\,}}}
 %flux
%\def\gppa{{\bar g}_{\text{\rm pp}}}

\def\ximin{{\underline\xi}}
\def\ximax{{\overline\xi}}
\def\zetamin{{\underline\zeta}}
\def\zetamax{{\overline\zeta}}

\def\B{{B}}

 %%%%  Mai-Pra queueing notation 

\def\Ombig{{\widehat\Omega}}
\def\Pbig{{\widehat\P}}
\def\Ebig{{\widehat\E}}
\def\kSbig{{\widehat\kS}}
 
\def\OBPbig{(\Ombig, \kSbig, \Pbig)}

%Nicos notation

%\def\Gsw{\overline{G}\!\!\!\!\phantom{G}^{\rm SW}}

%\def\Ibulk{I^{\rm bulk}}

%\def\Jbulk{J^{\rm bulk}}

%\font\sz=cmr4 at 4pt

%%%%%%%   competition interface  
\def\cid{\xi_*}
\def\Bci{B^*}

\def\cidmin{\underline\xi_{^{\scriptstyle*}}}
\def\cidmax{\overline\xi_*}
   %limit of c.i. 

%%%%Timo
\def\pp{p}  %the moment hypothesis  
\def\Ew{m_0}   %mean of \w_0
\def\opc{\vec p_c} 
  %the northeast edge of the a-triangle 

   %triangle size 
\def\etamin{{\underline\eta}}
\def\etamax{{\overline\eta}}

%%%%%  from Firas-bultima  

\def\cidl{\cid^{\text\rm{(l)}}}
\def\cidr{\cid^{\text\rm{(r)}}}

\def\cT{\mathcal T}

\definecolor{darkgreen}{rgb}{0.0,0.5,0.0}
\definecolor{darkblue}{rgb}{0.0,0.0,0.3}
\definecolor{nicosred}{rgb}{0.65,0.1,0.1}
\definecolor{light-gray}{gray}{0.7}
\allowdisplaybreaks[1]

\allowdisplaybreaks[1]
\setcounter{tocdepth}{1}   %% only sections in toc  

\newenvironment{proofof}[2]{\removelastskip\vspace{6pt}\noindent
 {\it Proof  #1.}~\rm#2}{\par\vspace{6pt}}

\begin{document}

\title{Geodesics and the competition interface for the corner growth model%\thanks{Grants or other notes
%about the article that should go on the front page should be
%placed here. General acknowledgments should be placed at the end of the article.}
}
%\subtitle{Do you have a subtitle?\\ If so, write it here}

\titlerunning{Geodesics}        % if too long for running head

\author{N.\ Georgiou         \and
        F.\ Rassoul-Agha \and
        T.\ Sepp\"al\"ainen %etc.
}

\authorrunning{N.\ Georgiou, F.\ Rassoul-Agha, and T.\ Sepp\"al\"ainen} % if too long for running head

\institute{N.\ Georgiou \at
              School of Mathematical and Physical Sciences, Department of Mathematics, University of Sussex, Falmer Campus, Brighton BN1 9QH, UK \\
              \email{n.georgiou@sussex.ac.uk}           %  \\
%\urladdr{http://www.maths.sussex.ac.uk/~ng221}
%\urladdr{http://www.sussex.ac.uk/profiles/329373} 
%             \emph{Present address:} of F. Author  %  if needed
           \and
           F.\ Rassoul-Agha \at
              Mathematics Department, University of Utah, 155 South 1400 East, Salt Lake City, UT 84109, USA \\
              \email{firas@math.utah.edu}
           \and
           T.\ Sepp\"al\"ainen \at
              Mathematics Department, University of Wisconsin-Madison, Van Vleck Hall, 480 Lincoln Dr., Madison, WI 53706, USA \\
              \email{seppalai@math.wisc.edu}
}

%\date{Received: August 2015 / Accepted: July 2016} %cocycles
\date{Received: August 2015 / Accepted: May 2016}  
% The correct dates will be entered by the editor

\maketitle

\begin{abstract}
 We study  the directed last-passage percolation model on the planar square  lattice  with nearest-neighbor steps and general i.i.d.\ weights on the vertices,  outside of  the class of exactly solvable models.   Stationary cocycles are constructed for this percolation model from queueing fixed points.  These cocycles serve as boundary conditions for stationary last-passage percolation,  solve  variational formulas that characterize limit shapes,  and   yield existence of  Busemann functions in directions where the shape has some regularity. 
In a sequel to this paper the cocycles are used to prove results about semi-infinite geodesics and the competition interface.

\keywords{Busemann function\and coalescence\and cocycle\and competition interface\and %corrector,  
directed percolation\and geodesic\and last-passage percolation.}
% \PACS{PACS code1 \and PACS code2 \and more}
% \subclass{MSC code1 \and MSC code2 \and more}
\subclass{60K35\and 65K37} 
\end{abstract}

%\tableofcontents

\section{Introduction}

 In the  {\sl corner growth model}, or    directed  nearest-neighbor  last-passage percolation (LPP)   on the lattice $\Z^2$,   i.i.d.\ random weights $\{\w_x\}_{x\in\Z^2}$ are  used to define {\sl last-passage times}  $\Gpp_{x,y}$ between  lattice points $x\le y$ in $\Z^2$ by 
 \be \Gpp_{x,y}=\max_{x_\centerdot}\sum_{k=0}^{n-1}\w_{x_k} . 
	\label{Gxy1}\ee
The maximum is over paths $x_\centerdot=\{x=x_0, x_1, \dotsc, x_n=y\}$ that satisfy  $x_{k+1}-x_k\in\{e_1,e_2\}$ (up-right paths). 
%\textcolor{blue}{Using the subadditive ergodic theorem one can show that the scaled quantity $n^{-1}\Gpp_{0,\fl{n\xi}}$ has an almost sure limit $\gpp(\xi)$ for each $\xi\in\R_+^2$.}
{\sl Geodesics} are paths that maximize in \eqref{Gxy1}.
Geodesics are unique  if $\w_x$ has a continuous distribution.   For $x\in\Z_+^2$,   the geodesic from $0$ to $x$ must go through either  $e_1$ or $e_2$.   These two clusters are separated by  the  {\sl competition interface} .    The purpose of this paper is to study   the geodesics and  competition interface for the case where the weights are {\sl general}, subject to a lower bound $\w_0\ge c$ and a moment condition $\E\abs{\w_0}^{2+\e}<\infty$.  
We address the key questions of existence, uniqueness, and coalescence of directional semi-infinite geodesics, nonexistence of doubly infinite geodesics, and the asymptotic direction of the competition interface.

Systematic study of geodesics in percolation began with the work of Licea and Newman \cite{Lic-New-96}.  
Their seminal work on  undirected first-passage percolation, summarized in Newman's ICM paper \cite{New-95},   
 utilized a global curvature assumption on the limit shape 
 %(on the scaling limit 
% on \textcolor{blue}{$\gpp$} %of last passage times) 
 to derive properties of geodesics, and as a consequence   the existence of  Busemann functions, which are  limits of gradients of passage times.     
Assuming $\w_0$ has a continuous distribution, %they (partially) answer the above questions in the positive: 
they proved the existence of  a deterministic, full-Lebesgue-measure set of directions
for  which there is a unique geodesic out of every lattice point and that  geodesics in a  given direction from this set coalesce. Furthermore,
for any two such directions $\eta$ and $\zeta$ there are no doubly infinite geodesics whose two ends have directions $\eta$ and $-\zeta$.

The global curvature assumption cannot as yet be checked in percolation models with general weights,  but it can be  verified  in several models with special  features.
One such case is   Euclidean first passage percolation  based on a homogeneous Poisson point process. For this model, Howard and Newman \cite{How-New-01} 
showed that every geodesic has a direction and that in every  fixed direction there is at least one geodesic out of every lattice point.

A number of investigators have built on the  approach opened up by Newman et al.  This has led to impressive  progress in understanding geodesics,  Busemann functions, coalescence, competition, and stationary processes   in directed last-passage percolation models with enough explicit  features to enable verification of the curvature assumptions.  This work is on  models built on  Poisson point processes \cite{Wut-02,Cat-Pim-11,Cat-Pim-12, Bak-Cat-Kha-14,Pim-07}
and on the corner growth model with exponential weights  \cite{Fer-Pim-05,Fer-Mar-Pim-09,Cat-Pim-12,Cat-Pim-13,Pim-13-}.      In the case of the exponential corner growth model,  another set of tools comes from its connection with an  exactly solvable  interacting particle system, namely the  totally asymmetric simple exclusion process  (TASEP).  
%The methods of Newman and collaborators  were then used to prove similar results for other systems with exact solvability features such as  Poisson point process-driven last passage percolation models \cite{Wut-02,Cat-Pim-11,Cat-Pim-12, Bak-Cat-Kha-14}  and the corner growth model with exponential weights  \cite{Fer-Pim-05,Fer-Mar-Pim-09,Cat-Pim-12}.  

% \textcolor{blue}{Results  on the geodesics and competition interface of the exponential  corner growth model rely also crucially on the connection with the totally asymmetric simple exclusion process  (TASEP). (we haven't talked yet about the competition interface results... also, did \cite{Fer-Pim-05,Cat-Pim-12} or even \cite{Cat-Pim-13} really use properties of TASEP to give results about geodesics and the competition interface, or was it only Coupier who sharpened the results by using some results from TASEP?)}
 
 The competition interface of the exponential corner growth model maps to a second-class particle in TASEP, so this object has been studied from both perspectives.  
%The asymptotic direction of the competition interface in the corner growth model with exponential weights has been investigated by several authors. %\cite{Mou-Gui-05,Fer-Kip-95,Fer-Pim-05, Fer-Mar-Pim-09, Cat-Pim-13}.  
An early result of Ferrari and Kipnis  \cite{Fer-Kip-95} proved that the scaled location of the second-class particle in a rarefaction fan  converges in distribution to a uniform random variable. 
%\textcolor{blue}{(the initial profile in this work is given by a convex cone. in TASEP language, the initial distributionis the product of Bernoullis with two different densities on the two sides of $0$. the method relies heavily on the product structure of the initial profile.)}
\cite{Mou-Gui-05} improved this to almost sure convergence with the help of  concentration inequalities and the TASEP variational formula from \cite{Sep-99-aop}.  
\cite{Fer-Pim-05}   gave a different proof of almost sure convergence by   applying the techniques of directed geodesics and then  obtained the distribution of the asymptotic direction of the competition interface from  the TASEP results of \cite{Fer-Kip-95}.  
%map the trajectory of the second-class particle to the competition interface, which implies that the latter has an asymptotic direction and leads to an explicit formula for the distribution of this direction.
%They also give an alternate proof for the asymptotic direction, using  the above results on existence and coalescence of directional geodesics. \textcolor{blue}{But they only consider the first quadrant.}

Subsequently these results on the almost sure convergence of the competition interface and its limiting random angle were extended from the quadrant  to larger classes of initial profiles in two rounds: first by 
\cite{Fer-Mar-Pim-09} still with TASEP and geodesic techniques,  and then by 
\cite{Cat-Pim-13}  by applying their earlier  results  on Busemann functions \cite{Cat-Pim-12}.  
%
%   This connection was utilized by \cite{Fer-Pim-05,Fer-Mar-Pim-09} and then by 
Coupier \cite{Cou-11}  also relied on the TASEP connection   to sharpen the geodesics  results of \cite{Fer-Pim-05}.  
He showed that there are no triple geodesics (out of the origin) in any direction and that every fixed direction has a unique geodesic.  

To summarize, the common thread  of the work above   is the  use of explicit   curvature of the limit shape  to control directional geodesics.  Coalescence of geodesics  leads to   Busemann functions  and   stationary versions of the percolation process.   In exactly solvable cases, such as  the exponential corner growth model,   information  about the distribution of the Busemann functions  is powerful.   For example, it enables calculation of    the distribution of the asymptotic direction of the competition interface \cite{Fer-Pim-05,Fer-Mar-Pim-09,Cat-Pim-13} and  to get bounds on the coalescence time of geodesics \cite{Pim-13-,Wut-02}.  

 An   independent   line of work is that of  Hoffman \cite{Hof-05,Hof-08} on  undirected first passage percolation,  with  general  weights and without 
  regularity assumptions on the limit shape.   \cite{Hof-05} proved that there are at least two semi-infinite geodesics by deriving a contradiction from the assumption that all semi-infinite geodesics coalesce.   The technical proof involved the construction of a Busemann function.  (\cite{Gar-Mar-05} gave  an independent proof with a different method.)   \cite{Hof-08} extended this  to  at least four geodesics.
No further information about geodesics was obtained.   
%, such as information about their direction, were obtained.  
  In another direction, \cite{Weh-97} restricted the number of doubly infinite geodesics to zero or infinity.

%   Hoffman \cite{Hof-05} assumed that all semi-infinite geodesics coalesce and constructed a Busemann function that he used to get a contradiction, proving that there are at least two semi-infinite geodesics.
% Then in \cite{Hof-08} he considered  first passage time differences and used them to prove the existence of as many semi-infinite geodesics as there are faces to the limit shape. In particular, due to lattice symmetry, there are at least four such geodesics.
%No further properties of geodesics, such as information about their direction, were obtained with this approach.

 The idea of studying geodesic-like objects  to produce stationary processes has also appeared in random dynamical systems.
 Article  \cite{E-etal-00} and its extensions \cite{Hoa-Kha-03,Itu-Kha-03,Bak-07, Bak-Kha-10,Bak-13} prove existence and uniqueness of semi-infinite minimizers of an action functional 
to conclude existence and uniqueness of an invariant measure for the Burgers equation with random forcing. 
%(Should we mention anything about hyperbolicity and how it is similar to coalescence?  If so, then \cite{Bor-Kha-13} could be added.)
These articles treat situations where the space is compact or essentially compact.  To make progress in the non-compact case,  the approach of Newman et al.\ was adopted again in \cite{Bak-Cat-Kha-14,Bak-15-}, as mentioned above.

A new approach to the problem of geodesics came in the work of 
Damron and Hanson \cite{Dam-Han-14} who constructed (generalized) Busemann functions from   weak subsequential limits of   first-passage time differences.  This gave  access to  properties of geodesics,   while 
weakening  the need for the global curvature assumption.  For instance, assuming differentiability and strict convexity of the limit shape, \cite{Dam-Han-14} proves that, with probability one, every semi-infinite geodesic has a direction and  for any given direction there exists a semi-infinite 
directed geodesic  out of every lattice point.   They   construct  a   tree  of semi-infinite geodesics in any given  direction such that from every lattice point   emanates a unique
geodesic in this tree  and the tree has no doubly infinite geodesics. 
%They also show how to construct, for any given direction, a set (a tree, actually) of semi-infinite geodesics in this direction such that from every point on the lattice emanates a unique geodesic in this set and all these geodesics coalesce.  ****  is that clear from "tree" ? *****    Their construction also ensures that this set does not have any doubly infinite geodesics. 
However, since the Busemann functions of \cite{Dam-Han-14} are constructed from weak subsequential  limits,  
no claims about uniqueness  of directional geodesics  are made.  The geodesics constructed in their trees all coalesce, but one cannot infer from this that all geodesics in a given direction coalesce.

When  first-passage percolation  is restricted to the upper half plane, \cite{Weh-Woo-98} was the first to rule out the existence of doubly infinite geodesics.  
\cite{Auf-Dam-Han-15}  extended this half-plane  result to more general weight distributions and then applied it to prove coalescence in a tree of geodesics   constructed through a  limit, as in  \cite{Dam-Han-14}  discussed above.  The constructed tree of geodesics again has no infinite backward paths, but it is open to show that the geodesics are asymptotically directed in direction $e_1$.

% $\Z\times\Z_+$, 
%\cite{Auf-Dam-Han-15} shows  the limit of  \cite{Dam-Han-14} when the endpoint of the geodesic goes along the horizontal boundary of the half-plane.  
%This leads to existence, uniqueness, and coalescence of geodesics in this particular direction. As a result,   doubly infinite geodesics with a branch directed in the $e_1$ or $-e_1$ direction can be ruled out.

 \smallskip 
 
 The approach of our work  is the opposite of   the approach  that relies on global curvature, and closer in spirit to \cite{Dam-Han-14}.    We begin by constructing the stationary versions of the percolation process in the form of   {\sl stationary cocycles}.   This comes from related results in  queueing theory \cite{Mai-Pra-03,Pra-03}.      Local regularity assumptions on the limit shape then give enough control to prove that these cocycles are also almost sure Busemann functions.  
 This was done in \cite{Geo-Ras-Sep-15a-}.  
 
  In the present  paper we continue the project by utilizing the cocycles and the Busemann functions to study  geodesics and the competition interface of  the corner growth model  with general weights.   In other words, what is achieved here is a  generalization of the results of  \cite{Cou-11,Fer-Pim-05} without the explicit solvability framework. 
  
   A key technical point   is that a family of {\sl cocycle geodesics}  can be defined locally    by following   minimal gradients of a cocycle.    The coalescence proof of \cite{Lic-New-96} applies to cocycle geodesics.    Monotonicity and continuity properties of these cocycle geodesics allow us to use them to control all geodesics.  In the end we reproduce many of the basic properties of geodesics, some with no assumptions at all and others with local regularity assumptions on the limit shape.  
   Note that, in contrast with the results for the explicitly solvable exponential case,    our results must take into consideration the possibility of corners and linear segments in the limit shape.   
 
To control  the competition interface we characterize it in terms of the cocycles,  as was done in  terms of Busemann functions in  \cite{Fer-Pim-05,Cat-Pim-13,Pim-07}.   Here again we can get interesting results even without regularity assumptions.  For example,   assuming that the weight $\w_x$ has continuous distribution,  the atoms of the asymptotic direction of the competition interface are exactly the corners of the limit shape.  Since the shape is expected to be differentiable, the conjecture is that the asymptotic direction has continuous distribution.  

To extend our results to ergodic  weights and higher dimensions, a  possible strategy that avoids the reliance on queueing theory would be to develop 
 sufficient   control on the gradients 
$\Gpp_{x,\fl{n\xi}}-\Gpp_{y,\fl{n\xi}}$ (or their point-to-line counterparts)  to construct cocycles through   weak limits as $n\to\infty$.  This  worked well for undirected first-passage percolation in   \cite{Dam-Han-14} because  the gradients are uniformly integrable.   Note however that when $\{\w_x\}$ are only   ergodic, the limiting 
shape can  have corners and linear segments, and can even be a finite polygon.

 \smallskip

{\bf Organization of the paper.}  
Section \ref{sec:results} describes the corner growth model and the main results of the paper.  
% These results are the cleanest ones stated under assumptions on the regularity of the limit function  $\gpp(\xi)$.   The properties we use as hypotheses are expected to be true   but  they are not presently known.   Later sections   contain more general results, but at a   price: (a) the statements are not as clean \textcolor{nicos-red}{(too many uses of the word clean/tidy in the last paragraphs! you make it sound so ...dirty :-) how about: sharp, elegant, precise, crisp, clear, succinct, ... )} because they need to take corners and flat segments of 
%\textcolor{blue}{$\gpp$} into consideration and (b) the results are valid on an extended space that supports additional edge  weights (cocycles)  in addition to vertex weights $\w$ in \eqref{Gxy1}.  
%Section \ref{sec:duality} develops a convex duality between directions or velocities $\xi$ and tilts or external fields $h$ that comes from  the relationship of the point-to-point and point-to-line percolation models.  
Section \ref{sec:cocycles}  states the existence and properties of the cocycles and Busemann functions on which all the results of the paper are based.   
%The cocycles define a stationary last-passage model.  The variational formulas for the percolation limits are first solved on the extended space of the cocycles.  
%Section \ref{sec:busemann} develops the existence of  Busemann functions.  
Section \ref{sec:geod} studies cocycle geodesics and  proves our results for geodesics.  
Section \ref{sec:ci-pf}  proves results for the competition interface.  
 Section \ref{sec:solv}  derives the distributions of the asymptotic speed of the left and right competition interfaces for the corner growth model with geometric weights.  This is an exactly solvable case, but this particular feature has not been calculated in the past.
%discusses examples with geometric and exponential   weights $\{\w_x\}$.  These are of course exactly solvable models, but it is useful to see the   theory illustrated in its ideal form.  
%Section \ref{sec:coal} proves the coalescence of cocycle geodesics by adapting the first-passage percolation proof of \cite{Lic-New-96}.  
%Several appendixes come at the end.   
%Appendix \ref{app:q} proves the main theorem of Section \ref{sec:cocycles} by relying on queuing results from \cite{Mai-Pra-03, Pra-03}.   
%Appendix \ref{app:aux} states an ergodic theorem for cocycles proved in \cite{Geo-etal-15-} and 
%Appendix \ref{app-coal} proves the coalescence of cocycle geodesics by adapting the first-passage percolation proof of \cite{Lic-New-96}. 
Appendix \ref{app} has auxiliary results such as an ergodic theorem for cocycles proved in \cite{Geo-etal-15-}.
%proves a lemma that has geometric consequences for $\gppa$.
%Appendix \ref{sec:cone-pf}   proves properties of the  limit $\gpp$ in the case of a percolation cone, in particular differentiability at the edge.  The proofs are adapted from the first-passage percolation work of \cite{Auf-Dam-13, Mar-02}.  
 %Appendix \ref{app:shape}  states the  almost sure shape theorem for the corner growth model from \cite{Mar-04} and proves an $L^1$ version. 

\smallskip

{\bf Notation and conventions.}   $\R_+=[0,\infty)$,  $\Z_+=\{0,1,2,3, \dotsc\}$, $\N=\{1,2,3,\dotsc\}$. The standard basis vectors of $\R^2$ are  $e_1=(1,0)$ and $e_2=(0,1)$ and  the $\ell^1$-norm of $x\in\R^2$  is   $\abs{x}_1=\abs{x\cdot e_1} + \abs{x\cdot e_2}$.   For $u,v\in\R^2$ a  closed line segment on $\R^2$ is denoted by   $[u,v]=\{tu+(1-t)v:  t\in[0,1]\}$, and an open line segment by  $]u,v[=\{tu+(1-t)v:  t\in(0,1)\}$.    Coordinatewise ordering $x\le y$ means that $x\cdot e_i\le y\cdot e_i$ for both $i=1$ and $2$.  Its negation $x\not\le y$ means that   $x\cdot e_1> y\cdot e_1$ or   $x\cdot e_2> y\cdot e_2$.    An admissible or up-right finite path $x_{0,n}=(x_k)_{k=0}^n$, infinite path  $x_{0,\infty}=(x_k)_{0\le k<\infty}$, or doubly infinite path $x_{-\infty,\infty}=(x_k)_{k\in\Z}$  on $\Z^2$ satisfies $x_k-x_{k-1}\in\{e_1,e_2\}$ $\forall k$.  

The basic environment space is $\Omega=\R^{\Z^2}$ whose elements are denoted by $\w$.  There is also a larger product space $\Ombig=\Omega\times\Omega'$ whose elements are denoted by $\what=(\w, \w')$.   

%Parameter  $\pp>2$ appears in a  moment hypothesis  $\E[|\w_0|^{\pp}]<\infty$,  while $p_1$ is the  probability of an open site in an oriented site percolation process.  
 
A statement that  contains $\pm$ or $\mp$ is a combination of two statements: one for the top choice of the sign and another one for the bottom choice.  

%\note{$\w$ and $\w_x$ are reserved for the weights, even though there are other configurations ($\what$, $\tilde\w$) that have more information (edge weights) etc}

%\smallskip

%{\bf Acknowledgement.}  The authors thank Yuri Bakhtin and  Michael Damron for valuable discussions.  

%\section{The corner growth model and main results}  
\section{Main results}  
\label{sec:results}

\subsection{Assumptions} 
The two-dimensional  corner growth model is the last-passage percolation model on the planar square lattice $\Z^2$ with admissible steps   $\{e_1,e_2\}$.  
    $\Omega=\R^{\Z^2}$ is the   space of   environments  or weight configurations $\w=(\w_x)_{x\in\Z^2}$.   The group of spatial translations   $\{T_x\}_{x\in\Z^2}$  acts on $\Omega$ by $(T_x\w)_y=\w_{x+y}$ for $x,y\in\Z^2$.   Let $\kS$ denote the Borel $\sigma$-algebra of $\Omega$.  $\P$ is  a Borel probability measure    on $\Omega$  under which the weights  $\{\w_x\}$  are  independent, identically distributed (i.i.d.) nondegenerate random variables with a $2+\e$ moment.      Expectation under $\P$ is denoted by $\E$.   For a technical reason we also assume $\P(\w_0\ge c)=1$ for some finite constant $c$.   Here is the standing assumption,  valid throughout the paper: 
	\be\begin{aligned} \label{2d-ass}
		&\text{$\P$ is i.i.d., \, $\E[|\w_0|^{\pp}]<\infty$ for some $\pp>2$,\,     $\sigma^2=\Vvv(\w_0)>0$, and }\\
		 &\text{$\P(\w_0\ge c)=1$ for some   $c>-\infty$.} 
	\end{aligned} 
	\ee  
%The constant 
%\[\Ew=\E(\w_0)\]  will appear  frequently.  	
The symbol   $\w$ is reserved for   these $\P$-distributed  i.i.d.\ weights, also later when they are embedded in a larger configuration $\what=(\w, \w')$.  

The only reason for 
  assumption $\P(\w_0\ge c)=1$ is that Theorem \ref{th:cocycles} below is proved in 
   \cite{Geo-Ras-Sep-15a-} by applying  queueing theory.  
  In that context $\w_x$ is a service time and the results   have been proved only for $\w_x\ge 0$.  (The extension to $\w_x\ge c$  is immediate.)   
Once the queueing results are extended to general real-valued  i.i.d.\ weights $\w_x$ subject to the moment assumption in \eqref{2d-ass},  everything in this paper is true  for these general real-valued weights.  

%\textcolor{blue}{When weights $\w_x$ are not independent one could try to bypass the use of queuing theory by developing enough control on the gradients 
%$\Gpp_{x,\fl{n\xi}}-\Gpp_{y,\fl{n\xi}}$ to get weak limits, as is done in  \cite{Dam-Han-14} where these gradients are uniformly integrable due to admissible paths being undirected. Note that when $\w_x$ are only required to be ergodic, the limiting 
%shape may have corners and linear segments.}

%  \smallskip
  
  \subsection{Last-passage percolation} 
%  We turn to describe  last-passage percolation.  
Given an environment $\w$ and two points $x,y\in\Z^2$ with $x\le y$ coordinatewise,
define the {\sl point-to-point last-passage time} by
 	%\be 
	\[ \Gpp_{x,y}=\max_{x_{0,n}}\sum_{k=0}^{n-1}\w_{x_k}.\]
	%\label{eq:Gxy}\ee
The maximum is over paths $x_{0,n}=(x_k)_{k=0}^n$  that start at  $x_0=x$,  end at $x_n=y$ with $n=\abs{y-x}_1$,  and have increments $x_{k+1}-x_k\in\{e_1,e_2\}$. We call such paths {\sl admissible} or {\sl up-right}.
 % with steps $z_k=x_k-x_{k-1}\in\range$.  
% Similar notation  will be used for all finite and infinite vectors and path segments, including 
% $x_{k,\infty}=(x_k, x_{k+1}, \dotsc)$. 

According to the basic shape theorem  (Theorem 5.1(i) of \cite{Mar-04})   there exists a nonrandom continuous  function  $\gpp:\R_+^2\to\R$ such that  
\begin{align}\label{lln5}
%	\gpp(\xi)=\lim_{n\to\infty}\abs{v_n}_1^{-1}\Gpp_{0,v_n}
%	\end{align}
%for any $\xi\in\Uset$ and for any possibly random  sequence $v_n\in\Z_+^2$ such that  $\abs{v_n}_1\to\infty$ and $v_n/\abs{v_n}_1\to\xi$. 
%Furthermore
%\begin{align}\label{L1shape}
\lim_{n\to\infty} n^{-1}\max_{x\in\Z_2^+:\,\abs{x}_1=n}\abs{\Gpp_{0,x}-\gpp(x)}=0\quad\text{$\P$-almost surely}.
\end{align}
The {\sl shape function}   $\gpp$ is  symmetric, concave, and  $1$-homogeneous (i.e.\ $\gpp(c\xi)=c\gpp(\xi)$ for $\xi\in\R_+^2$ and $c\in\R_+$).

\subsection{Gradients and convexity} % of $\boldsymbol\gpp$} 
%Regularity properties of $\gpp$ play a role in  our results,  so we   introduce notation for that purpose.   
Since $\gpp$ is homogeneous,  it is completely determined by its values on 
  $\Uset=\{te_1+(1-t)e_2:t\in[0,1]\}$,   the convex hull of $\range=\{e_1, e_2\}$.    The relative interior $\ri\Uset$ is the open line segment $\{te_1+(1-t)e_2:t\in(0,1)\}$.  
Let
\[  \Diff=\{\xi\in\ri\Uset:\text{ }\gpp\text{ is differentiable at $\xi$}\}\]
be the set of points at which  the gradient $\nabla\gpp(\xi)$
 exists in the usual sense of differentiability of functions of several variables.  
% At $\xi\in\ri\Uset$ this is equivalent to the differentiability of the single variable  function $s\mapsto \gpp(s,1-s)$ at $s={\xi\cdot e_1}/{\abs{\xi}_1}$.  
 By concavity the set $(\ri\Uset)\smallsetminus\Diff$ is at most countable.
 
 A point $\xi\in\ri\Uset$ is an {\sl exposed point} if there exists a vector $v\in\R^2$ such that
 	\be \label{eq:epod}	\begin{aligned}
	   \forall \zeta\in(\ri\Uset)\smallsetminus \{\xi\}: \; \gpp(\zeta)\; <\; \gpp(\xi) + v\cdot(\zeta - \xi) .   \end{aligned}\ee
The set  of {\sl exposed points of differentiability} of $\gpp$ is  
$\EP=  \{ \xi\in\Diff:  \text{\eqref{eq:epod} holds}\}$.  For $\xi\in\EP$ we have $v=\nabla\gpp(\xi)$ uniquely.
%	\be \label{eq:epod}	\begin{aligned}
%	\EP =\{ \xi\in\Diff:  \,  \forall \zeta\in\ri\Uset,  \zeta\neq \xi: \; \gpp(\zeta) < \gpp(\xi) + \nabla\gpp(\xi)\cdot(\zeta - \xi)  \}.
%		 	\EP =\{ \xi\in\ri\Uset:  \ &\gpp \text{ is differentiable at $\xi$}\\ 			&\text{ and }    \forall \zeta \neq \xi: \; \gpp(\zeta) < \gpp(\xi) + \nabla\gpp(\xi)\cdot(\zeta - \xi)  \}.
%		\end{aligned}\ee
Condition \eqref{eq:epod}  is formulated   in terms of $\Uset$ because as a homogeneous function  $\gpp$   cannot have exposed points on $\R_+^2$. 	
		
It is expected but currently unknown that $\gpp$ is differentiable on all of $\ri\Uset$.  But  left and right gradients exist.    A {\sl left limit} $\xi\to\zeta$ on $\Uset$ means that 
$\xi\cdot e_1$ increases to $\zeta\cdot e_1$, while in a {\sl right limit}   
$\xi\cdot e_1$ decreases to $\zeta\cdot e_1$.  
  
For $\zeta\in\ri\Uset$ define one-sided gradient vectors  by
%\[\nabla\gpp(\zeta\pm)\cdot e_i=\lim_{\e\searrow0}\frac{\gpp(\zeta\pm\e e_i)-\gpp(\zeta)}{\pm\e}\,,\quad i=1,2.\]
\begin{align*}
\nabla\gpp(\zeta\pm)\cdot e_1&=\lim_{\e\searrow0}\frac{\gpp(\zeta\pm\e e_1)-\gpp(\zeta)}{\pm\e} \\  \text{and}\quad 
\nabla\gpp(\zeta\pm)\cdot e_2&=\lim_{\e\searrow0}\frac{\gpp(\zeta\mp\e e_2)-\gpp(\zeta)}{\mp\e}.
\end{align*}
Concavity of $\gpp$ ensures  the  limits   exist.   $\nabla\gpp(\xi\pm)$ coincide (and equal $\nabla\gpp(\xi)$) 
 if and only if $\xi\in\Diff$.  
Furthermore,  on $\ri\Uset$, 
\begin{align}
\nabla\gpp(\zeta-)=\lim_{\xi\cdot e_1\nearrow\zeta\cdot e_1} \nabla\gpp(\xi\pm),
\quad%\text{and}\quad 
\nabla\gpp(\zeta+)=\lim_{\xi\cdot e_1\searrow\zeta\cdot e_1} \nabla\gpp(\xi\pm), \label{nabla-g-lim}
\end{align}
\begin{align}
\text{and}\quad\gpp(\zeta)= \nabla\gpp(\zeta\pm)\cdot\zeta.\label{euler}
\end{align}

For $\xi\in\ri\Uset$ define maximal  line segments on which $\gpp$ is linear,  $\Uset_{\xi-}$ for the left gradient at $\xi$ %=[\ximin, \xi]$ 
and $\Uset_{\xi+}$ for the right gradient at $\xi$,  %=[\xi, \ximax]$   
by 
	\begin{align}\label{eq:sector1}
\Uset_{\xi\pm}=\{\zeta\in\ri\Uset: 	\gpp(\zeta)-\gpp(\xi)=\nabla g(\xi\pm)\cdot(\zeta-\xi)\}.  
	\end{align}
Either or both segments can degenerate to a point. 
Let \be\label{eq:sector2} \Uset_\xi=\Uset_{\xi-}\cup\,\Uset_{\xi+}=[\ximin, \ximax]
\qquad\text{with $\ximin\cdot e_1\le \ximax\cdot e_1$.}
\ee
 %\textcolor{nicos-red}{(Is this the way we spring the $\ximin, \ximax$ to the reader? No alert? Not with a bang, but with a whisper?)} 
 If $\xi\in\Diff$ then $\Uset_{\xi+}=\Uset_{\xi-}=\Uset_\xi$, while if $\xi\notin\Diff$ then $\Uset_{\xi+}\cap\Uset_{\xi-}=\{\xi\}$.  If $\xi\in\EP$ then $\Uset_{\xi}= \{\xi\}$.     %Figure \ref{fig:defs} illustrates.
    Notations $\ximin$ and $\ximax$ can be iterated:    $\underline\ximin=\etamin$ for  $\eta=\ximin$ and  $\overline\ximax=\zetamax$ for  $\zeta=\ximax$.
  If $\ximin\in\Diff$ then 
  $\underline\ximin=\ximin$ and similarly for  $\overline\ximax$.      When needed we use the convention   $\Uset_{e_i}=\Uset_{e_i\pm}=\{e_i\}$, $i\in\{1,2\}$.
  
For $\zeta\cdot e_1<\eta\cdot e_1$ in $\ri\Uset$,  $[\zeta, \eta]$ is a     
  {\sl maximal linear segment} for  $\gpp$   if $\nabla\gpp$ exists and  is constant in $]\zeta, \eta[$  but not on any strictly larger open line segment in $\ri\Uset$.  
Then    $[\zeta, \eta]=\Uset_{\zeta+}=\Uset_{\eta-}=\Uset_\xi$ for any $\xi\in\;]\zeta, \eta[$.   If furthermore  $\zeta, \eta\in\Diff$ we say that $\gpp$ is differentiable at the endpoints of this maximal linear segment. This hypothesis appears  several times.    A linear segment  of $\gpp$  must lie in the interior $\ri\Uset$.   This is a consequence of Martin's  shape universality  on the boundary of $\R_+^2$  \cite[Theorem 2.4]{Mar-04} which states  that 
		\begin{align}
		\label{eq:g-asym}
			\gpp(1,s)=\E(\w_0)+2\sigma\sqrt{ s}+o(\sqrt{s}\,)  \quad \text{as} \;  s\searrow 0.  
		\end{align}
  $\gpp$ is {\sl strictly concave}  if   there is no nondegenerate line segment on $\ri\Uset$ on which $\gpp$ is   linear. 

Exposed points can be characterized as follows.  All points of $(\ri\Uset)\smallsetminus\Diff$ are exposed.  A point $\xi\in\Diff$ is exposed if and only if it does not lie in any closed linear segment of $\gpp$.

\smallskip	
	
\subsection{Geodesics}\label{subsec:geo} 	
%As the last part of this section we state results for geodesics. 

%\note{define geodesics $x_{m,n}$ and $x_{0,\infty}$. define leftmost and rightmost geodesics. define coalescence}

For $u\le v$ in $\Z^2$ an admissible path $x_{0,n}$ from  $x_0 = u$ to  $x_n=v$  (with $n=\abs{v-u}_1$)  
is   a (finite) {\sl geodesic} from $u$ to $v$ 
if 
	\[
		\Gpp_{u,v} = \sum_{k=0}^{n-1} \w_{x_k}.
	\]
An infinite up-right path $x_{0,\infty}=(x_k)_{0\le k<\infty}$ is a {\sl semi-infinite  geodesic} emanating from $u\in\Z^2$ if $x_0=u$ and for all $j>i\ge0$, $x_{i,j}$ is a geodesic between $x_i$ and $x_j$. 
%\textcolor{nicos-red}{(Maybe add: At this point it is clear that we have at least two directed infinite geodesics, the coordinate axis. I was asked at a board talk how we know that the set of infinite geodesics is not empty :-)}	
Two semi-infinite geodesics $x_{0, \infty}$ and $y_{0, \infty}$ {\sl coalesce} if there exist $m,n\in\Z_+$ such that  $x_{m+i}=y_{n+i}$ $\forall i\in\Z_+$. 
%$x_{m,\infty}=y_{n,\infty}$.

%If $\w_0$ has a continuous distribution then geodesics are unique for any choice of $u,v$, with probability $1$; otherwise
%it is possible that several paths from $u$ to $v$ are maximal. 

For $\xi\in\Uset$, a  geodesic $x_{0,\infty}$ is $\xi$-directed  or  a $\xi$-geodesic if $x_n/\abs{x_n}_1\to\xi$ as $n\to\infty$.    A       directed geodesic  is $\xi$-directed for some $\xi$.   Flat segments of $\gpp$  on $\Uset$ prevent us from asserting that all geodesics are directed.  Hence we say more generally for  a subset $\cV\subset\Uset$ that  a geodesic $x_{0,\infty}$ is $\cV$-directed %or a $\cV$-geodesic 
if all the limit points of 
$x_n/\abs{x_n}_1$ lie in $\mathcal V$.        Recall   the definition of $\Uset_{\xi\pm}$ from \eqref{eq:sector1}  and  $\Uset_{\xi}=\Uset_{\xi+}\cup\Uset_{\xi-}$.

\begin{theorem}\noindent\label{thm1}
\begin{enumerate}[\ \ \rm(i)]	
	\item\label{thm1:exist}   The following statements hold for $\P$-almost every $\w$.  For every $u\in\Z^2$ and  $\xi\in\Uset$ there exists at least one   semi-infinite $\Uset_{\xi+}$-directed geodesic  and at least one   semi-infinite $\Uset_{\xi-}$-directed geodesic starting from $u$.  Every 
	semi-infinite geodesic   is $\Uset_\xi$-directed for some $\xi\in\Uset$.
\item\label{thm1:direct}	If $\gpp$ is strictly concave   then,  	
		% If $\ri\Uset=\EP\cup\EPlim$ then, 
		$\P$-almost surely, every semi-infinite geodesic is directed.
\item\label{thm1:cont}Suppose  $\P\{\w_0\le r\}$ is a continuous function of $r\in\R$.
% and $\gpp$ is %strictly concave and 		differentiable on $\ri\Uset$.   
Fix $\xi\in\ri\Uset$ with  $\Uset_\xi=[\,\ximin,\ximax\,]$ satisfying $\ximin, \xi,\ximax\in\Diff$.  Then   
		  $\P$-almost surely  there is a unique $\Uset_\xi$-directed  semi-infinite  geodesic out of every $u\in\Z^2$  and all these geodesics coalesce.  
		  For each $u\in\Z^2$ there are at most finitely many sites $v\in\Z^2$ such that the unique $\Uset_\xi$-directed  semi-infinite geodesic out of $v$ goes through $u$.
\end{enumerate}
\end{theorem}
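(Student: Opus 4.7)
The plan is to reduce all three parts to properties of the cocycle geodesics built from Section~\ref{sec:cocycles}. Recall that to each direction $\xi \in \ri\Uset$ and each sign $\pm$ is associated a stationary integrable cocycle $B^{\xi\pm}(x,y)$ with mean $\E[B^{\xi\pm}(0,e_i)] = -\nabla\gpp(\xi\pm)\cdot e_i$ satisfying a recovery identity relating it to the weights $\w$. A \emph{cocycle geodesic} for $B^{\xi\pm}$ out of $u$ is an up-right path that at each vertex $x$ chooses the step minimizing $B^{\xi\pm}(x,x+e_i)$. The recovery identity forces any such path to be a last-passage geodesic between any two of its vertices, and an application of the cocycle ergodic theorem from Appendix~\ref{app} along the path pins its asymptotic direction inside the linear segment $\Uset_{\xi\pm}$.

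For part~(\ref{thm1:exist}) I would first realize a countable dense family of cocycles $\{B^{\xi\pm}\}$ indexed by $\xi$ in a dense subset of $\ri\Uset$ on a single full-measure event, together with the cocycle geodesics from every lattice point. Existence of at least one $\Uset_{\xi\pm}$-directed geodesic from $u$ for arbitrary $\xi$ follows by monotone sandwiching between cocycle geodesics for nearby rational directions. For the second claim, an arbitrary semi-infinite geodesic $y_{0,\infty}$ can only cross the rational cocycle geodesics finitely often, by the planar crossing principle for up-right paths sharing a common vertex, so its limit points in $\Uset$ are confined to a single $\Uset_\xi$. Part~(\ref{thm1:direct}) is then immediate because strict concavity forces $\Uset_\xi = \{\xi\}$ for every $\xi \in \ri\Uset$.

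For part~(\ref{thm1:cont}) the continuity of $\P\{\w_0 \le r\}$ rules out ties in finite geodesics and gives continuous marginals for $B^{\xi\pm}$, so cocycle geodesics are almost surely uniquely defined. The hypothesis $\ximin,\xi,\ximax \in \Diff$ forces $B^{\xi-}=B^{\xi+}$ in the relevant sense, so the $\ximin$- and $\ximax$-cocycle geodesics out of a common starting point coincide; any $\Uset_\xi$-directed geodesic from $u$ is sandwiched between two copies of this single path and must equal it, yielding uniqueness. Coalescence of these cocycle geodesics is obtained by adapting the Licea--Newman argument: assuming non-coalescence of two cocycle geodesics from neighbouring points, translation invariance produces a positive-density family of pairwise disjoint semi-infinite geodesics in a large strip, which contradicts a counting bound derived from the cocycle ergodic theorem combined with the common asymptotic direction. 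Once coalescence is in hand, the set $\{v : \text{the $\Uset_\xi$-geodesic from } v \text{ passes through } u\}$ forms a finite subtree rooted at $u$ in the reverse direction, because an infinite such subtree would yield a doubly infinite $\Uset_\xi$-directed geodesic, ruled out by coalescence and a planar duality argument between the $\ximin$- and $\ximax$-cocycle trees.

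The main obstacle I anticipate is the Licea--Newman-type coalescence step. The classical version is undirected and relies on self-duality; transplanting it to the directed corner growth setting with possibly nondegenerate $\Uset_\xi$ requires the differentiability of $\gpp$ at both endpoints $\ximin$ and $\ximax$ precisely to ensure that the two boundary cocycles agree and that the flux balance across a large rectangle closes. Ruling out doubly infinite $\Uset_\xi$-directed geodesics and transferring uniqueness and finiteness from cocycle geodesics to all geodesics are then comparatively routine consequences of the sandwich and the monotonicity of cocycles in $\xi$.
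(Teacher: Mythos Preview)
Your overall strategy is essentially that of the paper: build cocycle geodesics, use directedness of cocycle geodesics plus a sandwiching lemma to handle parts~(\ref{thm1:exist}) and~(\ref{thm1:direct}), and for part~(\ref{thm1:cont}) collapse the sandwich via the differentiability hypothesis and the no-ties lemma, then invoke a Licea--Newman adaptation for coalescence. That much is right.

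Two points deserve correction, however. First, you misplace the role of the hypothesis $\ximin,\xi,\ximax\in\Diff$. Coalescence of the cocycle geodesics $\{x^{u,\t,\xi,\pm}_{0,\infty}:u\in\Z^2\}$ holds for \emph{every} fixed $\xi\in\ri\Uset$ and stationary tie-breaker $\t$, with no regularity on $\gpp$ at all; the proof uses only stationarity, ergodicity, and the independence structure of Theorem~\ref{th:cocycles}\eqref{th:cocycles:indep}. Differentiability at $\ximin,\ximax$ is used solely to make $B^{\ximin}_-=B^\xi=B^{\ximax}_+$, so that the two sides of the sandwich agree and uniqueness follows. There is no ``flux balance across a large rectangle'' in the argument. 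The Licea--Newman step you are missing is the \emph{local modification}: from non-coalescence one first extracts three disjoint geodesics, then modifies the weights near the starting point of the middle one so that this geodesic cannot be touched by any cocycle geodesic emanating from sufficiently far away; ergodicity then gives $\delta L^2$ such shielded geodesics in an $L\times L$ box, contradicting the $O(L)$ boundary exits. Without the surgery step your density claim does not follow.

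Second, your sketch for the finite-ancestor statement is off. Under the hypotheses of part~(\ref{thm1:cont}) the cocycles $B^{\ximin}$ and $B^{\ximax}$ coincide, so there is no pair of distinct trees between which to run a ``planar duality argument.'' The paper instead argues directly on the single coalescing tree: if $\{v:x^{v,\xi}_{0,\infty}\ \text{passes through}\ u\}$ were infinite with positive probability, ergodicity would give a positive density of \emph{junction points} (points whose removal disconnects the tree into three infinite pieces); a binary-tree leaf count then shows there can be at most $O(L)$ junction points in $[1,L]^2$, a contradiction. Your K\"onig-lemma reduction to a bi-infinite geodesic is a legitimate first move, but ruling out bi-infinite cocycle geodesics still requires exactly this lack-of-space argument, not duality.
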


Under the hypotheses of part (iii) we have   $\Uset_{\xi\pm}=\Uset_\xi$.  So there is no contradiction  between parts (i) and (iii).  

By   \eqref{eq:g-asym} there are infinitely many distinct sets $\Uset_{\xi\pm}$. 
%\textcolor{nicos-red}{(Add one line of justification?)}
Hence,  without any assumptions on 
the shape $\gpp$,  part \eqref{thm1:exist} implies the existence of infinitely many semi-infinite geodesics from each point $u\in\Z^2$. 
%Furthermore, by Theorem \ref{edge-diff} if $\w_0$ has a continuous distribution, then 
%the situation in \eqref{thm1:cont} happens for infinitely many segments $[\ximin,\ximax]$ (only at most countably of which are nondegenerate).
The second part of claim \eqref{thm1:cont} prevents the existence of doubly infinite geodesics $x_{-\infty,\infty}$ such that $x_{0,\infty}$ is $\Uset_\xi$-directed (a.s.\ in a fixed direction $\xi$).  This  is not true for all weight distributions (see Example \ref{ex:flat} below).  

 For  exponentially distributed $\w_0$ the  results of Theorem \ref{thm1} appeared earlier as follows.     Theorem \ref{thm1}\eqref{thm1:exist}--\eqref{thm1:direct} is covered by  Proposition 7 of \cite{Fer-Pim-05}.    Uniqueness and coalescence in  part \eqref{thm1:cont} are in   Theorem 1(3) of \cite{Cou-11}, combined with the coalescence proof of \cite{Lic-New-96} which was adapted to exponential LPP in Proposition 8  of   \cite{Fer-Pim-05}.   Nonexistence of doubly infinite geodesics is part of Lemma 2 of \cite{Pim-13-}.   

When the  distribution of  $\w_0$ is  not   continuous,  uniqueness of geodesics (Theorem \ref{thm1}\eqref{thm1:cont}) cannot hold. Then we can consider leftmost and rightmost geodesics.  The  {\sl leftmost} geodesic $\underline{x}_{\;\centerdot}$ (between two given points or in a given direction)  satisfies 
	%	\begin{align}\label{eq:ramona}
	$	\underline{x} _k \cdot e_1 \le x _k \cdot e_1 $ 
		% \text{(respectively }x _k \cdot e_1\le \overline{x} _k \cdot e_1\text{)} \qquad\text{for $k=0,\dotsc, n$,} 	\end{align}
for any geodesic $x_\centerdot$  of the same category.   The rightmost geodesic satisfies the opposite inequality.

\begin{theorem}\label{thm:lr-geod}
Fix   $\xi\in\ri\Uset$.  % with  $\Uset_\xi=[\ximin,\ximax]$ such that  $\ximin, \xi, \ximax\in\Diff$.   
%Then there exists an event  $\Omega_0$ with $\P(\Omega_0)=1$ such that  {\rm(i)} and {\rm(ii)} below   hold  for each 
%$\w\in\Omega_0$.  
The following hold almost surely.
\begin{enumerate}[\ \ \rm(i)]
\item\label{thm:lr-geod:i} 
Assume $\ximin$ is not the right endpoint of a linear segment of $\gpp$ {\rm(}equivalently, $\underline\ximin=\ximin${\rm)}. 
Then there exists a leftmost $\Uset_{\xi-}$-directed geodesic from each $u\in\Z^2$ and 
all these leftmost geodesics coalesce.  
		  For each $u\in\Z^2$ there are at most finitely many sites $v\in\Z^2$ such that the leftmost $\Uset_{\xi-}$-directed geodesic out of $v$ goes through $u$.
A similar statement holds for rightmost $\Uset_{\xi+}$-geodesics, under the assumption $\overline\ximax=\ximax$.
%If $\overline\ximax=\ximax$ then the same statement holds after replacing $\xi-$ with $\xi+$.
		%   $\P$-almost surely  there exists a  leftmost $\xi$-geodesic from each $u\in\Z^2$ and all these leftmost geodesics coalesce. 
\item\label{thm:lr-geod:ii}  Assume $\ximin,\xi,\ximax\in\Diff$. Then for any $u\in\Z^2$ and  sequence $v_n$ such that
\begin{align}\label{eq:vn}
\abs{v_n}_1\to\infty\quad\text{and}\quad\ximin\cdot e_1\le\varliminf_{n\to\infty}\frac{v_n\cdot e_1}{\abs{v_n}_1}\le\varlimsup_{n\to\infty}\frac{v_n\cdot e_1}{\abs{v_n}_1}\le\ximax\cdot e_1,
\end{align} 
%For any sequence $v_n$ as in \eqref{eq:vn} 
the leftmost geodesic from $u$ to $v_n$ converges to the unique  leftmost $\Uset_\xi$-directed geodesic  from $u$ given in part 
\eqref{thm:lr-geod:i}.   
A similar statement holds for  rightmost geodesics. 
\end{enumerate}
\end{theorem}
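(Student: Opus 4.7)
The plan is to base the proof on the family of cocycle geodesics introduced in Section~\ref{sec:geod} and on the stationary cocycles of Theorem~\ref{th:cocycles}. For every direction $\xi\in\ri\Uset$ the cocycle $B^{\xi\pm}$ defines a leftmost (resp.\ rightmost) cocycle geodesic $\pi^{L,\xi\pm}_u$ (resp.\ $\pi^{R,\xi\pm}_u$) from any $u\in\Z^2$ by repeatedly taking the step of maximal $B^{\xi\pm}$-increment, with ties broken by a fixed deterministic rule ($e_2$ for leftmost, $e_1$ for rightmost). All the geometric claims of the theorem will be transferred from cocycle geodesics to actual LPP geodesics.

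For part (\ref{thm:lr-geod:i}), I would proceed in three steps. First, using cocycle telescoping together with the ergodic theorem for cocycles in Appendix~\ref{app}, show that $\pi^{L,\xi-}_u$ is a semi-infinite geodesic whose limit points lie in $\Uset_{\xi-}$. Second, argue that $\pi^{L,\xi-}_u$ is leftmost among all $\Uset_{\xi-}$-directed geodesics from $u$: any competing $\Uset_{\xi-}$-directed geodesic can be compared with $\pi^{L,\xi-}_u$ through a pathwise cocycle inequality, and the assumption $\underline\ximin=\ximin$ rules out a flatter neighboring cocycle that could otherwise generate a more leftward geodesic with the same asymptotic sector. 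Third, adapt the Licea--Newman coalescence scheme intrinsically to cocycle geodesics: assuming two such geodesics fail to coalesce with positive probability, translation invariance of the joint law of $(\w,B^{\xi-})$ combined with an ergodic crossing-count argument on a horizontal strip produces too many disjoint semi-infinite up-right paths, a contradiction. The local-finiteness statement follows from a translation-invariance argument on the reverse tree: an infinite in-degree at $u$ would, by stationarity, occur at a positive density of sites, again contradicting the crossing-count bound.

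For part (\ref{thm:lr-geod:ii}), under $\ximin,\xi,\ximax\in\Diff$ the cocycles $B^{\ximin-},B^{\xi\pm},B^{\ximax+}$ coincide almost surely (they agree on the plateau $\Uset_\xi$ whenever $\gpp$ is differentiable at both endpoints), so they generate the same leftmost cocycle geodesic $\pi^{L,\xi}_u$ from part (\ref{thm:lr-geod:i}). To force the leftmost finite geodesic $\pi_n$ from $u$ to $v_n$ to converge to $\pi^{L,\xi}_u$, pick sequences $\zeta_k,\eta_k\in\Diff$ with $\zeta_k\cdot e_1\nearrow\ximin\cdot e_1$ and $\eta_k\cdot e_1\searrow\ximax\cdot e_1$. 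Planar monotonicity of leftmost geodesics in the endpoint (two up-right leftmost paths cannot cross without merging) places $\pi_n$ between the infinite cocycle geodesics $\pi^{L,\zeta_k-}_u$ and $\pi^{L,\eta_k+}_u$ for all $n$ sufficiently large, thanks to the directional assumption \eqref{eq:vn} on $v_n$. By the continuity of the cocycle in the direction parameter supplied by Theorem~\ref{th:cocycles}, both $\pi^{L,\zeta_k-}_u$ and $\pi^{L,\eta_k+}_u$ converge to $\pi^{L,\xi}_u$ as $k\to\infty$, and a diagonal choice $k=k(n)\to\infty$ closes the sandwich.

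The main obstacle will be the coalescence step in part (\ref{thm:lr-geod:i}). The cocycle used to define $\pi^{L,\xi-}_u$ is not in general an exact Busemann function, so the Licea--Newman modification argument must be carried out entirely within the cocycle framework; the stationarity of $B^{\xi-}$ under lattice translations and the hypothesis $\underline\ximin=\ximin$ are the key inputs that let the ergodic counting step go through, and verifying that the modification preserves the joint distribution of $(\w,B^{\xi-})$ is the delicate technical point on which the whole argument rests.
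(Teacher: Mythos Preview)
Your plan for part~\eqref{thm:lr-geod:i} is essentially the paper's: the candidate leftmost geodesic is a cocycle geodesic with tie-breaker $\tmin$, directedness comes from the cocycle ergodic theorem (Theorem~\ref{th:B-geod:direction}), the leftmost property from comparison with cocycle geodesics at directions approaching from the left (Lemma~\ref{lm:sandwich}\eqref{lm:sandwich:a}), coalescence from the Licea--Newman adaptation (Theorem~\ref{th:geod:coal:t}), and the finite-ancestor statement from the junction-point argument (Theorem~\ref{th:d-geod:t}). One detail: the paper's candidate is $x^{u,\tmin,\ximin,-}_{0,\infty}$, built from $B^{\ximin}_-$, not from $B^{\xi}_-$. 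When $\ximin\notin\Diff$ these cocycles differ, and it is the former that Lemma~\ref{lm:sandwich}\eqref{lm:sandwich:a} places weakly left of every $\Uset_{\xi-}$-directed geodesic.

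Your sandwich for part~\eqref{thm:lr-geod:ii} has a genuine gap. The statement does not assume $\w_0$ has continuous distribution, so ties $B^\xi(x,x+e_1)=B^\xi(x,x+e_2)$ can occur. At such a tie the target $x^{u,\tmin,\xi}$ takes an $e_2$-step, but by monotonicity~\eqref{eq:monotone} the approximating cocycle satisfies $B^{\eta_k}_+(x,x+e_1)\le B^{\eta_k}_+(x,x+e_2)$, possibly strictly, so your right barrier $\pi^{L,\eta_k+}_u=x^{u,\tmin,\eta_k,+}$ may take $e_1$ there. The continuity of cocycle geodesics from the right in Lemma~\ref{lm:cont geod} is proved only for $\tmax$, precisely because of this asymmetry; with $\tmin$ the limit need not be $x^{u,\tmin,\xi}$. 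If instead you use $x^{u,\tmax,\eta_k,+}$ on the right, the barrier is valid and does converge, but only to $x^{u,\tmax,\xi}$, so the sandwich closes merely to the interval between $x^{u,\tmin,\xi}$ and $x^{u,\tmax,\xi}$, which is not a single path when ties are present.

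The paper fills this gap by a different mechanism. It takes any subsequential limit $y_{0,\infty}$ of the leftmost finite geodesics and invokes Theorem~\ref{thm:buse-geo}\eqref{thm:buse-geo:iii} (Busemann convergence along finite geodesics) to get $\w_{y_i}=B^\xi(y_i,y_{i+1})$ for all $i$. Then it rules out an $e_1$-step at a $B^\xi$-tie by a coalescence contradiction: if $B^\xi(0,e_1)=B^\xi(0,e_2)$ but $y_1=e_1$, the $\tmax$-cocycle geodesic from $e_2$ coalesces with the one from $e_1$ at some $z$, and the concatenation through $e_2$ to $z$ has weight $B^\xi(0,z)\ge\Gpp_{0,z}$, contradicting that $y_{0,|z|_1}$ was the leftmost geodesic to $z$. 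This forces $y_{0,\infty}=x^{0,\tmin,\xi}_{0,\infty}$ with no room left by a sandwich.
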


%In fact, part \eqref{thm:lr-geod:i} holds more generally, e.g.\ when $\Uset_\xi=\{\xi\}$ without the differentiability assumption. The precise statement is in Corollary \ref{cor:leftmost}.
%Part  \eqref{thm:lr-geod:ii} needs the Busemanns and so we need differentiability
%even when $\Uset_\xi=\{\xi\}$. 
The convergence  statement  Theorem \ref{thm:lr-geod}\eqref{thm:lr-geod:ii} applies also to the case in Theorem \ref{thm1}\eqref{thm1:cont},  and in that   case there is just one unique $\Uset_\xi$-directed geodesic, not separate leftmost and rightmost geodesics.  
Theorems \ref{thm1} and \ref{thm:lr-geod} are proved in Section \ref{sec:geod}. In particular,  we give explicit local recipes in terms of a priori constructed  cocycles  for defining the geodesics whose existence is claimed in the theorems. 
%\textcolor{blue}{(F added this paragraph. OK?)}

\smallskip

\subsection{Busemann functions and Busemann geodesics} 
%Cocycles play a central role in this paper.   
%For the purpose of the next  general definition, let $\OBPbig$ denote a probability space on which a group of measurable bijections  $\{T_x\}_{x\in\Z^2}$ acts in a stationary way. 

%%%%%%%%   EARLIER DEFINITION WITH $\Ombig$   %%%%%%%%%%
%\begin{definition}[Cocycle]
%\label{def:cK}
%	A measurable function $B:\Ombig\times\Z^2\times\Z^2\to\R$ is   a {\rm stationary $L^1(\Pbig)$  cocycle}
%	if it satisfies the following three conditions. %\medskip
%		\begin{enumerate}[\ \ \rm(a)]
%			\item\label{def:cK:int} Integrability: for each $z\in\{e_1,e_2\}$, $\Ebig\abs{B(0,z)} <\infty$.
%			%\item[{\rm(b)}] Mean zero: for all each $z\in\range$, $\E[B(\what,z)]=0$.\medskip
%			\item\label{def:cK:stat} Stationarity:  for $\Pbig$-a.e.\ $\what$ and  all $x,y,z\in\Z^2$, 
%					$B(\what, z+x,z+y)=B(T_z\what, x,y)$.
%			\item\label{def:cK:coc} Cocycle property:   $\Pbig$-a.s.\ and for all $x,y,z\in\Z^2$, $B(x,y)+B(y,z)=B(x,z)$.
%			%any two paths $x_{0,N},\xtil_{0,N}\in\Pi_{0,x}$ for some $x\in\Z_+^2$ with $|x|=N$, we have
%			%\begin{align*}
%			%&\sum_{i=0}^{N-1} B(T_{x_i}\what,x_{i+1}-x_i)
%			%=\sum_{j=0}^{N-1} B(T_{\xtil_i}\what,\xtil_{i+1}-\xtil_i).
%			%\end{align*}
%		\end{enumerate}
%	The space  of cocycles is denoted by $\Coc(\Ombig)$.
%\end{definition}

%\medskip

By 
  \eqref{Gxy1}   the following identities hold  along any geodesic $x_{0,m}$ from $u$ to $v_n$: 
	%\be\label{geod-buse1} 
	\be\label{w-geod}\begin{aligned}
	\w_{x_i}&=\min\bigl(\Gpp_{x_i,v_n}-\Gpp_{x_i+e_1,v_n}\,,\,\Gpp_{x_i,v_n}-\Gpp_{x_i+e_2,v_n}\bigr)\\
	&=\Gpp_{x_i,v_n}-\Gpp_{x_{i+1},v_n},\quad \text{ for } \ \ \ 0\le i<m.
	\end{aligned} \ee
The  second equality in \eqref{w-geod} shows how to construct a finite geodesic ending at $v_n$. To study semi-infinite geodesics we take $v_n\to\infty$ in a particular direction.
Point-to-point {\sl  Busemann functions}  are limits  of gradients $\Gpp_{x, v_n}-\Gpp_{y,v_n}$.
% and, as such, they will be the main tool behind our results on geodesics. % as  $v_n\to\infty$ in a particular direction. 
The next  existence theorem  is Theorem~3.1 from  \cite{Geo-Ras-Sep-15a-}.

\begin{theorem}\label{thm:buse}  
 Fix two points $\zeta, \eta\in\Diff$ such that $\zeta\cdot e_1\le\eta\cdot e_1$.  Assume that  either 
 
 {\rm(i)}  $\zeta=\eta=\xi\in\EP$ in which case  $\zeta=\eta=\xi=\ximin= \ximax$,   or that 
 
 {\rm(ii)}  $[\zeta, \eta]$ is a maximal linear segment of $\gpp$ in which case  
 $[\zeta, \eta]=[\ximin, \ximax]$ for all $\xi\in[\zeta, \eta]$.  
 
Then there exist integrable random variables 
$\{B(x,y):x,y\in\Z^2\}$ and an event  $\Omega_0$ with $\P(\Omega_0)=1$ such that  the following holds for each 
$\w\in\Omega_0$.    For each sequence $v_n\in\Z_+^2$ such that 
%Let $v_n\in\Z_+^2$ be a sequence such that  
\begin{align}\label{eq:vn15}
\abs{v_n}_1\to\infty\quad\text{and}\quad\zeta\cdot e_1\le\varliminf_{n\to\infty}\frac{v_n\cdot e_1}{\abs{v_n}_1}\le\varlimsup_{n\to\infty}\frac{v_n\cdot e_1}{\abs{v_n}_1}\le\eta\cdot e_1,
\end{align}
we have the limit
\be \label{eq:grad:coc1} 
							B(\w, x,y) = \lim_{n\to \infty} \big( \Gpp_{x, v_n}(\w) - \Gpp_{y, v_n}(\w) \big)   \qquad  \text{ for  $x,y\in\Z^2$. } 
						\ee 
 The mean of the limit is given by 
\be\label{EB=Dg}     \nabla \gpp(\xi) = \bigl( \, \E[B(x,x+e_1)]\,,\,   \E[B(x,x+e_2)] \,\bigr)\quad\text{for all $\xi\in[\zeta, \eta]$}.
\ee 
\end{theorem}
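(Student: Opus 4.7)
The plan is to take $B(x,y) := B^\xi(x,y)$, where $B^\xi$ is the stationary $L^1$ cocycle constructed in Theorem~\ref{th:cocycles} of Section~\ref{sec:cocycles} for a direction $\xi \in [\zeta,\eta]$, and to identify this cocycle with the Busemann limit \eqref{eq:grad:coc1}. The candidate is well-defined because, under hypothesis (i), $\xi \in \EP$ with $\Uset_\xi = \{\xi\}$, so the two one-sided cocycles $B^{\xi\pm}$ coincide; under hypothesis (ii), $\nabla \gpp$ is constant on $[\zeta,\eta]$, and uniqueness of the recovery cocycle pinned down by its mean vector gives $B^{\zeta+} = B^{\xi\pm} = B^{\eta-}$ a.s.\ across the whole linear segment. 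In either case the mean formula \eqref{EB=Dg} is built into the cocycle construction.

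The core mechanism will be a stationary LPP sandwich. The recovery identity $\omega_x = \min(B(x,x+e_1), B(x,x+e_2))$ yields $\sum_k \omega_{x_k} \le \sum_k B(x_k,x_{k+1}) = B(x_0, x_n)$ by telescoping along any up-right path, hence the universal upper bound $\Gpp_{x,v_n} \le B(x, v_n)$. A matching lower bound, up to an additive boundary error, is obtained by following the \emph{cocycle geodesic} from $x$ (developed in Section~\ref{sec:geod}): at each step move along whichever direction $e_i$ attains the minimum in the recovery identity, so that realized weights exactly equal cocycle increments. Combining with additivity $B(x, v_n) = B(x,y) + B(y, v_n)$ produces the algebraic identity
\begin{equation*}
\Gpp_{x, v_n} - \Gpp_{y, v_n} - B(x, y) = \bigl(\Gpp_{x, v_n} - B(x, v_n)\bigr) - \bigl(\Gpp_{y, v_n} - B(y, v_n)\bigr),
\end{equation*}
in which both parenthesized terms are nonpositive.

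The first-order ($o(|v_n|_1)$) vanishing of each parenthesized term follows from the shape theorem \eqref{lln5} and the cocycle ergodic theorem of Appendix~\ref{app} combined with Euler's relation \eqref{euler}, which together give $|v_n|_1^{-1}\Gpp_{x,v_n} \to \gpp(\xi)$ and $|v_n|_1^{-1} B(x, v_n) \to \nabla\gpp(\xi) \cdot \xi = \gpp(\xi)$ along any sequence with $v_n/|v_n|_1 \to \xi$. Upgrading from $o(|v_n|_1)$ to a true almost sure limit of zero is done by a Licea--Newman style coalescence argument adapted to cocycle geodesics: cocycle geodesics from $x$ and $y$ coalesce a.s., and past the coalescence point the two error terms are literally identical, so their difference equals $B(x,y)$ on the nose.

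The hard part will be handling sequences $v_n$ for which $v_n/|v_n|_1$ does not converge but only has limit points inside $[\zeta\cdot e_1, \eta\cdot e_1]$. This is precisely where hypotheses (i) and (ii) do the heavy lifting: in case (i), exposure forces $v_n/|v_n|_1 \to \xi$ along any sequence satisfying \eqref{eq:vn15}, reducing the problem to a single direction; in case (ii), the invariance of $B^\xi$ as $\xi$ ranges over the segment makes the analysis insensitive to drift of $v_n/|v_n|_1$ within $[\zeta,\eta]$, and a sandwich between the monotone family of cocycle geodesics indexed by $\zeta$ and $\eta$ traps the LPP maximizer and forces the limit to hold uniformly over the admissible directional cone.
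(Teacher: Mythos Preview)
The paper does not give an independent argument for this theorem: it is recorded in Remark~\ref{rm:cc}(i) as an immediate consequence of Theorem~\ref{th:construction}, quoted verbatim from the companion paper~\cite{Geo-Ras-Sep-15a-}. That result already supplies, for every fixed $\xi\in\ri\Uset$ and every sequence $v_n$ satisfying~\eqref{eq:vn66}, the two-sided bound
\[
B^{\ximax}_{+}(x,x+e_i)\ \text{and}\ B^{\ximin}_{-}(x,x+e_i)
\]
sandwiching $\varliminf$ and $\varlimsup$ of $\Gpp_{x,v_n}-\Gpp_{x+e_i,v_n}$. Under either hypothesis (i) or (ii) one has $\ximin,\xi,\ximax\in\Diff$, so by Theorem~\ref{th:cocycles}\eqref{th:cocycles:flat} the bounding cocycles coincide and the sandwich collapses to the limit~\eqref{eq:grad:coc1}. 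Your first paragraph correctly isolates this cocycle collapse and the source of~\eqref{EB=Dg}; but the remainder of your outline is an attempt to \emph{reprove} Theorem~\ref{th:construction} rather than invoke it.

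That attempt has a genuine gap at the coalescence step. You assert that once the cocycle geodesics from $x$ and $y$ merge at some $z$, ``the two error terms are literally identical.'' They are not: the error $\Gpp_{x,v_n}-B(x,v_n)$ depends on the finite LPP geodesic from $x$ to the arbitrary terminal $v_n$, which has no a~priori reason to pass through $z$. Coalescence together with Lemma~\ref{lm:grad flow}\eqref{lm:grad flow:a} gives $B(x,v_n)-B(y,v_n)=\Gpp_{x,z}-\Gpp_{y,z}$, but the matching identity $\Gpp_{x,v_n}-\Gpp_{y,v_n}=\Gpp_{x,z}-\Gpp_{y,z}$ would require the finite geodesics $x\to v_n$ and $y\to v_n$ to route through $z$. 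That is exactly the content of Theorem~\ref{thm:buse-geo}\eqref{thm:buse-geo:iii}, whose proof in Section~\ref{sec:geod} opens by invoking Theorem~\ref{th:construction} to identify $B$ with $B^\xi$. So your route is circular. One can try to salvage it by the superadditivity bound $\Gpp_{x,v_n}\ge\Gpp_{x,z}+\Gpp_{z,v_n}$ (valid once $z\le v_n$), which yields
\[
\Gpp_{z,v_n}-B(z,v_n)\ \le\ \Gpp_{x,v_n}-B(x,v_n)\ \le\ 0
\]
and the analogue for $y$; but the common lower barrier $\Gpp_{z,v_n}-B(z,v_n)$ is only $o(\abs{v_n}_1)$, not $o(1)$, so the sandwich does not close. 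The actual proof of the bounds~\eqref{bu:G1}--\eqref{bu:G2} in~\cite{Geo-Ras-Sep-15a-} proceeds instead through stationary LPP with cocycle boundary conditions and the planar gradient monotonicity of Lemma~\ref{lm:new:comp0}, a mechanism your outline does not invoke.
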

 
In particular,   suppose  $\xi$ is an exposed point of differentiability of $\gpp$,  or $\xi$ lies on a maximal linear segment of $\gpp$ whose endpoints are points of differentiability.  Then a Busemann function $B^\xi$ exists in direction $\xi$ in the sense that   $B^\xi(\w,x,y)$ equals  the limit  in \eqref{eq:grad:coc1}   for any sequence $v_n/\abs{v_n}_1\to\xi$ with $\abs{v_n}_1\to\infty$.    Furthermore, the $B^\xi$'s match on  maximal linear segments of $\gpp$ with endpoints in $\Diff$.

  Limit \eqref{eq:grad:coc1} applied to  \eqref{w-geod} gives 
\be\label{var151}\w_{x_i}=  \min_{j\in\{1,2\}} B(\w,x_i,x_i+e_j)=B(\w,x_i,x_{i+1})\quad\P\text{-a.s.}\ee % = -\xi\cdot h(B^\xi).  
The  second equality shows how to construct semi-infinite  geodesics from a Busemann function. Such geodesics will be called  {\sl Busemann geodesics}.
%The choice of $i\in\{1,2\}$ in \eqref{var151} must depend on $\w$. This choice  is determined if   $\xi$  is not the asymptotic direction 
% of the competition interface (see Remark \ref{rem:cif-var} below).  
The next theorem   shows that in a direction that satisfies the differentiability assumptions that ensure existence of Busemann functions,  all geodesics  are Busemann geodesics.   
 
\begin{theorem}\label{thm:buse-geo}
Fix   $\xi\in\ri\Uset$ with  $\Uset_\xi=[\ximin,\ximax]$ such that  $\ximin, \xi, \ximax\in\Diff$.    Let $B$ be the limit from \eqref{eq:grad:coc1}. Then there exists  an event  $\Omega_0$ with $\P(\Omega_0)=1$ and such that  statements \eqref{thm:buse-geo:i}--\eqref{thm:buse-geo:iii} below  hold for each 
$\w\in\Omega_0$.  
\begin{enumerate}[\ \ \rm(i)]
\item\label{thm:buse-geo:i} Every up-right path $x_{0,\infty}$ such that $\w_{x_k}=B(x_k,x_{k+1})$ for all $k\ge0$
is a semi-infinite geodesic. We call such a path a Busemann geodesic for $B$.
\item\label{thm:buse-geo:ii} Every semi-infinite geodesic $x_{0,\infty}$ that satisfies 
	\be\label{geod-98} \ximin\cdot e_1\le\varliminf_{n\to\infty}\frac{x_n\cdot e_1}n\le\varlimsup_{n\to\infty}\frac{x_n\cdot e_1}n\le\ximax\cdot e_1\ee
 is a Busemann geodesic for $B$.
\item\label{thm:buse-geo:iii}   Let $v_n$ be a sequence that satisfies \eqref{eq:vn}.  Let $m\in\N$.  Then $\exists n_0\in\N$ such that if $n\ge n_0$, then every geodesic $x_{0,\abs{v_n}_1}$ from  $x_0=0$ to $v_n$ satisfies  $B(\w,x_i,x_{i+1})=\w_{x_i}$ for all $i=0,1, \dotsc, m$. 
\end{enumerate}
\end{theorem}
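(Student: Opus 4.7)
The plan hinges on two facts: the cocycle identity $B(x,z)=B(x,y)+B(y,z)$, which is immediate from the limit \eqref{eq:grad:coc1} by telescoping, and the identity \eqref{var151}, which in particular yields $\w_x\le B(x,x+e_j)$ for both $j=1,2$. Part \eqref{thm:buse-geo:i} then follows by a short variational estimate. For any admissible finite path $y_{0,n}$, the cocycle property gives $\sum_{k=0}^{n-1}B(y_k,y_{k+1})=B(y_0,y_n)$, so $\sum_{k=0}^{n-1}\w_{y_k}\le B(y_0,y_n)$. Along a Busemann geodesic $x_{0,\infty}$ for $B$ every inequality is an equality, hence $\sum_{k=0}^{n-1}\w_{x_k}=B(x_0,x_n)$. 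Comparing the two expressions at matched endpoints shows that $x_{0,n}$ is a finite geodesic for every $n$, so $x_{0,\infty}$ is a semi-infinite geodesic.

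For part \eqref{thm:buse-geo:ii} I would apply \eqref{w-geod} to the finite sub-geodesic $x_{0,n}$ of $x_{0,\infty}$, obtaining
$$\w_{x_i}=\Gpp_{x_i,x_n}-\Gpp_{x_{i+1},x_n}\qquad\text{for every } i\ge 0\text{ and } n>i.$$
Hypothesis \eqref{geod-98} on the sequence $x_n$ is exactly the sandwich condition \eqref{eq:vn15} with $v_n=x_n$, $\zeta=\ximin$, $\eta=\ximax$, so Theorem \ref{thm:buse} makes the right-hand side converge to $B(x_i,x_{i+1})$ as $n\to\infty$. The left-hand side is independent of $n$, so $\w_{x_i}=B(x_i,x_{i+1})$ for every $i$, identifying $x_{0,\infty}$ as a Busemann geodesic for $B$.

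For part \eqref{thm:buse-geo:iii} I would argue by contradiction. If the conclusion fails, there exist $m\in\N$, a subsequence $n_k\to\infty$, geodesics $x^{(k)}_{0,\,\abs{v_{n_k}}_1}$ from $0$ to $v_{n_k}$, and indices $i_k\in\{0,\dots,m\}$ with $\w_{x^{(k)}_{i_k}}<B(x^{(k)}_{i_k},x^{(k)}_{i_k+1})$, the inequality being strict by \eqref{var151}. Since the initial segment $(x^{(k)}_0,\dots,x^{(k)}_m)$ takes values in the finite set of admissible length-$m$ paths from $0$ and $i_k\in\{0,\dots,m\}$, a further subsequence can be chosen on which both this initial segment and the bad index are constant, say equal to $(x_0,\dots,x_m)$ and $i$. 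Along this sub-subsequence \eqref{w-geod} gives $\w_{x_i}=\Gpp_{x_i,v_{n_k}}-\Gpp_{x_{i+1},v_{n_k}}$, and since $v_{n_k}$ still satisfies \eqref{eq:vn}, Theorem \ref{thm:buse} forces the right-hand side to $B(x_i,x_{i+1})$, contradicting the strict inequality. The main obstacle, such as it is, is bookkeeping: ensuring that a single full-measure event $\Omega_0$ supports the convergence \eqref{eq:grad:coc1} simultaneously for the random sequence $x_n$ in \eqref{thm:buse-geo:ii} and for the random sub-subsequences extracted in \eqref{thm:buse-geo:iii}. This uniformity over all sequences satisfying \eqref{eq:vn15} is precisely what Theorem \ref{thm:buse} delivers, which makes the three arguments go through cleanly on a common event.
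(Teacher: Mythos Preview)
Your proof is correct. Parts \eqref{thm:buse-geo:i} and \eqref{thm:buse-geo:iii} match the paper's arguments: \eqref{thm:buse-geo:i} is the content of Lemma~\ref{lm:grad flow}\eqref{lm:grad flow:a}, and for \eqref{thm:buse-geo:iii} the paper gives a direct case split on the sign of $B^\xi(0,e_1)-B^\xi(0,e_2)$ rather than your subsequence-contradiction, but the mechanism---convergence \eqref{eq:grad:coc1} forcing the first step---is identical.

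Part \eqref{thm:buse-geo:ii} is where you take a genuinely simpler route. The paper does \emph{not} feed the geodesic $x_{0,\infty}$ itself into Theorem~\ref{thm:buse}. Instead it works on the extended space $\Ombig$, chooses auxiliary deterministic directions $\eta,\zeta$ strictly outside $[\ximin,\ximax]$, and runs a crossing argument: if $x_1=e_1$, then for large $m$ any geodesic from $0$ to $\fl{m\zeta}$ through $e_2$ must meet $x_{0,\infty}$, which forces $\Gpp_{e_1,\fl{m\zeta}}\ge\Gpp_{e_2,\fl{m\zeta}}$; sending $m\to\infty$ via Theorem~\ref{th:construction} and then $\zeta\to\ximax$ via cocycle continuity \eqref{eq:cont} yields $B^\xi(0,e_1)\le B^\xi(0,e_2)$, hence $\w_0=B^\xi(0,x_1)$. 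Your observation that one can simply take $v_n=x_n$ in Theorem~\ref{thm:buse}---legitimate precisely because the limit there is asserted uniformly over all sequences satisfying \eqref{eq:vn15} on one full-measure event---sidesteps the crossing argument, the auxiliary directions, the use of Theorem~\ref{th:construction}, and the continuity of cocycles. What the paper's route offers is a demonstration of how the cocycle machinery (which is valid even without the differentiability assumption $\ximin,\ximax\in\Diff$) controls geodesics; your route buys brevity and stays entirely on the base space $\Omega$.
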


Note in particular that the unique geodesics discussed in  Theorem \ref{thm1}\eqref{thm1:cont} and Theorem \ref{thm:lr-geod}\eqref{thm:lr-geod:ii}  are Busemann geodesics.  
This theorem is proved in Section \ref{sec:geod}.

\begin{example}[Flat edge in the percolation cone]   \label{ex:flat}  Assume \eqref{2d-ass} and furthermore that $\w_0\le 1$ and  $\opc<\P\{\w_0=1\}<1$ where $\opc$ is the critical probability of oriented site percolation  on $\Z^2$ (see Section~3.2 of \cite{Geo-Ras-Sep-15a-} for more detail about this setting).    Then $\gpp$ has  a nondegenerate, symmetric  linear segment   $[\etamin, \etamax]$ such that $\etamin, \etamax\in\Diff$  
% Then there exist symmetric points $\etamin \ne\etamax$  in $\Diff$ such that $\gpp(\xi)=1$ if and only if $\xi\in [\etamin, \etamax]$ %(This summarizes   undirected  first-passage percolation in articles  
 \cite{Auf-Dam-13,Dur-Lig-81,Mar-02}.    According to Theorems \ref{thm:lr-geod} and  \ref{thm:buse-geo},  from any point $u\in\Z^2$ there exist unique leftmost  and rightmost  semi-infinite geodesics directed into the segment  $[\etamin, \etamax]$, these geodesics are Busemann geodesics, and finite leftmost and rightmost geodesics converge to these Busemann geodesics.  
 
 Note also, in relation to Theorem \ref{thm1}\eqref{thm1:cont}, that a doubly infinite geodesic through the origin  with $\w_{x_k}\equiv 1$, directed into $[\etamin, \etamax]$,  can be constructed with positive probability  by joining together a percolating path in the first quadrant with one in the   third quadrant.     

\end{example}

\smallskip

\subsection{Competition interface}  \label{sec:ci} 

For this subsection assume that  $\P\{\w_0\le r\}$ is a continuous function of $r\in\R$.  Then  with probability one  no two finite paths of any lengths have equal weight.  Consequently for  any $v\in\Z_+^2$ there is a unique finite geodesic between $0$ and $v$.  Together these finite geodesics form the {\sl geodesic tree} $\cT_0$ rooted at $0$  that spans $\Z_+^2$.   The two subtrees %$\cT_{0,e_1}$ and $\cT_{0,e_2}$  
rooted at $e_1$ and $e_2$ are separated by an up-right path $\varphi=(\varphi_k)_{k\ge 0}$  on the dual  lattice $(\frac12,\frac12)+\Z_+^2$ with $\varphi_0=(\frac12,\frac12)$.    The path $\varphi$ is called the {\sl competition interface}.  
The term  comes from the interpretation that the subtrees 
 are two competing   infections 
on the lattice \cite{Fer-Mar-Pim-09,Fer-Pim-05}.  See Figure \ref{cif-fig}.  

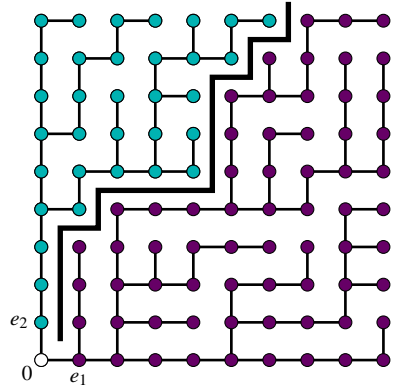
\begin{figure}[h]
\begin{center}

\begin{tikzpicture}[>=latex,scale=0.5]
%\draw[->] (0,0)--(10,0);
%\draw[->] (0,0)--(0,10);

\definecolor{sussexg}{rgb}{0,0.7,0.7}
\definecolor{sussexp}{rgb}{0.4,0,0.4}

\draw(-0.37,-0.35)node{$0$};
\draw(-0.57,1)node{$e_2$};
\draw(1,-0.5)node{$e_1$};

% TREE

\draw[line width=1pt](9,0)--(0,0)--(0,9);     %width of tree lines
\draw[line width=1pt](2,0)--(2,4)--(5,4)--(5,7)--(7,7)--(7,9)--(9,9);
\draw[line width=1pt](1,0)--(1,3);
\draw[line width=1pt](0,4)--(1,4)--(1,5)--(2,5)--(2,7);
\draw[line width=1pt](2,5)--(4,5)--(4,6);
\draw[line width=1pt](3,5)--(3,6)--(3,8)--(5,8)--(5,9)--(6,9);
\draw[line width=1pt](3,7)--(4,7);
\draw[line width=1pt](6,7)--(6,8);
\draw[line width=1pt](0,6)--(1,6)--(1,8)--(2,8)--(2,9)--(3,9);
\draw[line width=1pt](4,8)--(4,9);
\draw[line width=1pt](0,9)--(1,9);
\draw[line width=1pt](5,4)--(7,4)--(7,5)--(9,5)--(9,8);
\draw[line width=1pt](6,4)--(6,6)--(7,6);
\draw[line width=1pt](8,5)--(8,8);
\draw[line width=1pt](2,1)--(8,1)--(8,2)--(9,2);
\draw[line width=1pt](2,2)--(4,2)--(4,3)--(6,3);
\draw[line width=1pt](9,0)--(9,1);

%copyright purposes
\draw[line width=1pt](5,0)--(5,1);
\draw[line width=2pt,color=white](4,1)--(5,1);

\draw[line width=1pt](5,1)--(5,2)--(7,2)--(7,3);
\draw[line width=1pt](8,2)--(8,4)--(9,4);
\draw[line width=1pt](3,2)--(3,3);
\draw[line width=1pt](8,3)--(9,3);

%%% Competition interface

\draw[line width=2.0pt](.5,.5)--(0.5,3.5)--(1.5,3.5)--(1.5, 4.5)--(4.5,4.5)--(4.5,7.5)--(5.5,7.5)--(5.5,8.5)--(6.5,8.5)--(6.5, 9.5);   %width of comp int
%\draw[line width=3.5pt, densely dotted](.5,.5)--(0.5,3.5)--(1.5,3.5)--(1.5, 4.5)--(4.5,4.5)--(4.5,7.5)--(5.5,7.5)--(5.5,8.5)--(6.5,8.5)--(6.5, 9.5);    %style of comp int line
%\draw[line width=3.5pt, dashed](.5,.5)--(0.5,3.5)--(1.5,3.5)--(1.5, 4.5)--(4.5,4.5)--(4.5,7.5)--(5.5,7.5)--(5.5,8.5)--(6.5,8.5)--(6.5, 9.5);
\foreach \x in {0,...,9}{
             \foreach \y in {0,...,9}{
\draw[ fill =sussexp](\x,\y)circle(1.7mm);    %size of bullet
}}

\foreach \x in{0,...,6}{
		\draw[ fill =sussexg](\x, 9)circle(1.7mm);    %size of bullet		
				}

\foreach \x in{0,...,5}{
		\draw[ fill =sussexg](\x, 8)circle(1.7mm);    %size of bullet		
				}

\foreach \x in{0,...,4}{
		\foreach \y in {5,6,7}{
		\draw[fill =sussexg](\x, \y)circle(1.7mm);    %size of bullet		
				}}

\foreach \x in{0,...,1}{
		\draw[ fill =sussexg](\x, 4)circle(1.7mm);    %size of bullet		
				}

\foreach \x in{0,...,0}{
		\foreach \y in {1,2,3}{
		\draw[fill =sussexg](\x, \y)circle(1.7mm);    %size of bullet		
				}}

\draw[fill=white](0,0)circle(1.7mm); 

\end{tikzpicture}

\end{center}
\caption{\small   The geodesic tree $\mathcal{T}_0$ rooted at $0$.  
The competition interface  (solid line)  emanates from $(\tfrac12, \tfrac12)$ and separates 
the   subtrees of $\mathcal{T}_0$ rooted at $e_1$   and $e_2$.}
\label{cif-fig}\end{figure}

 Adopt  the convention that $\Gpp_{e_i, ne_j}=-\infty$ for $i\ne j$ and $n\ge 0$ (there is  no admissible  path from $e_i$ to $ ne_j$).    Fix $n\in\N$.   As $v$ moves to the right with  $\abs{v}_1=n$ fixed,  the function $\Gpp_{e_2,v}-\Gpp_{e_1,v}$ is nonincreasing (Lemma \ref{lm:new:comp0}  in the appendix). Then   $\varphi_{n-1}=(k+\tfrac12,n-k-\tfrac12)$ for the  unique $0\le k<n$ such that   
 \be\label{ci:19} \Gpp_{e_2,(k,n-k)}-\Gpp_{e_1,(k,n-k)}>0> \Gpp_{e_2,(k+1,n-k-1)}-\Gpp_{e_1,(k+1,n-k-1)}. \ee  
 
 \begin{theorem}\label{thm:ci-1} 
Assume $\P\{\w_0\le r\}$ is continuous in $r$ and that $\gpp$ is differentiable at the endpoints of all   its linear segments.
 Then we have the law of large numbers  
\begin{align}\label{ci-lln2}
\cid(\w)= \lim_{n\to\infty} n^{-1}\varphi_n(\w)\qquad \text{$\P$-a.s.}   \end{align}
The   limit $\cid$ is almost surely an exposed point in $\ri\Uset$  {\rm(}recall definition \eqref{eq:epod}{\rm)}.  For any $\xi\in\ri\Uset$,   $\P(\cid=\xi)>0$ if and only if $\xi\in(\ri\Uset)\smallsetminus\Diff$.    Any open interval outside  the closed  linear segments of   $\gpp$  contains $\cid$ with positive probability.  
%and the    support  of its distribution intersects every open interval outside  the closed  line segments on which $\gpp$ is linear.    
\end{theorem}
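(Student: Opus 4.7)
The plan is to sandwich $\alpha_n := \varphi_n\cdot e_1 - \tfrac12$ using Busemann functions. By \eqref{ci:19} and the monotonicity of $k\mapsto \Gpp_{e_2,(k,n-k)} - \Gpp_{e_1,(k,n-k)}$ (Lemma \ref{lm:new:comp0}), $\alpha_n$ is the largest $k$ with $\Gpp_{e_2,(k,n-k)} > \Gpp_{e_1,(k,n-k)}$. The hypothesis that $\gpp$ is differentiable at the endpoints of all its linear segments makes Theorem \ref{thm:buse} applicable at every $\xi\in\Diff\cap\ri\Uset$, yielding
\[
\lim_{n\to\infty}\bigl(\Gpp_{e_2,\fl{n\xi}} - \Gpp_{e_1,\fl{n\xi}}\bigr) = B^\xi(0,e_1) - B^\xi(0,e_2)\quad\text{a.s.}
\]
Thus on $\{B^\xi(0,e_1) > B^\xi(0,e_2)\}$, eventually $\alpha_n \ge \fl{n\xi\cdot e_1}$, and on the complementary event eventually $\alpha_n < \fl{n\xi\cdot e_1}$. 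Since $\P\{\w_0\le r\}$ is continuous, the ties $B^\xi(0,e_1) = B^\xi(0,e_2)$ occur with probability zero for each fixed $\xi\in\Diff$.

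To produce a pathwise limit, I would invoke the cocycles $B^{\xi\pm}$ from Section \ref{sec:cocycles} (matching $B^\xi$ at $\xi\in\Diff$), which are jointly realized on one probability space with pathwise monotonicity of $\xi\mapsto B^\xi(0,e_1) - B^\xi(0,e_2)$ in $\xi\cdot e_1$. By \eqref{EB=Dg} their mean is $\nabla\gpp(\xi)\cdot(e_1-e_2)$, which by \eqref{eq:g-asym} and concavity of $\gpp$ decreases from $+\infty$ near $e_2$ to $-\infty$ near $e_1$. The pathwise monotonicity therefore gives a well-defined random direction $\cid\in\ri\Uset$ where the cocycle sign flips, and the sandwich applied to a countable dense subset of $\Diff$ yields $\alpha_n/n\to\cid\cdot e_1$ almost surely. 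If $[\ximin,\ximax]$ is a nondegenerate closed linear segment of $\gpp$, the hypothesis forces $\ximin,\ximax\in\Diff$ and Theorem \ref{thm:buse} gives a common Busemann function $B$ throughout $[\ximin,\ximax]$; the sign of $B(0,e_1) - B(0,e_2)$ is then constant on this segment and $\cid$ cannot land in its open interior. Combined with $\cid\in\ri\Uset$ this puts $\cid$ on the exposed set.

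For the atom structure: at $\xi\in\Diff$, $\{\cid = \xi\}\subseteq\{B^\xi(0,e_1) = B^\xi(0,e_2)\}$, which has probability zero by continuity of the weight distribution; at $\xi\in(\ri\Uset)\setminus\Diff$, the one-sided cocycles $B^{\xi\pm}$ have strictly different means (since $\nabla\gpp(\xi-)\ne\nabla\gpp(\xi+)$), so the event $\{B^{\xi-}(0,e_1) - B^{\xi-}(0,e_2) > 0 > B^{\xi+}(0,e_1) - B^{\xi+}(0,e_2)\}$ has positive probability and forces $\cid = \xi$. Finally, on an open interval $J\subset\ri\Uset$ disjoint from the closed linear segments of $\gpp$, the means $\E[B^\xi(0,e_1) - B^\xi(0,e_2)]$ strictly decrease across $J$ (by strict concavity of $\gpp$ there), and the coupled sign-flip occurs in $J$ with positive probability, giving $\P(\cid\in J)>0$. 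The main technical hurdle is the joint coupling of $\{B^{\xi\pm}\}_\xi$ needed for the sandwich to deliver a single random threshold $\cid$; this rests on the pathwise monotonicity and one-sided continuity in $\xi$ supplied by the cocycle construction of Theorem \ref{th:cocycles}, after which the remaining work is careful bookkeeping of signs at countably many directions and across the open intervals of strict concavity.
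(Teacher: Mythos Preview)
Your route to the law of large numbers \eqref{ci-lln2} is sound and slightly different from the paper's. You work directly with the characterization \eqref{ci:19} of $\varphi_n$ as the sign-change location of $k\mapsto \Gpp_{e_2,(k,n-k)}-\Gpp_{e_1,(k,n-k)}$, pass to the Busemann limit via Theorem~\ref{thm:buse}, and then sandwich against a countable dense subset of $\Diff$ using the pathwise monotonicity of the coupled cocycles from Theorem~\ref{th:cocycles}. The paper instead sandwiches $\varphi$ between two \emph{cocycle geodesics} $x^{0,\zeta,+}_{0,\infty}$ and $x^{0,\eta,-}_{0,\infty}$ and invokes their directedness (Theorem~\ref{th:B-geod:direction}). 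Your argument is more direct for the limit itself; the paper's geodesic machinery pays off elsewhere, as explained next.

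The genuine gap is in your positivity claims. For $\xi\in(\ri\Uset)\smallsetminus\Diff$ you assert that
\[
\Pbig\bigl\{\,B^{\xi}_{-}(0,e_1)-B^{\xi}_{-}(0,e_2)>0>B^{\xi}_{+}(0,e_1)-B^{\xi}_{+}(0,e_2)\,\bigr\}>0
\]
because the one-sided cocycles have different means. But different means, even combined with the monotone coupling $B^{\xi}_{-}(0,e_1)-B^{\xi}_{-}(0,e_2)\ge B^{\xi}_{+}(0,e_1)-B^{\xi}_{+}(0,e_2)$, only yields that their \emph{difference} is strictly positive with positive probability; it does not force a sign change across zero. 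The same objection applies to your open-interval claim: for $]\zeta,\eta[$ in the strictly concave region you need the \emph{joint} event $\{B^\zeta(0,e_1)>B^\zeta(0,e_2)\}\cap\{B^\eta(0,e_1)<B^\eta(0,e_2)\}$, which is not delivered by strict monotonicity of the means alone.

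The paper closes this gap with a geodesic-separation argument. Lemma~\ref{geo-sep} shows, via the cocycle ergodic theorem (Theorem~\ref{th:Atilla}), that if the cocycle geodesics $x^{0,\xi,+}_{0,\infty}$ and $x^{0,\xi,-}_{0,\infty}$ coincided with positive probability, one could force $\nabla\gpp(\xi+)=\nabla\gpp(\xi-)$ along a subsequence, contradicting $\xi\notin\Diff$. Separation at some (random) point $z$ then gives exactly the sign pattern $B^{\xi}_{-}(z,z+e_2)<B^{\xi}_{-}(z,z+e_1)$ and $B^{\xi}_{+}(z,z+e_1)<B^{\xi}_{+}(z,z+e_2)$, and stationarity transports this to the origin. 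The open-interval statement (Theorem~\ref{th:two-geo}\eqref{th:two-geo:iv}) is handled analogously: the $\Uset_{\zeta+}$- and $\Uset_{\eta-}$-directedness of $x^{0,\zeta,+}_{0,\infty}$ and $x^{0,\eta,-}_{0,\infty}$ forces them to separate at some $z$, whence $\cid(T_z\what)\in\,]\zeta,\eta[$. To complete your proof you need to supply this separation step (or an equivalent ergodic argument); the mean computation by itself is not enough.
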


%\begin{remark} 
When $\w_0$ has continuous distribution,  $\gpp$ is expected to be strictly concave.   Thus the assumption that $\gpp$ is differentiable at the endpoints of its linear segments  should really be vacuously true in the theorem.   In light of the expectation that $\gpp$ is differentiable, the conjecture for  $\cid$ would be that it has a continuous distribution.  

In the exponential case, \eqref{ci-lln2}  and the explicit distribution of $\cid$ were given  in Theorem 1 of \cite{Fer-Pim-05}.

%\end{remark} 

\begin{remark}\label{rem:cif-var}    Assume  that  $\P\{\w_0\le r\}$ is continuous and  that $\gpp$ is either differentiable or strictly concave on 
$\ri\Uset$ so that no caveats are needed.  The minimum   in  \eqref{var151}  with $B=B^\xi$ is taken at $j=1$ if $\xi\cdot e_1>\cid(T_{x_i}\w)\cdot e_1$ and  at $j=2$ if $\xi\cdot e_1<\cid(T_{x_i}\w)\cdot e_1$.   
This  will become clear  from an alternative definition  \eqref{ci} of  $\cid$.  
%is a consequence of the following two facts:   (i)  $\w_0=B^\xi(0,e_1)\wedge B^\xi(0,e_2)$  as follows from \eqref{var151}, and (ii)  for $\xi\cdot e_1<\cid\cdot e_1<\zeta\cdot e_1$  we have   $B^\xi(0,e_1)> B^\xi(0,e_2)$ and  $B^\zeta(0,e_1)< B^\zeta(0,e_2)$.  The second fact will become clear in Section \ref{sec:ci-pf}.  
\end{remark}

The competition interface is a natural direction in which there are two geodesics out of $0$.  Nonuniqueness in the random direction $\cid$ does not violate the almost sure uniqueness in a fixed direction given in Theorem \ref{thm1}\eqref{thm1:cont}.       For $x\in\Z^2$ let $\Uset_*^x$ be the random set of directions $\xi\in\Uset$ such that there are at least two $\Uset_\xi$-directed semi-infinite geodesics out of $x$.  

 \begin{theorem}\label{thm:ci-2} 
Assume $\P\{\w_0\le r\}$ is continuous in $r$. Assume $\gpp$ is differentiable at the endpoints of all its linear segments. The following statements are true with $\P$-probability one and for all $x\in\Z^2$.
\begin{enumerate}[\ \ \rm(i)]
\item\label{thm:ci-2:(i)} $\cid(T_x\w)$ is the unique direction $\xi$ such that there are at least two $\Uset_\xi$-directed semi-infinite geodesics from $x$ that separate at $x$ and never intersect thereafter.
\item\label{thm:ci-2:(ii)}     $\Uset_*^x$ contains all $\xi\in(\ri\Uset)\smallsetminus\Diff$,  intersects every open interval outside the closed linear segments of $\gpp$, and is a countably infinite subset of $\{\cid(T_z\w):z\ge x\}$. 
\end{enumerate} 
\end{theorem}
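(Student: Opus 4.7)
The argument rests on two ingredients: (a) continuity of the weight distribution makes finite geodesics unique, so any two distinct semi-infinite geodesics that meet at two vertices $z\le z'$ must coincide on the segment between them; in particular, two geodesics that separate at a vertex $z$ can never meet again; and (b) Theorem~\ref{thm:ci-1}, which says that $\cid(T_x\w)$ is almost surely an exposed point of $\gpp$ in $\ri\Uset$, lies in every open interval outside the closed linear segments of $\gpp$ with positive probability, and has atoms exactly on $(\ri\Uset)\smallsetminus\Diff$.

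For the existence half of \eqref{thm:ci-2:(i)}, I trace the right boundary of the subtree $\cT_{x+e_1}$ and the left boundary of the subtree $\cT_{x+e_2}$ of the geodesic tree $\cT_x$: each is a semi-infinite geodesic, they flank the competition interface $\varphi^x$ emanating from $x$, and they separate at $x$. Since $\varphi^x_n/n\to\cid(T_x\w)$ by Theorem~\ref{thm:ci-1} translated to $x$, and both geodesics sandwich $\varphi^x$ at every level, they share the asymptotic direction $\cid(T_x\w)$; by (a) they never meet after $x$. For uniqueness, suppose $\xi\ne\cid(T_x\w)$ admits two such geodesics, WLOG with first steps $e_1$ and $e_2$. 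The same sandwich forces their limit directions $\eta_1,\eta_2\in\Uset_\xi=[\ximin,\ximax]$ to satisfy $\eta_2\cdot e_1\le\cid(T_x\w)\cdot e_1\le\eta_1\cdot e_1$, so $\cid(T_x\w)\in[\ximin,\ximax]$. Exposedness rules out interior points of this segment; when $[\ximin,\ximax]$ is nondegenerate the standing hypothesis puts $\ximin,\ximax\in\Diff$, and then the supporting line at either endpoint supports the whole segment, so the endpoint is not exposed either. Hence $\Uset_\xi=\{\xi\}$ and $\xi=\cid(T_x\w)$, a contradiction.

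For \eqref{thm:ci-2:(ii)}, the containment $\Uset_*^x\subseteq\{\cid(T_z\w):z\ge x\}$ comes from combining (a) with \eqref{thm:ci-2:(i)}: given two distinct $\Uset_\xi$-geodesics from $x$, their last common vertex $z\ge x$ is a separation point past which they cannot meet, and \eqref{thm:ci-2:(i)} applied at $z$ identifies $\xi=\cid(T_z\w)$. The inclusion $(\ri\Uset)\smallsetminus\Diff\subseteq\Uset_*^x$ and the non-empty intersection of $\Uset_*^x$ with every open interval $I$ outside the closed linear segments of $\gpp$ both follow from (b) combined with ergodicity of $\P$ under lattice shifts: almost surely there exist infinitely many $z\ge x$ with $\cid(T_z\w)$ equal to any prescribed atom $\xi\in(\ri\Uset)\smallsetminus\Diff$ respectively with $\cid(T_z\w)\in I$; at any such $z$, part \eqref{thm:ci-2:(i)} supplies two separating geodesics, and prepending the unique finite geodesic from $x$ to $z$ lifts them to two distinct $\cid(T_z\w)$-directed geodesics from $x$. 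Countability of $\Uset_*^x$ is then immediate from the subset containment, and infinitude follows from the density claim applied to infinitely many disjoint open intervals.

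The main obstacle is the uniqueness half of \eqref{thm:ci-2:(i)}: this is exactly where the exposedness of $\cid(T_x\w)$ supplied by Theorem~\ref{thm:ci-1} and the differentiability-at-endpoints hypothesis on $\gpp$ must be used in tandem to exclude the possibility that $\cid(T_x\w)$ is an endpoint of a linear segment of $\gpp$ that also contains $\xi$. Without this coordinated use of (b) and the regularity hypothesis, additional directions could satisfy the separation characterization, and the rest of the argument would break down.
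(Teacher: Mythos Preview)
Your uniqueness argument for part \eqref{thm:ci-2:(i)} is correct and matches the paper's reasoning: the paper phrases it via Theorem~\ref{th:two-geo}\eqref{th:two-geo:ii}, but the content is the same sandwich of $\cid(T_x\w)$ inside $\Uset_\xi$ followed by the observation that, under the endpoint-differentiability hypothesis, a nondegenerate $\Uset_\xi$ contains no exposed points. Your treatment of part \eqref{thm:ci-2:(ii)} is also sound; where the paper invokes Theorem~\ref{th:two-geo}\eqref{th:two-geo:iv} to manufacture the splitting point $z\ge x$ directly from the cocycle picture, you substitute an ergodicity argument built on the positive-probability statements of Theorem~\ref{thm:ci-1}. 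That is a legitimate alternative route (though for the open-interval claim you should pass through a countable basis of rational-endpoint intervals to get a single full-measure event).

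The existence half of part \eqref{thm:ci-2:(i)} has a real gap, however. The lattice paths that ``sandwich $\varphi^x$ at every level'' --- say $r_n$ the leftmost vertex on level $n$ in the $e_1$-cluster and $l_n$ the rightmost in the $e_2$-cluster --- are up-right paths but \emph{need not be branches of the geodesic tree}. When the interface takes an $e_2$-step, so that $l_{n+1}=l_n+e_2$, both candidate predecessors $l_n$ and $l_n-e_1+e_2$ of $l_{n+1}$ lie in the $e_2$-cluster, and nothing in the definition of $\varphi$ forces $G_{0,l_n}+\w_{l_n}>G_{0,l_n-e_1+e_2}+\w_{l_n-e_1+e_2}$; if the inequality goes the other way the unique finite geodesic from $0$ to $l_{n+1}$ bypasses $l_n$ and the path $l_\cdot$ is not a geodesic. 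The symmetric failure hits $r_\cdot$ when $\varphi$ steps by $e_1$. So you cannot read two $\cid(T_x\w)$-directed semi-infinite geodesics off the tree picture in the way you describe.

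The paper obtains existence instead from Theorem~\ref{th:two-geo}\eqref{th:two-geo:i}: at the random direction $\cid(T_x\what)$ one forms the one-sided cocycle limits $B^*_\pm$ of \eqref{B*} and follows their minimal gradients out of $x$, producing a $\Uset_{\cid-}$-directed geodesic through $x+e_2$ and a $\Uset_{\cid+}$-directed one through $x+e_1$; the endpoint-differentiability hypothesis together with exposedness of $\cid$ then gives $\Uset_{\cid\pm}=\{\cid\}$. If you want to avoid the cocycles you could take subsequential limits of the finite geodesics from $x$ to $r_n$ and to $l_n$; these do go through $x+e_1$ and $x+e_2$ respectively, but you then owe a separate argument controlling \emph{both} the $\varliminf$ and the $\varlimsup$ of the direction of each limiting path, since adjacency to $\varphi^x$ only pins one side.
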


In the exponential case Theorem 1(1)--(2)  of \cite{Cou-11} showed   that $\Uset_*^x$ is countably infinite and dense. 
%, and part   \eqref{thm:ci-2:(iii)}  from above, 
  
% For the exactly solvable  corner growth model with exponential weights Coupier \cite{Cou-11} proved that the set of directions with two non-coalescing  geodesics out of $0$  is countable and dense in $\Uset$.   
% Part \eqref{thm:ci-2:iii} above says that this set of directions is inside $\{\cid(T_x\w):x\in\Z_+^2\}$. \textcolor{red}{Can we show this random subset is dense in the support of $\cid$?! Quite tantalizing!}
% Here we have a partial result towards characterizing this set as $\{\cid(T_x\w)\}_{x\in\Z_+^2}$.   Partial, because we consider only pairs of geodesics from a common initial point.  
% \textcolor{red}{Coupier does too. Geodesics from $0$. they just don't have to bifurcate right away.}
 
Theorems \ref{thm:ci-1} and  \ref{thm:ci-2} are  proved in Section \ref{sec:ci-pf}.    More is actually true.  In Section \ref{sec:ci-pf} we define $\cid$  on a larger probability space  in terms of a priori constructed cocycles,  without any assumptions on $\gpp$. 
   Then even  without  the differentiability assumptions of  Theorems \ref{thm:ci-1} and \ref{thm:ci-2}, corners of the limit shape are the  atoms of $\cid$, and 
    there are at least two $\Uset_{\cid\circ T_x}$-directed semi-infinite geodesics out of $x$ that immediately separate and never intersect after that.  
%However, at this stage of the paper we have no definition of $\cid$ without that assumption.  This will change in Theorem \ref{th:two-geo} in Section \ref{sec:ci-pf}.     
%In points \eqref{thm:ci-2:ii} and \eqref{thm:ci-2:iii} above $\gpp$ has no linear segments and so the differentiability of $\gpp$ at endpoints of linear segments is vacuously true.   The statements there are more general because $\cid$ is defined without any assumptions on $\gpp$. For instance, Theorem \ref{th:two-geo}\eqref{th:two-geo:iii} says that 
   (See Theorem \ref{th:two-geo} below.)    
 
%\note{maybe should mention that for directions $\xi$ for which Busemann functions exist, we have $\P\{\cid\cdot e_1>\xi\cdot e_1\}=\P\{B^\xi(0,e_1)>B^\xi(0,e_2)\}$. This allows to compute the distribution of the competition interface in the solvable cases...}

\smallskip 

When   $\w_0$ does  not have continuous distribution, there are two competition interfaces: one  for the  tree of leftmost geodesics   and one 
for the tree of  rightmost geodesics.    Then $\cid$ has natural left and right versions, defined  in \eqref{ci12}.  
We compute the limit distributions of the two competition interfaces for geometric weights in Sections \ref{subsec:solv} and \ref{sec:solv}. 

%The same arguments as for the continuous case lead to random variables $\cid^-,\cid^+\in\ri\Uset$ with $\cid^-\cdot e_1\le\cid^+\cdot e_1$ and  the competition interface of leftmost geodesics directed in $\Uset_{\cid^+}$, while the interface of rightmost geodesics directed in $\Uset_{\cid^-}$. \note{F added this. note that the notation for $\pm$ has changed}
%\begin{theorem}
%Assume $\P\{\w_0\le r\}$ is continuous in $r$.
%\begin{enumerate}[\ \ \rm(i)]
%\item All limit points of the asymptotic velocity of the competition interface are in 
%$\Uset_{\ci(\what)}$: for $\Phat$-almost every $\what$
%\begin{align}\label{ci-lln1}
%\cimin(\what)\cdot e_1\le\varliminf_{n\to\infty} n^{-1}\varphi_n(\w)\cdot e_1\le\varlimsup_{n\to\infty} n^{-1}\varphi_n(\w)\cdot e_1\le\cimax(\what)\cdot e_1.
%\end{align}
%\item If $\gpp$ is differentiable at the endpoints of its linear segments then $\ci$ is $\kS$-measurable and gives the asymptotic direction of the competition interface: $\Pbig$-almost surely
%\begin{align}\label{ci-lln2}
%\lim_{n\to\infty} n^{-1}\varphi_n(\w)=\ci(\what).
%\end{align}
%\end{enumerate}
%\end{theorem}

\smallskip

\subsection{Exactly solvable models}\label{subsec:solv}
We illustrate our results in the two exactly solvable cases: the distribution of the  weights   $\w_x$ with mean $\Ew>0$  is  either  
\be\label{cases7} \begin{aligned}
\text{exponential:  } \P\{\w_x\ge t\}&=  e^{-t/\Ew} \text{ for  $t\in\R_+$ with $\sigma^2=\Ew^2$,   }\\
\text{or geometric: } \P\{\w_x\ge k\}&=(1-\Ew^{-1})^{k} \text{ for  $k\in\N$ with  $\sigma^2=\Ew(\Ew-1)$.}
\end{aligned}\ee

For both cases in \eqref{cases7}  the point-to-point limit   function is 
 %\begin{align}\label{eq:gpp-solv}
 \[\gpp(\xi)=\Ew(\xi\cdot e_1+\xi\cdot e_2)+2\sigma\sqrt{(\xi\cdot e_1)(\xi\cdot e_2)}\,.\]
 %\end{align}
 In the exponential case this formula was first derived by Rost \cite{Ros-81} (who presented the model in its coupling with TASEP without the last-passage formulation)  while early derivations of the geometric case appeared in  \cite{Coh-Elk-Pro-96, Joc-Pro-Sho-98, Sep-98-mprf-1}.  
%  Convex duality \eqref{eq:duality} becomes 
%\begin{align*}
%&\xi\in\ri\Uset\text{ is dual to $h$ if and only if }\\
%&\qquad  h=\big(\Ew+\sigma^2\sqrt{{\xi\cdot e_1}/{\xi\cdot e_2}}+t,\, \Ew+\sqrt{{\xi\cdot e_2}/{\xi\cdot e_1}}+t\big),\quad t\in\R\,.\end{align*}
%This in turn gives an explicit formula for $\gpl(h)$.   

Since   $\gpp$  is  differentiable and strictly concave, $\ri\Uset=\EP$
%all points of $\ri\Uset$ are exposed points of differentiability, 
and all the results of the previous sections are valid.   
 Theorem \ref{thm:buse} implies that Busemann functions \eqref{eq:grad:coc1} exist in all directions  $\xi\in\ri\Uset$.
 % and   satisfy  \eqref{var151}. 
%For each $\xi\in\ri\Uset$ the processes $\{ B^\xi(ke_1, (k+1)e_1): k\in\Z_+\}  $
%and $\{ B^\xi(ke_2, (k+1)e_2): k\in\Z_+\}  $ are i.i.d.\ processes  independent of each other, exponential or geometric depending on the case,  with means 
%\be \label{geom:B}  
%\begin{aligned} 
%\E[B^\xi(ke_1, (k+1)e_1)]&=\Ew+\sigma  \sqrt{\xi\cdot e_2/\xi\cdot e_1}\\
% \E[B^\xi(ke_2, (k+1)e_2)]&=\Ew+\sigma \sqrt{\xi\cdot e_1/\xi\cdot e_2}\,.
% \end{aligned}\ee
The probability distribution of $B^\xi$ can be described explicitly. For the exponential case see for example Theorem 8.1 in \cite{Cat-Pim-12} or Section 3.3 in \cite{Cat-Pim-13},  and  Sections~3.1 and 7.1 in \cite{Geo-Ras-Sep-15a-} for both cases.  

  Section \ref{subsec:geo} gives the following results on geodesics.
For almost every $\w$ every semi-infinite geodesic has a direction. For every fixed direction $\xi\in\ri\Uset$ the following holds almost surely. There exists a $\xi$-geodesic out of every lattice point. 
In the exponential case, these $\xi$-geodesics are unique and coalesce. In the geometric case uniqueness cannot hold, but there exists 
a unique leftmost $\xi$-geodesic out of each lattice point and
these leftmost $\xi$-geodesics coalesce. The same holds for rightmost $\xi$-geodesics. Finite (leftmost/rightmost) geodesics from $u\in\Z^2$ to $v_n$ converge to infinite (leftmost/rightmost) 
$\xi$-geodesics out of $u$, as $v_n/\abs{v_n}_1\to\xi$ with $\abs{v_n}_1\to\infty$. 

%\textcolor{blue}{(also: no doubly infinite geodesics in a given $\xi$ (Lemma 2 of \cite{Pim-13-}). section \ref{sec:ci} gives: no triple geodesics out of any point.  the set of directions in which there are two geodesics out of any given site $x$ is countable and contained in $\{\cid(T_z\w):z\ge x\}$). outside this set there is uniqueness. these were shown by \cite{Cou-11}.  where should we insert these? also note that we already mentioned all the results on exponential weights, right after  the appropriate theorems...}

 The description of random directions for nonuniqueness of geodesics in Theorem \ref{thm:ci-2}\eqref{thm:ci-2:(i)}--\eqref{thm:ci-2:(ii)} applies to the exponential case. 
In the exponential case the asymptotic direction   $\cid$ of the competition interface  given by Theorem \ref{thm:ci-1}   has been studied by several authors, not only for percolation in the first quadrant $\Z_+^2$ as studied here,  but with much more general  initial profiles 
\cite{Fer-Pim-05, Fer-Mar-Pim-09, Cat-Pim-13}.   
% Hence we skip the exponential case and write down a result for the geometric case.  

% can be computed explicitly.  For the angle  $\theta_*=\tan^{-1}(\cid\cdot e_2/\cid\cdot e_1)$ of the vector $\cid$, 
%\begin{align}\label{eq:ci-exp}
%\P\{\theta_*\le t\}=\frac{\sqrt{\sin t}}{\sqrt{\sin t}+\sqrt{\cos t}},\qquad t\in[0,\pi/2].
%\end{align}

%In the exponential  case these results for  geodesics and the competition interface were shown in \cite{Fer-Pim-05}.  This paper utilized   techniques  for geodesics  from \cite{New-95} and   the coupling of the exponential corner growth model with the totally asymmetric simple exclusion process (TASEP).   For this case our approach provides new proofs.   \textcolor{blue}{(NO!  Cator-Pimentel 2013 already do it exactly like this.  Their results generalize Fer-Mar-Pim 2009 who generalize Fer-Pim 2005.)}  

The model with  geometric weights has a tree of leftmost geodesics with competition interface $\varphi^{(l)}=(\varphi^{(l)}_k)_{k\ge 0}$  and a tree of  rightmost geodesics with competition interface $\varphi^{(r)}=(\varphi^{(r)}_k)_{k\ge 0}$.  
  Note that  $\varphi^{(r)}$ is to the {\sl left} of  $\varphi^{(l)}$  because in \eqref{ci:19} there is now a middle range   $\Gpp_{e_2,(k,n-k)}-\Gpp_{e_1,(k,n-k)}=0$ that is to the right (left) of  $\varphi^{(r)}$ ($\varphi^{(l)}$).  Strict concavity of the limit $\gpp$ implies (with the arguments of Section \ref{sec:ci-pf})  the almost sure  limits 
\[  n^{-1}  \varphi^{(l)}_n \to \cid^{(l)} \quad \text{and}\quad 
n^{-1}  \varphi^{(r)}_n \to \cid^{(r)}. \]
The angles $\theta_*^{(a)} =\tan^{-1} ( \cid^{(a)}\!\cdot e_2/\cid^{(a)}\!\cdot e_1)$  for $a\in\{l,r\}$  have the following  distributions (with $p_0=m_0^{-1}$ denoting the success probability of the geometric): for  $t\in[0,\pi/2]$ 
\be\label{geom:theta} \begin{aligned} 
\P\{\theta_*^{(r)}\le t\}&=\frac{\sqrt{(1-p_0)\sin t}}{\sqrt{(1-p_0)\sin t}+\sqrt{\cos t}}\,  \\
\text{and}\qquad  \P\{\theta_*^{(l)}\le t\}&=\frac{\sqrt{\sin t}}{\sqrt{\sin t}+\sqrt{(1-p_0)\cos t}}\, .\end{aligned}\ee
%The joint distribution of $(\theta_*^{(l)}, \theta_*^{(r)})$ can also be computed, but is more messy.  \note{verify or remove} 
  Section \ref{sec:solv} derives \eqref{geom:theta}.  Taking $p_0\to0$ recovers   the exponential case first proved in \cite{Fer-Pim-05}.  
\bigskip
 
We turn to describe the setting of stationary cocycles in which  our results are  proved.

%\section{Cocycles}\label{sec:cocycles}
%\section{Solution to the variational formulas}\label{sec:solution}
%\section{Stationary cocycles and solution of the variational formulas}\label{sec:cocycles}
\section{Stationary cocycles and Busemann functions}\label{sec:cocycles}

The results of this paper are based on a construction of stationary cocycles on an extended space  $\Ombig=\Omega\times\Omega'$ where $\Omega=\R^{\Z^2}$ is the original environment space and $\Omega'=S^{\Z^2}$ is another Polish product space.    The details of the construction are in Section~7 of \cite{Geo-Ras-Sep-15a-}.  Spatial translations act   in the usual manner:   with  generic elements  of $\Ombig$  denoted by 
$\what=(\w, \w') =(\w_x, \w'_x)_{x\in\Z^2}=(\what_x)_{x\in\Z^2}$,  $(T_x\what)_y=\what_{x+y}$ for $x,y\in\Z^2$.    The extended probability space is  $(\Ombig, \kSbig, \Pbig)$  where $\kSbig$ is the Borel $\sigma$-algebra and $\Pbig$ is a translation-invariant probability measure. $\Ebig$ denotes expectation under $\Pbig$.    In this setting a cocycle is defined as follows.

\begin{definition} 
\label{def:cK}
	A measurable function $B:\Ombig\times\Z^2\times\Z^2\to\R$ is   a {\rm stationary $L^1(\Pbig)$  cocycle}
	if it satisfies the following three conditions $\forall x,y,z\in\Z^2$. %\medskip
		\begin{enumerate}[\ \ \rm(a)]
			\item\label{def:cK:int} Integrability:   $\Ebig\abs{B(x,y)} <\infty$.
			%\item[{\rm(b)}] Mean zero: for all each $z\in\range$, $\E[B(\w,z)]=0$.\medskip
			\item\label{def:cK:stat} Stationarity:  for $\Pbig$-a.e.\ $\what$, 
					$B(\what, z+x,z+y)=B(T_z\what, x,y)$.
			\item\label{def:cK:coc} Additivity:   for $\Pbig$-a.e.\ $\what$,  $B(\what, x,y)+B(\what, y,z)=B(\what, x,z)$.
			%any two paths $x_{0,N},\xtil_{0,N}\in\Pi_{0,x}$ for some $x\in\Z_+^2$ with $|x|=N$, we have
			%\begin{align*}
			%&\sum_{i=0}^{N-1} B(T_{x_i}\w,x_{i+1}-x_i)
			%=\sum_{j=0}^{N-1} B(T_{\xtil_i}\w,\xtil_{i+1}-\xtil_i).
			%\end{align*}
		\end{enumerate}
	%The space  of stationary $L^1(\P)$  cocycles on $\OBP$  is denoted by $\Coc(\Omega)$.
\end{definition}
%The cocycle property \eqref{def:cK:coc} implies that $B(x,x) = 0$ for all $x \in \Z^2$ and the antisymmetry property $B(x,y) = -B(y,x)$. 
%Let   $h(\B)\in\R^2$ denote the vector  that   satisfies 
% 	\be 	
%		\E[\B(0,e_i)]=-h(\B)\cdot e_i \qquad\text{for  $i\in\{1,2\}$. }  \label{eq:EB}
%	\ee

    The cocycles of interest are related to the last-passage weights through the next definition.
	\begin{definition}
		\label{def:bdry-model}
			%Let $V:\Omega\to\R$ be measurable. 
			A stationary $L^1$ cocycle $B$ on $\Omhat$  {\rm recovers} 
			weights $\w$ if
			%or {\rm is adapted to}  
			%potential $V$ if 
				\be \label{eq:VB}
					%V(\what)=\min_{i\in\{1,2\}} B(\what, 0,e_i)\quad\text{ for }\Pbig\text{-a.e.\ }\what.
					\w_x=\min_{i\in\{1,2\}} B(\what, x, x+e_i)\quad\text{ for }\Pbig\text{-a.e.\ }\what\ \text{ and } \ \forall x\in\Z^2.
				\ee
	 \end{definition}

The next theorem  (reproduced from Theorem~5.2 in \cite{Geo-Ras-Sep-15a-}) states  the existence  and  properties of the cocycles.   
Assumption  \eqref{2d-ass} is in force.   This is the only place where the assumption $\P(\w_0\ge c)=1$ is needed,  and the only reason is that the queueing results that are used to prove the theorem assume  $\w_0\ge 0$.  
In part \eqref{th:cocycles:indep} below we use  this notation:  for  a finite or infinite set $I\subset\Z^2$,   $I^<=\{x\in\Z^2: x\not\ge z \;\forall z\in I\}$ is the set of lattice points that
do not lie on a  ray from $I$ at an angle in $[0,\pi/2]$. For example, if $I=\{0,\dotsc,m\}\times\{0,\dotsc,n\}$ then $I^<=\Z^2\smallsetminus\Z_+^2$.  
 
\begin{theorem}\label{th:cocycles}
There exist real-valued Borel functions 
$B^\xi_\pm(\what,x,y)$ of  $(\what,\xi,x,y)\in\Ombig\times \ri\Uset\times\Z^2\times\Z^2$   and a translation invariant 
  Borel probability measure $\Pbig$ on  $(\Ombig, \kSbig)$ such that  the following  properties hold.  
 \begin{enumerate}[\ \ \rm(i)]
\item\label{th:cocycles:indep} 
Under $\Pbig$, the marginal distribution of the configuration $\w$ is the i.i.d.\ measure $\P$ specified in assumption \eqref{2d-ass}. 
%For each $\xi\in\ri\Uset$, 
%%Let $\sharp\in\Uset^*$.   For each 
%$x\in\Z^2$, and   $i\in\{1,2\}$,  the  function   
%$B^\xi_{\pm}(\what, x,x+e_i)$ 
%%$B^{\sharp}(\what, x,x+e_i)$ 
%is a function only of  $(\w_x^{i,\alpha}:\alpha\in\cA_0)$.  
For each $\xi\in\ri\Uset$ and $\pm$, the  $\R^3$-valued  process   
$\{\psi^{\pm, \xi}_x\}_{x\in\Z^2}$
%$\{\psi^{\sharp}_x\}_{x\in\Z^2}$ 
defined by   
\begin{align}\label{erg-B}
\psi^{\pm, \xi}_x(\what)=(\w_x, B^\xi_{\pm}(\what, x,x+e_1), B^\xi_{\pm}(\what,x,x+e_2))
\end{align}
%\[ \psi^{\sharp}_x(\what)=(\w_x, B^{\sharp}(x,x+e_1), B^{\sharp}(x,x+e_2))\] 
is separately ergodic under both translations $T_{e_1}$ and $T_{e_2}$.  
%The same holds with $\xi+$ replaced by $\xi-$.  
For any $I\subset\Z^2$,   the variables 
\[ \bigl \{ (\w_x, B^\xi_{+}(\what,x,x+e_i), B^\xi_{-}(\what,x,x+e_i)) : x\in I, \, \xi\in\ri\Uset, \, i\in\{1,2\} \bigr\}\] 
 are  independent of $\{ \w_x : x\in I^< \}$.   \\[-8pt]
%For each $z\in\Z^2$,  the variables 
%\[  \{ (\w_x, B^\xi_{+}(\what,x,x+e_i), B^\xi_{-}(\what,x,x+e_i)) : x\not\le z, \, \xi\in\ri\Uset, \, i\in\{1,2\} \}\] 
%%\[  \{ (\w_x, \, B^{\sharp}(\what,x,x+e_i)) : x\not\le z, \, \sharp\in\Uset^*, \, i\in\{1,2\} \}\] 
% are  independent of $\{ \w_x : x\le z \}$. 
 
 \item\label{th:cocycles:exist}  Each process 
 $B^\xi_{\pm}=\{B^\xi_{\pm}(x,y)\}_{x,y\in\Z^2}$ 
%  $B^{\sharp}=\{B^{\sharp}(x,y)\}_{x,y\in\Z^2}$
  is a stationary $L^1(\Pbig)$ cocycle {\rm(}Definition \ref{def:cK}{\rm)}  that recovers the weights $\w$   {\rm(}Definition \ref{def:bdry-model}{\rm)}:
  \be\label{Bw-9}  \w_x=B^\xi_{\pm}(\what, x,x+e_1) \wedge B^\xi_{\pm}(\what,x,x+e_2)\qquad \text{$\Pbig$-a.s.}  \ee 
 %, and the same is true of $B^{\xi-}$.  
  The mean  vectors   %$h_\pm(\xi)$, defined by the first equality in \eqref{eq:h=grad}, 
  satisfy
					\begin{align}\label{eq:h=grad} 
					%-h_\pm(\xi)=
					\Ebig[B^\xi_{\pm}(0,e_1)]e_1+\Ebig[B^\xi_{\pm}(0,e_2)]e_2=\nabla\gpp(\xi\pm).
					\end{align} \\[-8pt]

\item\label{th:cocycles:flat} 			 No two distinct cocycles have  a common mean vector.  That is, if $\nabla\gpp(\xi+)=\nabla\gpp(\zeta-)$ then %$h_+(\xi)=h_-(\zeta)$ then 
	\[  B^\xi_{+}(\what,x,y)=B^\zeta_{-}(\what,x,y)  
	\quad \forall\, \what\in\Ombig, \, x,y\in\Z^2  \] 
and similarly for all four combinations of $\pm$ and $\xi,\zeta$.  
%$h_+(\xi)=h(\zeta+)$ implies  $B^{\xi+}(\what,x,x+e_i)=B^{\zeta+}(\what,x,x+e_i)$.   
 These equalities  hold    for all $\what$   without an   almost sure modifier    because they come directly   from the construction.   In particular,  if   $\xi\in\Diff$ then 
% is a point of differentiability for $\gpp$ then 
	\[  	 B^\xi_{+}(\what,x,y)= B^\xi_{-}(\what,x,y) = B^\xi(\what,x,y)\quad \forall \,\what\in\Ombig, \, x,y\in\Z^2, 
	%B^{\xi+}(\what, x,y)= B_-(\xi,\what, x,y) = B^{\xi}(\what, x,y)  \quad\text{ for all $\what\in\Ombig$ and $x,y\in\Z^2$} 
	 \]
	where the second equality defines the cocycle $B^\xi$.  \\[-8pt]
					
\item\label{th:cocycles:cont}  
There exists an event $\Ombig_0$ with $\Pbig(\Ombig_0)=1$ and such that {\rm(a)} and {\rm(b)} below  hold for all $\what\in\Ombig_0$, $x,y\in\Z^2$ and  $\xi,\zeta\in\ri\Uset$.  
 
{\rm(a)}  Monotonicity: if     $\xi\cdot e_1<\zeta\cdot e_1$ then  
					\be\begin{aligned}  
						B^\xi_{-}(\what, x,x+e_1) &\ge B^\xi_{+}(\what, x,x+e_1) \ge B^\zeta_{-}(\what, x,x+e_1) \\ \quad\text{and}\quad B^\xi_{-}(\what,x,x+e_2) &\le B^\xi_{+}(\what, x,x+e_2) \le B^\zeta_{-}(\what, x,x+e_2). 
					\end{aligned} \label{eq:monotone} 
					\ee
					
{\rm(b)}	 Right continuity: if $\zeta_n\cdot e_1\searrow\xi\cdot e_1$  then 
					\begin{align}\label{eq:cont}
						\lim_{n\to\infty}B^{\zeta_n}_{\pm}(\what, x,y) = B^\xi_{+}(\what,x,y)  . %\qquad \text{$\Pbig$-a.s.\ and in $L^1(\Pbig)$.}   
					\end{align}  \\[-8pt]
					
\item\label{th:cocycle:cont-left}
Left continuity at a fixed $\xi\in\ri\Uset$:   there exists an event $\Ombig^{(\xi)}$ with $\Pbig(\Ombig^{(\xi)})=1$ and such that for any sequence $\zeta_n\cdot e_1\nearrow\xi\cdot e_1$  
	\begin{align}\label{eq:cont-left}
						\lim_{n\to\infty}B^{\zeta_n}_{\pm}(\what, x,y) = B^\xi_{-}(\what,x,y)\qquad \text{for $\what\in\Ombig^{(\xi)}, x,y\in\Z^d$.}   
					\end{align}

%Limits \eqref{eq:cont} and \eqref{eq:cont-left}   hold also   in $L^1(\Pbig)$ due to inequalities \eqref{eq:monotone}.  
\end{enumerate} 
\end{theorem}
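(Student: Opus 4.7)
The starting point is the queueing-theoretic interpretation of the LPP recursion $\Gpp_{0,x}=\w_x+\max(\Gpp_{0,x-e_1},\Gpp_{0,x-e_2})$. Reading the $e_2$-increments of $\Gpp$ along a fixed column as the departure epochs of a single-server queue whose service times are the weights $\w_x$ in that column and whose arrivals are the departures from the neighboring column, one is led to the fixed-point equations for a $\cdot/G/1/\infty$ queue with i.i.d.\ services. For each parameter $\alpha$ strictly larger than $\E[\w_0]$, the Mairesse--Prabhakar theorem yields a unique translation-invariant ergodic stationary distribution on the inter-arrival process with mean $\alpha$; this produces a single cocycle on one column, with the identity $\w_x=B(x,x+e_1)\wedge B(x,x+e_2)$ forced by the queueing dynamics.

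\textbf{Joint construction, monotonicity, and continuity.} The main task is to realize the whole family $\{B^\xi_\pm:\xi\in\ri\Uset\}$ on one extended space $\Ombig=\Omega\times\Omega'$ in a way that is monotone in $\xi$. I would first pick a countable dense $D\subset\ri\Uset$, parametrize the corresponding queueing fixed points by the mean vector (which will become $\nabla\gpp(\xi\pm)$ via \eqref{eq:h=grad}), and couple them on a common space through the monotone coupling of $\cdot/G/1$ queues driven by shared service times: the queue with larger mean inter-arrival time has pointwise larger inter-arrivals, which translates into \eqref{eq:monotone} on $D$. Define $B^\xi_\pm(\what,x,y)$ for $\xi\in\ri\Uset$ as monotone limits from the right and left along $D$; right-continuity \eqref{eq:cont} is then built in, and the almost-sure left-continuity \eqref{eq:cont-left} at a fixed $\xi$ follows because the monotone left limit agrees with $B^\xi_-$ at each individual point. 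The collapse statement in \eqref{th:cocycles:flat} is a byproduct of the uniqueness of the queueing fixed point for each prescribed mean.

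\textbf{Cocycle identities, mean vector, and independence.} Stationarity, additivity, and integrability of Definition \ref{def:cK} transfer from the queueing fixed point by construction, and the weight-recovery identity \eqref{Bw-9} is built into the queueing dynamics. To identify the mean vector with $\nabla\gpp(\xi\pm)$ as in \eqref{eq:h=grad}, I would run a stationary last-passage model whose boundary increments on the axes are given by $B^\xi_\pm$: Birkhoff's theorem for this stationary model produces the linear rate $\Ebig[B^\xi_\pm(0,e_1)]\,\zeta\cdot e_1+\Ebig[B^\xi_\pm(0,e_2)]\,\zeta\cdot e_2$ for the passage time in direction $\zeta$, and a sandwich between the stationary and the free models using \eqref{lln5} together with concavity of $\gpp$ identifies this linear functional with the one-sided supporting line of $\gpp$ at $\xi\pm$. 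Ergodicity of \eqref{erg-B} under each $T_{e_i}$ comes from ergodicity of the queueing fixed point under vertical shifts together with the translation-invariance of the joint construction under horizontal shifts. The independence claim in \eqref{th:cocycles:indep} follows because, in the queueing picture, the cocycle data on $I$ is measurable with respect to the services in $I$ and the arrival process feeding $I$ from the columns to the right, hence independent of $\{\w_x:x\in I^<\}$.

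\textbf{Main obstacle.} The substantial difficulty is the simultaneous construction on a single probability space of all the queueing fixed points, together with their one-sided limits $B^\xi_\pm$ at potential corners of $\gpp$, in such a way that monotonicity \eqref{eq:monotone}, right-continuity \eqref{eq:cont}, and the weight-recovery identity \eqref{Bw-9} all hold for every $\xi\in\ri\Uset$ on a common full-measure event. This is where the $2+\e$ moment and the lower bound on $\w_0$ are used, and it is the technical core of \cite{Geo-Ras-Sep-15a-}: the Mairesse--Prabhakar theorem produces each cocycle in isolation, but assembling them into a monotone, right-continuous family driven by the same weights $\w$ requires a careful diagonalization along $D$ together with tightness estimates on the joint law of inter-arrival processes across parameter values, plus a verification that the monotone limits inherit the queueing fixed-point property and hence the recovery identity.
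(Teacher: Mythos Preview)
The paper does not prove this theorem at all: it is quoted verbatim as Theorem~5.2 of the companion article \cite{Geo-Ras-Sep-15a-}, with the remark that the construction is carried out in Section~7 of that paper using queueing fixed points from \cite{Mai-Pra-03,Pra-03}. Your outline is consistent with that cited approach---queueing fixed points for $\cdot/G/1$ queues indexed by the mean inter-arrival time, a monotone coupling over a countable dense set of parameters, extension to all $\xi$ by one-sided monotone limits, and identification of the mean vector with $\nabla\gpp(\xi\pm)$ via a stationary LPP comparison---so as a high-level plan it matches what the companion paper does; but since there is no proof in the present paper to compare against, your sketch is really a summary of \cite{Geo-Ras-Sep-15a-} rather than an alternative to anything here.
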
\smallskip

The  next result  (Theorem~6.1 in \cite{Geo-Ras-Sep-15a-}) relates  the cocycles $B^\xi_{\pm}$ to limiting $G$-increments.  We quote the theorem in full  for use in the proof of Theorem \ref{thm:buse-geo}  below.    \eqref{2d-ass} is  assumed.
%Recall the set $\EP$ given in \eqref{eq:epod}.  Recall cocycles $B^\xi$ from Theorem \ref{th:cocycles}.  %Let $\cD_0$ be a countable dense set in the relative interior of $\EP$. 
Recall the line segment $\Uset_\xi=[\ximin, \ximax]$ with $\ximin\cdot e_1\le \ximax\cdot e_1$ from \eqref{eq:sector1}--\eqref{eq:sector2}. 
%Given $\xi\in\ri\Uset$ and $s=\xi\cdot e_1/\xi\cdot e_2$,   let $\ximin$ and $\ximax$  denote the endpoints of the line segments on $\ri\Uset$  to the left and right of $\xi$  on which gradients of $\gpp$ agree with the one-sided gradients at $\xi$.  Namely,  
%\be\label{bu:ximin} 
%\ximin=\frac{\smin e_1+e_2}{1+\smin} \quad\text{and}\quad  
%\ximax=\frac{\smax e_1+e_2}{1+\smax} \ee 
% with  
%\be\label{bu:smin} 
% \smin=\inf\{ t\le s:  \gppa'(t-)=\gppa'(s-) \} \quad\text{and}\quad 
% \smax=\sup\{ t\ge s:  \gppa'(t+)=\gppa'(s+) \} . 
% \ee
%Lemma \ref{gppa-lm} implies that $0<\smin\le \smax<\infty$.    Note that $\xi$ can coincide with either $\ximin$ or $\ximax$ or both.   $\xi$ is an exposed point if and only if  $\xi=\ximin=\ximax$.

\begin{theorem}\label{th:construction} 
%Consider the setting of  Theorem \ref{th:cocycles}.   Assume \eqref{2d-ass}.
Fix $\xi\in\ri\Uset$.  
%Consider a maximal  line segment $\Uset'\subset\ri\Uset$ on which $\gpp$ is linear. {\rm(}The  segment is of the form $\Uset_\xi=[\ximin,\ximax]$ for any $\xi\in\Uset'$ and  can  degenerate to a point.{\rm)}
Then there exists an event  $\Ombig_0$ with $\Pbig(\Ombig_0)=1$ such that for each 
$\what\in\Ombig_0$ and  for any  sequence $v_n\in\Z_+^2$ that satisfies 
\begin{align}\label{eq:vn66}
\abs{v_n}_1\to\infty\quad\text{and}\quad\ximin\cdot e_1\le\varliminf_{n\to\infty}\frac{v_n\cdot e_1}{\abs{v_n}_1}\le\varlimsup_{n\to\infty}\frac{v_n\cdot e_1}{\abs{v_n}_1}\le\ximax\cdot e_1,
\end{align}
  we have
\be\label{bu:G1}  \begin{aligned}   B^{\ximax}_{+}(\what, x,x+e_1)&\le \varliminf_{n\to \infty} \big( \Gpp_{x, v_n}(\w) - \Gpp_{x+e_1, v_n}(\w) \big) \\ &\le \varlimsup_{n\to \infty} \big( \Gpp_{x, v_n}(\w) - \Gpp_{x+e_1, v_n}(\w)  \big) \le B^{\ximin}_{-}(\what, x,x+e_1)  
\end{aligned}  \ee
and 
\be\label{bu:G2}  
\begin{aligned}  B^{\ximin}_{-}(\what, x,x+e_2)  &\le \varliminf_{n\to \infty} \big( \Gpp_{x, v_n}(\w) - \Gpp_{x+e_2, v_n}(\w) \big)\\
&\le \varlimsup_{n\to \infty} \big( \Gpp_{x, v_n}(\w) - \Gpp_{x+e_2, v_n}(\w) \big) \le B^{\ximax}_{+}(\what, x,x+e_2).  
\end{aligned}  \ee
\end{theorem}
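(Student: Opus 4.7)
The plan is to derive \eqref{bu:G1}--\eqref{bu:G2} from the pointwise cocycle domination combined with the cocycle ergodic theorem from the appendix and the shape theorem. I focus on the limsup bound for the $e_1$-increment, namely $\limsup_n(\Gpp_{x,v_n}-\Gpp_{x+e_1,v_n})\le B^{\ximin}_-(x,x+e_1)$, as the other three bounds follow by parallel arguments using monotonicity \eqref{eq:monotone} and the interchange $e_1\leftrightarrow e_2$ (which also swaps the roles of $B^{\ximin}_-$ and $B^{\ximax}_+$).

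The recovery property \eqref{Bw-9} gives $\w_z\le B^\zeta_\pm(z,z+e_i)$ for every $z\in\Z^2$, $i\in\{1,2\}$, $\zeta\in\ri\Uset$ and $\pm$. Summing along any admissible path from $u$ to $v$ and using cocycle additivity yields the pointwise inequality
\begin{equation*}
\Gpp_{u,v}\le B^\zeta_\pm(u,v)\qquad\text{for all }u\le v.
\end{equation*}
This produces a family of a priori upper bounds indexed by $\zeta$; the task is to extract the sharpest one compatible with the asymptotic direction of $v_n$.

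The key step is an auxiliary stationary LPP built from $B^{\ximin}_-$ on the south-west rays emanating from the base point $u$, with $\w$ in the bulk. Call the resulting passage time $G^{\ximin,-}_{u,v_n}$. Three observations apply: $G^{\ximin,-}_{u,v_n}\ge\Gpp_{u,v_n}$ (extra admissible paths), $G^{\ximin,-}_{u,v_n}\ge B^{\ximin}_-(u,u+e_1)+\Gpp_{u+e_1,v_n}$ (a specific boundary-then-bulk path), and by additivity $G^{\ximin,-}_{u,v_n}$ decomposes into a cocycle boundary contribution plus a bulk remainder depending on an exit point. The appendix's cocycle ergodic theorem and the shape theorem give the first-order rates
\begin{equation*}
\frac{B^{\ximin}_-(u,v_n)}{\abs{v_n}_1}\to\nabla\gpp(\ximin-)\cdot\zeta_\infty,\qquad\frac{\Gpp_{u,v_n}}{\abs{v_n}_1}\to\gpp(\zeta_\infty)
\end{equation*}
when $v_n/\abs{v_n}_1\to\zeta_\infty\in[\ximin,\ximax]$. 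An exit-point/characteristic-direction analysis then uses these rates together with concavity of $\gpp$ at the corner $\ximin$ to show that the maximum defining $G^{\ximin,-}_{u,v_n}$ is asymptotically attained at an exit point on the $e_1$-boundary within $o(\abs{v_n}_1)$ of the base point, forcing $G^{\ximin,-}_{u,v_n}=B^{\ximin}_-(u,u+e_1)+\Gpp_{u+e_1,v_n}+o(1)$. Combining with $\Gpp_{u,v_n}\le G^{\ximin,-}_{u,v_n}$ at $u=x$ yields the upper bound, and the mirror argument with $B^{\ximax}_+$ (whose characteristic direction is $\ximax$) provides the liminf bound.

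The main obstacle is the exit-point estimate. While the two limit theorems identify the first-order rates, upgrading to the $o(\abs{v_n}_1)$ control on the exit point's location requires a tightness argument whose inputs are (a) strict concavity of $\gpp$ transversal to the segment $[\ximin,\ximax]$ at its endpoint $\ximin$, provided by the boundary asymptotic \eqref{eq:g-asym} together with the fact that linear segments of $\gpp$ cannot touch the axes, and (b) a concentration bound that uses the $\pp>2$ moment assumption in \eqref{2d-ass}. All other steps of the proof reduce to stationarity and additivity of the cocycles.
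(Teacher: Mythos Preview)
This theorem is not proved in the present paper: it is quoted as Theorem~6.1 of the companion article \cite{Geo-Ras-Sep-15a-} and used here as a black box, so there is no in-paper argument to set yours against. That said, your outline is in the right spirit---stationary LPP with cocycle boundary weights and an exit-point analysis is indeed the toolkit used in \cite{Geo-Ras-Sep-15a-}---but the step you yourself flag as the ``main obstacle'' is a genuine gap, not merely a technicality.

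The problematic inference is that an exit point on the $e_1$-axis within $o(\abs{v_n}_1)$ of the base point forces $G^{\ximin,-}_{u,v_n}=B^{\ximin}_-(u,u+e_1)+\Gpp_{u+e_1,v_n}+o(1)$. This is false. If the exit is at $u+k_ne_1$ then
\[
G^{\ximin,-}_{u,v_n}-\bigl(B^{\ximin}_-(u,u+e_1)+\Gpp_{u+e_1,v_n}\bigr)
= B^{\ximin}_-(u+e_1,u+k_ne_1)-\bigl(\Gpp_{u+e_1,v_n}-\Gpp_{u+k_ne_1,v_n}\bigr),
\]
and both terms on the right are of order $k_n$, with no mechanism forcing their difference to vanish. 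In the solvable cases $k_n$ fluctuates on scale $\abs{v_n}_1^{2/3}$, so $k_n=o(\abs{v_n}_1)$ is far too coarse to yield an $o(1)$ remainder. The underlying error is that you are trying to extract a nearest-neighbour \emph{increment} from shape-level information about a single passage time; that costs precisely the $O(k_n)$ precision you cannot afford. The ``concentration bound that uses the $\pp>2$ moment'' is neither developed nor, in fact, needed.

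The argument in \cite{Geo-Ras-Sep-15a-} avoids this by never seeking such an $o(1)$ identity. One works instead with the tilted point-to-line quantity $\max_{\abs{z}_1=\abs{v_n}_1}\bigl(\Gpp_{x,z}+B^{\ximin}_-(z,v_n)\bigr)$, which by recovery \eqref{Bw-9} and backward induction equals $B^{\ximin}_-(x,v_n)$ \emph{exactly}; differencing at $x$ and $x+e_1$ produces $B^{\ximin}_-(x,x+e_1)$ with no error term. The planar monotonicity of $G$-increments in the endpoint (Lemma~\ref{lm:new:comp0}) then transfers this to $\Gpp_{x,v_n}-\Gpp_{x+e_1,v_n}$, provided the maximising $z$ sits on the correct side of $v_n$ along the anti-diagonal. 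Establishing that is an exit-\emph{side} estimate, not an exit-\emph{location} estimate, and it follows from the shape theorem \eqref{lln5} and the cocycle ergodic theorem (Theorem~\ref{th:Atilla}) alone.
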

%
%%OLD version:
%%\begin{theorem}\label{th:construction} %Consider the setting of  Theorem \ref{th:cocycles}.   Assume \eqref{2d-ass}.
%%Fix $\xi\in\ri\Uset$. Let $v_n\in\Z_+^2$ be a sequence such that  
%%\be\label{bu:v_n}  \frac{v_n}{\abs{v_n}_1}\to\xi \ \ \text{ as $n\to\infty$}.%  \ \  \text{ and } \ \  \abs{v_n}_1\ge\eta_0n  \  \ \text{ for some constant $\eta_0>0$. } 
%%\ee
%%Then almost surely 
%%\be\label{bu:G1}  \begin{aligned}   B^{\ximax+}(\what, x,x+e_1)&\le \varliminf_{n\to \infty} \big( \Gpp_{x, v_n}(\w) - \Gpp_{x+e_1, v_n}(\w) \big) \\ &\le \varlimsup_{n\to \infty} \big( \Gpp_{x, v_n}(\w) - \Gpp_{x+e_1, v_n}(\w)  \big) \le B^{\ximin-}(\what, x,x+e_1)  
%%\end{aligned}  \ee
%%and 
%%\be\label{bu:G2}  
%%\begin{aligned}  B^{\ximin+}(\what, x,x+e_2)  &\le \varliminf_{n\to \infty} \big( \Gpp_{x, v_n}(\w) - \Gpp_{x+e_2, v_n}(\w) \big)\\
%%&\le \varlimsup_{n\to \infty} \big( \Gpp_{x, v_n}(\w) - \Gpp_{x+e_2, v_n}(\w) \big) \le B^{\ximax-}(\what, x,x+e_2).  
%%\end{aligned}  \ee
%%\end{theorem}
%
%Theorem \ref{thm:buse} is an immediate consequence because, by Theorem \ref{th:cocycles}\eqref{th:cocycles:flat},  if  
%$\xi, \ximin, \ximax\in\Diff$  then   $B^{\ximin}_{\pm}=B^{\xi}=B^{\ximax}_{\pm}$ and this cocycle is exactly the limit in \eqref{eq:grad:coc1}.    
%

\begin{remark}\label{rm:cc} 
% \begin{enumerate}[\ \ \rm(i)]
%\item  \label{cc:b1} 
(i)   Theorem \ref{thm:buse} follows immediately because  
by Theorem \ref{th:cocycles}\eqref{th:cocycles:flat},   $B^{\ximin}_{\pm}=B^{\xi}=B^{\ximax}_{\pm}$  if  $\xi, \ximin, \ximax\in\Diff$.  
%Theorem \ref{thm:buse} is proved by showing that, almost surely,    
% for any  sequence $v_n\in\Z_+^2$ that satisfies \eqref{eq:vn15}, 
%\be \label{eq:grad:coc} 
%							B^{\xi}(\what,x,y) = \lim_{n\to \infty} \big( \Gpp_{x, v_n}(\w) - \Gpp_{y, v_n}(\w)  \big). 
%						\ee 
% 
%\item \label{cc:kS} 
(ii) If $\gpp$ is assumed differentiable at the endpoints of all its linear segments,  then all cocycles $B^\xi_\pm(x,y)$ are in fact  functions of $\w$, that is, $\kS$-measurable (see Theorem~5.3 in \cite{Geo-Ras-Sep-15a-}).  
%\end{enumerate} 
\end{remark}

\section{Directional geodesics}
\label{sec:geod}

This section proves the results on geodesics.  We define special geodesics in terms of   the cocycles $B^{\xi}_{\pm}$ from  Theorem \ref{th:cocycles},   on the extended space $\Ombig=\Omega\times\Omega'$.   Assumption \eqref{2d-ass} is in force.      The idea   is in the next  lemma, followed by  the definition of cocycle geodesics.

\begin{lemma}\label{lm:grad flow}
Fix $\w\in\Omega$.  Assume a function  $B:\Z^2\times\Z^2\to\R$ satisfies 
	\[B(x,y)+B(y,z)=B(x,z)\quad\text{and}\quad\w_x=B(x,x+e_1)\wedge B(x,x+e_2) \quad\forall \, x,y,z\in\Z^2.\]
%for all $x,y,z\in\Z^2$.
%Definition \ref{def:cK}\eqref{def:cK:coc} and assume it recovers weights $\w$ as in \ref{def:bdry-model}. 
%Fix $\what$ so that properties \eqref{def:cK:stat}--\eqref{def:cK:coc}   of Definition \ref{def:cK} and \eqref{eq:VB} hold.  

\smallskip

\begin{enumerate}[\ \ \rm(a)]
\item\label{lm:grad flow:a}  Let $x_{m,n}=(x_k)_{k=m}^n$ be any up-right path  that  follows  minimal  gradients of $B$, that is, 
%\begin{align}\label{eq:grad flow}
\[\w_{x_k}=B(x_k,x_{k+1}) \qquad\text{for all $m\le k<n$.}\]
  Then $x_{m,n}$ is a geodesic from $x_m$ to $x_n$:
	\be\label{g:GB} \Gpp_{x_m,x_n}(\w)=\sum_{k=m}^{n-1}\w_{x_k}=B(x_m,x_n).\ee
\item\label{lm:grad flow:b}
Let $x_{m,n}=(x_k)_{k=m}^n$ be an up-right path such that for all $m\le k<n$  
\begin{align*}%\label{eq:grad flow}
\begin{split}
\text{either}\quad\w_{x_k}&=B(x_k,x_{k+1})<B(x_k,x_k+e_1)\vee B(x_k,x_k+e_2)\\
\text{or}\quad x_{k+1}&=x_k+e_2 \, \text{ and } \, B(x_k,x_k+e_1)=B(x_k,x_k+e_2).
\end{split}
\end{align*}
In other words, path $x_{m,n}$ follows   minimal gradients of $B$    and  takes an $e_2$-step in a tie. 
Then $x_{m,n}$ is the leftmost geodesic from $x_m$ to $x_n$.  Precisely,  if $x'_{m,n}$ is an up-right path from $x'_m=x_m$ to $x'_n=x_n$ and $\Gpp_{x_m,x_n}=\sum_{k=m}^{n-1}\w_{x'_k}$,
then $x_k\cdot e_1\le x'_k\cdot e_1$ for all $m\le k\le n$.

If ties are broken by   $e_1$-steps the resulting geodesic is   the rightmost geodesic between $x_m$ and $x_n$:
$x_k\cdot e_1\ge x'_k\cdot e_1$ for all $m\le k<n$.
\end{enumerate}
\end{lemma}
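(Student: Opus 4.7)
The plan is to reduce both parts to a telescoping identity for $B$, combined with the recovery hypothesis $\w_x=B(x,x+e_1)\wedge B(x,x+e_2)$.

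For part \eqref{lm:grad flow:a}, I would start from any admissible path $y_{m,n}$ from $x_m$ to $x_n$. The recovery hypothesis applied at $y_k$ gives $\w_{y_k}\le B(y_k,y_{k+1})$ since $y_{k+1}-y_k\in\{e_1,e_2\}$. Summing over $k$ and using additivity, the right-hand side telescopes to $B(y_m,y_n)=B(x_m,x_n)$. For the distinguished path $x_{m,n}$ the hypothesis $\w_{x_k}=B(x_k,x_{k+1})$ turns every inequality into an equality, so the same telescoping yields $\sum_{k=m}^{n-1}\w_{x_k}=B(x_m,x_n)$. Comparing the two displays gives \eqref{g:GB} and shows that $x_{m,n}$ attains the maximum in the definition of $\Gpp_{x_m,x_n}$.

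For part \eqref{lm:grad flow:b}, the first observation is that the argument of (a) forces every geodesic $x'_{m,n}$ between $x_m$ and $x_n$ to follow minimal gradients of $B$: if any of the inequalities $\w_{x'_k}\le B(x'_k,x'_{k+1})$ were strict, the telescoped sum would fall short of $B(x_m,x_n)=\Gpp_{x_m,x_n}$. Thus at every site $x'_k$ the competing geodesic also selects a neighbor achieving $\min_i B(x'_k,x'_k+e_i)$. I would then prove $x_k\cdot e_1\le x'_k\cdot e_1$ by induction on $k$, with equality at the base $k=m$. A crucial enabling fact is that both paths have length $n-m$, so they always lie on a common antidiagonal; in particular $x_k\cdot e_1<x'_k\cdot e_1$ implies $x'_k\cdot e_1-x_k\cdot e_1\ge 1$, which automatically preserves the inequality at step $k+1$ regardless of which admissible step each path takes. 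In the equality case $x_k=x'_k$, I would split on whether the minimum is attained uniquely at $x_k$: if $B(x_k,x_k+e_1)\ne B(x_k,x_k+e_2)$ then both paths are forced to take the same step; if $B(x_k,x_k+e_1)=B(x_k,x_k+e_2)$, the tie-breaking rule gives $x_{k+1}=x_k+e_2$, so $x_{k+1}\cdot e_1=x_k\cdot e_1\le x'_{k+1}\cdot e_1$ irrespective of which step $x'$ takes. The rightmost statement follows by reversing the tie-breaking rule, or equivalently by swapping the roles of $e_1$ and $e_2$.

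I do not foresee any genuine obstacle: the proof is a clean combination of telescoping via additivity, the recovery identity, and a two-case induction. The one subtlety worth emphasizing is the antidiagonal observation, since without it the induction in the strict-inequality case would appear to lose a unit each step.
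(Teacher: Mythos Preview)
Your proof is correct and follows essentially the same approach as the paper. Part~\eqref{lm:grad flow:a} is identical (telescoping via additivity plus the recovery hypothesis); in part~\eqref{lm:grad flow:b} the paper runs the same induction but, instead of first isolating your lemma that every geodesic must follow minimal $B$-gradients, it argues directly at the critical step that if $x_k=x'_k$ and $x_{k+1}=x_k+e_1$ then $\w_{x_k}+\Gpp_{x_k+e_2,x_n}<B(x_k,x_n)=\Gpp_{x_k,x_n}$, forcing $x'_{k+1}=x_k+e_1$.
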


\begin{proof}
Part \eqref{lm:grad flow:a}. Any up-right path $y_{m,n}$ from $y_m=x_m$ to $y_n=x_n$ satisfies 
	\[\sum_{k=m}^{n-1}\w_{y_k}\le \sum_{k=m}^{n-1}B(y_k,y_{k+1})=B(x_m,x_n)=\sum_{k=m}^{n-1}B(x_k,x_{k+1})=\sum_{k=m}^{n-1}\w_{x_k}.  \]

Part \eqref{lm:grad flow:b}. 
$x_{m,n}$ is a geodesic by part \eqref{lm:grad flow:a}. To prove that it is the leftmost geodesic 
assume $x'_k=x_k$ and $x_{k+1}=x_k+e_1$. Then $\w_{x_k}=B(x_k,x_k+e_1)<B(x_k,x_k+e_2)$.   Recovery of the weights gives    $\Gpp_{x,y}\le B(x,y)$ for all $x\le y$. 
Combined   with \eqref{g:GB},  
	\[\w_{x_k}+\Gpp_{x_k+e_2,x_n}<B(x_k,x_k+e_2)+B(x_k+e_2,x_n)=B(x_k,x_n)=\Gpp_{x_k,x_n}.\]
Hence also  $x'_{k+1}=x'_k+e_1$ and  the claim about being the leftmost geodesic is proved. The other claim is symmetric. \qed
\end{proof}

Next we define cocycle geodesics, that is, geodesics constructed by following minimal gradients of   cocycles $B^{\xi}_{\pm}$  constructed in Theorem \ref{th:cocycles}.   Since our treatment allows discrete distributions, we introduce a    function $\t$ on $\Z^2$   to resolve ties.   
For $\xi\in\ri\Uset$, $u\in\Z^2$, and $\t\in\{e_1,e_2\}^{\Z^2}$, let $x^{u,\t,\xi,\pm}_{0,\infty}$ be the up-right path (one path for $+$, one for $-$) starting at $x^{u,\t,\xi,\pm}_0=u$ and satisfying for all $n\ge0$
	\[x^{u,\t,\xi,\pm}_{n+1}
	=\begin{cases}
	x^{u,\t,\xi,\pm}_n+e_1&\text{if }%\w_{x^{u,\t,\xi\pm}_n}=
	B^{\xi}_{\pm}(x^{u,\t,\xi,\pm}_n,x^{u,\t,\xi,\pm}_n+e_1)\\[2pt]
	       & \qquad\qquad  <B^{\xi}_{\pm}(x^{u,\t,\xi,\pm}_n,x^{u,\t,\xi,\pm}_n+e_2),\\[5pt] 
	x^{u,\t,\xi,\pm}_n+e_2&\text{if }%\w_{x^{u,\t,\xi\pm}_n}=
	B^{\xi}_{\pm}(x^{u,\t,\xi,\pm}_n,x^{u,\t,\xi,\pm}_n+e_2)\\[2pt]
	       & \qquad\qquad <B^{\xi}_{\pm}(x^{u,\t,\xi,\pm}_n,x^{u,\t,\xi,\pm}_n+e_1),\\[5pt]
	x^{u,\t,\xi,\pm}_n+\t(x^{u,\t,\xi,\pm}_n)&\text{if }%\w_{x^{u,\t,\xi\pm}_n}=
	B^{\xi}_{\pm}(x^{u,\t,\xi,\pm}_n,x^{u,\t,\xi,\pm}_n+e_1)\\[2pt]
	       & \qquad\qquad =B^{\xi}_{\pm}(x^{u,\t,\xi,\pm}_n,x^{u,\t,\xi,\pm}_n+e_2).
	\end{cases}\]
Cocycles 	
  $B^{\xi}_{\pm}$  satisfy  $\w_x=B^{\xi}_{\pm}(\what, x,x+e_1)\wedge B^{\xi}_{\pm}(\what, x,x+e_2)$   % from Theorem \ref{th:cocycles} recover $V(\what)=\w_0$ 
  (Theorem \ref{th:cocycles}(\ref{th:cocycles:exist})) and so  by Lemma \ref{lm:grad flow}\eqref{lm:grad flow:a}, $x^{u,\t,\xi,\pm}_{0,\infty}$ is a semi-infinite geodesic.  
  
Through the cocycles  these geodesics are measurable functions on $\Ombig$.  
 %  They are  are defined simultaneously for all $u$, $\xi$, and $\t$ on an event $\Ombig_0$ of full $\Pbig$-probability.  
    If $\gpp$ is differentiable at the endpoints of its linear segments (if any),  cocycles   $B^{\xi}_{\pm}$ are  functions of $\w$ (Theorem~5.3 in \cite{Geo-Ras-Sep-15a-}).  Then   
 %  $\kS$-measurable and hence so are all cocycle geodesics $x^{u,\t,\xi,\pm}_{0,\infty}$.    In other words,  the 
 geodesics $x^{u,\t,\xi,\pm}_{0,\infty}$ can be defined on $\Omega$ without the artificial extension to the space $\Ombig=\Omega\times\Omega'$.

If we restrict ourselves to the event $\Ombig_0$ of full $\Pbig$-measure on which monotonicity 
  \eqref{eq:monotone} holds for all $\xi,\zeta\in\ri\Uset$,  we can order  these geodesics in a  natural way from left to right.  
Define a partial ordering 
on $\{e_1,e_2\}^{\Z^2}$ by   $e_2\preceq e_1$   %, $i\in\{1,2\}$, 
and then $\t\preceq\t'$   coordinatewise.   Then  on the event $\Ombig_0$,  %for $\Pbig$-almost every  $\what$, 
for any  $u\in\Z^2$, $\t\preceq\t'$,  
$\xi,\zeta\in\ri\Uset$ with $\xi\cdot e_1<\zeta\cdot e_1$, and for all $n\ge0$,
\be \label{eq:geod-order}	\begin{aligned}
	%\begin{split}
	 x_n^{u,\t,\xi,\pm}\cdot e_1\le x_n^{u,\t',\xi,\pm}\cdot e_1,\ \ %\\
	 &x_n^{u,\t,\xi,-}\cdot e_1\le x_n^{u,\t,\xi,+}\cdot e_1, \\
	 \text{and }\ 
	 &x_n^{u,\t,\xi,+}\cdot e_1\le x_n^{u,\t,\zeta,-}\cdot e_1.
	 %\end{split}
	\end{aligned} \ee
	
  The  leftmost and rightmost tie-breaking rules are   $\tmin(x)=e_2$ and $\tmax(x)=e_1$  $\forall x\in\Z^2$. 
 The cocycle limits \eqref{eq:cont} and \eqref{eq:cont-left} force the cocycle geodesics to converge also, as the next lemma shows. 

\begin{lemma}\label{lm:cont geod}
%Same assumptions as in Theorem \ref{th:cocycles}. 
Fix $\xi$ and let  $\zeta_n\to\xi$ in $\ri\Uset$.  
If  %$\zeta_n\cdot e_1\searrow\xi\cdot e_1$
$\zeta_n\cdot e_1>\xi\cdot e_1$  $\forall n$  then for all $u\in\Z^2$ 
	\begin{align}\label{eq:cont geod}
	\Pbig\{\forall k\ge0\;\exists n_0<\infty:n\ge n_0\,\Rightarrow \,  x_{0,k}^{u,\,\tmax,\,\zeta_n,\pm}=x_{0,k}^{u,\,\tmax,\,\xi,+}\}=1.
	\end{align}
Similarly,  if $\zeta_n\cdot e_1\nearrow\xi\cdot e_1$,  we have  the almost sure   coordinatewise limit   $x_{0,\infty}^{u,\,\tmin,\,\zeta_n,\pm} \to x_{0,\infty}^{u,\,\tmin,\,\xi,-}$.    
%\textcolor{nicos-red}{(I seem to recall a debate for the bar on the $\frak{t}$ being below, when the geodesic is actually the top one.. But can't remember the actual reason right now and its slightly weird. Maybe we can even explain it to the reader too?)}
\end{lemma}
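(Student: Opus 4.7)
The plan is to prove both statements by induction on the path length, using the monotonicity \eqref{eq:monotone} of cocycles in the direction parameter together with the one-sided continuity statements \eqref{eq:cont} and \eqref{eq:cont-left} from Theorem~\ref{th:cocycles}. I work on the full-$\Pbig$-measure event $\Ombig_0$ of Theorem~\ref{th:cocycles}\eqref{th:cocycles:cont}, intersected with $\Ombig^{(\xi)}$ from part~\eqref{th:cocycle:cont-left} when the second claim is at issue. The base case $k=0$ is immediate since all paths start at $u$. For the inductive step suppose that, for $n \ge n_k$, the paths $x_{0,k}^{u,\tmax,\zeta_n,\pm}$ and $x_{0,k}^{u,\tmax,\xi,+}$ agree, and call their common step-$k$ site $x$; the task is to produce $n_{k+1} \ge n_k$ past which the next step matches as well.

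At $x$ the step is determined by the sign of the $B$-increment difference in the $e_1$ and $e_2$ directions. I would split into three cases according to the sign of $\Delta = B^\xi_+(x,x+e_1) - B^\xi_+(x,x+e_2)$. If $\Delta < 0$, monotonicity \eqref{eq:monotone} applied to $\zeta_n \cdot e_1 > \xi \cdot e_1$ yields
\[B^{\zeta_n}_\pm(x,x+e_1) \le B^\xi_+(x,x+e_1) < B^\xi_+(x,x+e_2) \le B^{\zeta_n}_\pm(x,x+e_2),\]
so the $\zeta_n$-geodesic also steps $e_1$ without any requirement on $n$. If $\Delta > 0$, right continuity \eqref{eq:cont} transfers the strict inequality to all large $n$ and forces an $e_2$ step. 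If $\Delta = 0$ (a tie that $\tmax$ resolves to $e_1$), the same monotonicity inequalities give $B^{\zeta_n}_\pm(x,x+e_1) \le B^{\zeta_n}_\pm(x,x+e_2)$, so the $\zeta_n$-geodesic either strictly prefers $e_1$ or is at a tie which $\tmax$ again resolves to $e_1$. In each case the step matches for all sufficiently large $n$, closing the induction and establishing \eqref{eq:cont geod}.

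The second claim is symmetric: with $\zeta_n \cdot e_1 \nearrow \xi \cdot e_1$ the monotonicity inequalities reverse, left continuity \eqref{eq:cont-left} replaces right continuity, and the tie rule $\tmin$ pairs correctly with the reversed monotonicity since it resolves ties to $e_2$, exactly the side on which the $\zeta_n$-increments are now sandwiched. Coordinatewise convergence follows. The only substantive point to flag, and what I expect to be the main conceptual obstacle to state cleanly, is the compatibility between tie-breaking and the direction of approach: $\tmax$ must be paired with approach from the right and $\tmin$ with approach from the left, because monotonicity then places the $\zeta_n$-increments on the side of the limiting tied values that forces the same step selection. With the opposite pairing the argument fails at tie sites, which is precisely why the lemma is stated only for these matched pairings.
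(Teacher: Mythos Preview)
Your proof is correct and follows essentially the same approach as the paper's: both split the analysis at each site $x$ according to whether $B^\xi_+(x,x+e_1)-B^\xi_+(x,x+e_2)$ is nonzero (handled by continuity \eqref{eq:cont}) or zero (handled by monotonicity \eqref{eq:monotone} paired with the appropriate tie-breaker). Your inductive phrasing and your observation that in the $\Delta<0$ case monotonicity alone suffices (no need to wait for large $n$) are minor refinements, but the core argument is the same.
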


\begin{proof}
It is enough to prove the statement for $u=0$. By  \eqref{eq:cont} and \eqref{eq:cont-left}, for a given $k$ and  large enough $n$,  if $x\ge0$ with $\abs{x}_1\le k$ and
$B^{\xi}_{+}(x,x+e_1)\ne B^{\xi}_{+}(x,x+e_2)$, then $B^{\zeta_n}_{\pm}(x,x+e_1)-B^{\zeta_n}_{\pm}(x,x+e_2)$ does not vanish and has the same sign as $B^{\xi}_{+}(x,x+e_1)-B^{\xi}_{+}(x,x+e_2)$.
From such  $x$ geodesics following the minimal gradient of $B^{\zeta_n}_{\pm}$ or the minimal gradient of $B^{\xi}_{+}$ stay together for their next step.
On the other hand, when $B^{\xi}_{+}(x,x+e_1)= B^{\xi}_{+}(x,x+e_2)$, monotonicity \eqref{eq:monotone} implies 
	\[B^{\zeta_n}_{\pm}(x,x+e_1)\le B^{\xi}_{+}(x,x+e_1)=B^{\xi}_{+}(x,x+e_2)\le B^{\zeta_n}_{\pm}(x,x+e_2).\]
Once again, both the geodesic following the minimal gradient of $B^{\zeta_n}_{\pm}$ and rules $\tmax$ and the one following the minimal gradients of $B^{\xi}_{+}$ and rules $\tmax$ will next take 
the same $e_1$-step. This proves \eqref{eq:cont geod}. The other claim is similar. \qed
\end{proof}

Recall the segments  $\Uset_\xi$,  $\Uset_{\xi\pm}$ in $\ri\Uset$ defined in \eqref{eq:sector1}--\eqref{eq:sector2} for $\xi\in\ri\Uset$.    
%The endpoints  of   $\Uset_\xi=[\ximin,  \ximax]$   are  given  by
%	\begin{align*}
%	&\ximax\cdot e_1=\sup\{\alpha:(\alpha,1-\alpha)\in\Uset_{\xi+}\}\quad\text{and}\quad\ximin\cdot e_1=\inf\{\alpha:(\alpha,1-\alpha)\in\Uset_{\xi-}\}.
%	\end{align*}
% 
The next theorem concerns the direction of  cocycle  geodesics.

\begin{theorem}\label{th:B-geod:direction}
There exists an event $\Ombig_0$ such that $\Pbig(\Ombig_0)=1$ and for each $\what\in\Ombig_0$ the following holds: 
%Same assumptions as in Theorem \ref{th:cocycles}. 
 	\be\label{g:77}  \forall\xi\in\ri\Uset, \, \forall\t\in\{e_1,e_2\}^{\Z^2}, \, \forall u\in\Z^2:x^{u,\t,\xi,\pm}_{0,\infty}\text{ is $\Uset_{\xi\pm}$-directed}.\ee
%	\be\label{g:77} \Pbig\Big\{\forall\xi\in\ri\Uset,\forall\t\in\{e_1,e_2\}^{\Z^2},\forall u\in\Z^2:x^{u,\t,\xi,\pm}_{0,\infty}\text{ is $\Uset_{\xi\pm}$-directed}\Big\}=1.\ee
	For  $\xi\in\Diff$  the $\pm$ is immaterial in the statement.  \end{theorem}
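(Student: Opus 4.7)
The plan is to match two independent asymptotic laws along any cocycle geodesic: the shape theorem \eqref{lln5} applied to the last-passage value, and an ergodic theorem for the stationary $L^1$ cocycle $B^\xi_\pm$ (Appendix \ref{app}). Two easy reductions tame the uncountable quantifications in \eqref{g:77}. First, translation invariance of $\Pbig$ (Theorem \ref{th:cocycles}\eqref{th:cocycles:indep}) together with a countable union over $u \in \Z^2$ reduces the claim to $u=0$. Second, since $\tmin \preceq \t \preceq \tmax$ in the partial order of $\{e_1,e_2\}^{\Z^2}$, the geodesic monotonicity \eqref{eq:geod-order} sandwiches the $e_1$-coordinate of $x_n^{0,\t,\xi,\pm}$ between those of $x_n^{0,\tmin,\xi,\pm}$ and $x_n^{0,\tmax,\xi,\pm}$; as $\Uset$ is parametrized by its $e_1$-coordinate and $\Uset_{\xi\pm}$ is an interval, it suffices to treat $\t\in\{\tmin,\tmax\}$.

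For the core step, fix such a triple $(\xi,\pm,\t)$ and write $x_n=x^{0,\t,\xi,\pm}_n$. Lemma \ref{lm:grad flow}\eqref{lm:grad flow:a} combined with the weight-recovery identity \eqref{Bw-9} yields
\[
\Gpp_{0,x_n}(\w)\;=\;\sum_{k=0}^{n-1}\w_{x_k}\;=\;B^\xi_\pm(\what,0,x_n).
\]
Since $x_n/n\in\Uset$ lies in a compact set, every subsequence admits a further subsequence with $x_{n_j}/n_j\to\zeta$ for some $\zeta\in\Uset$. On such a subsequence the shape theorem \eqref{lln5} together with $1$-homogeneity of $\gpp$ delivers $n_j^{-1}\Gpp_{0,x_{n_j}}\to\gpp(\zeta)$, while the cocycle ergodic theorem (Appendix \ref{app}) combined with the mean identity \eqref{eq:h=grad} delivers $n_j^{-1}B^\xi_\pm(\what,0,x_{n_j})\to\nabla\gpp(\xi\pm)\cdot\zeta$. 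Equating the two limits and invoking Euler's identity \eqref{euler} yields $\gpp(\zeta)-\gpp(\xi)=\nabla\gpp(\xi\pm)\cdot(\zeta-\xi)$, which is precisely the defining relation \eqref{eq:sector1} for $\zeta\in\Uset_{\xi\pm}$. A short tangent-line argument using the boundary asymptotics \eqref{eq:g-asym} rules out $\zeta\in\partial\Uset$: restricted to $\Uset$ the function $\gpp$ has infinite one-sided slopes at $e_1$ and $e_2$, so no finite-slope supporting line through an interior $\xi$ can touch $\gpp$ at the boundary.

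To obtain the conclusion simultaneously for all $\xi\in\ri\Uset$, run the previous step on the countable collection $D\times\{+,-\}\times\{\tmin,\tmax\}$, where $D\subset\ri\Uset$ is a countable dense subset, and intersect to get a single full-measure event $\Ombig_0$. For $\xi\in\ri\Uset\setminus D$, choose $\zeta_k,\zeta'_k\in D$ with $\zeta_k\cdot e_1\searrow\xi\cdot e_1$ and $\zeta'_k\cdot e_1\nearrow\xi\cdot e_1$; the monotonicity \eqref{eq:geod-order} then sandwiches $x^{0,\tmax,\xi,\pm}_n\cdot e_1$ between $x^{0,\tmax,\zeta'_k,\pm}_n\cdot e_1$ and $x^{0,\tmax,\zeta_k,\pm}_n\cdot e_1$. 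Passing to $\varlimsup_n$ and then letting $k\to\infty$, using the one-sided continuity \eqref{nabla-g-lim} of $\nabla\gpp$ to collapse $\Uset_{\zeta_k\pm}$ and $\Uset_{\zeta'_k\pm}$ onto $\Uset_{\xi\pm}$, finishes the $\tmax$ case; the $\tmin$ case is symmetric. Finally, when $\xi\in\Diff$, Theorem \ref{th:cocycles}\eqref{th:cocycles:flat} forces $B^\xi_+=B^\xi_-$ and hence coincidence of the corresponding cocycle geodesics, making the $\pm$ immaterial. The main obstacle is this last step: upgrading from countably many $\xi$ to every $\xi$ simultaneously while respecting the two-sided behaviour of $\nabla\gpp(\xi\pm)$ at corners of $\gpp$, which is where the monotonicity-and-continuity structure of Theorem \ref{th:cocycles}\eqref{th:cocycles:cont}--\eqref{th:cocycle:cont-left} is crucially invoked.
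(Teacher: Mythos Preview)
Your approach mirrors the paper's: equate the shape-theorem limit with the cocycle ergodic theorem (Theorem~\ref{th:Atilla}) along a cocycle geodesic to force every subsequential direction into $\Uset_{\xi\pm}$, then pass from a countable family of directions to all $\xi$ via the monotonicity \eqref{eq:geod-order}. The core argument for a fixed $\xi$ is correct and cleanly stated.

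There is, however, a real gap in the density step. If your countable set $D$ is merely dense and some corner $\xi\in(\ri\Uset)\smallsetminus\Diff$ falls outside $D$, the sandwich does not deliver the sharp conclusion. For $\zeta'_k\cdot e_1\nearrow\xi\cdot e_1$, the one-sided continuity \eqref{nabla-g-lim} gives $\nabla\gpp(\zeta'_k+)\to\nabla\gpp(\xi-)$, so the \emph{lower} bound you extract from $x^{0,\tmax,\zeta'_k,+}$ collapses onto the left endpoint $\ximin$ of $\Uset_{\xi-}$, not onto the left endpoint $\xi$ of $\Uset_{\xi+}$. Your sandwich therefore only shows that $x^{0,\tmax,\xi,+}$ is $\Uset_\xi$-directed, which is strictly weaker than $\Uset_{\xi+}$-directed when $\xi\notin\Diff$. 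The paper circumvents this by choosing its countable set $\Uset_0$ to contain \emph{all} nondifferentiability points of $\gpp$ (there are at most countably many) together with a dense subset of $\Diff$; then every $\xi\notin\Uset_0$ lies in $\Diff$, so $\Uset_{\xi+}=\Uset_{\xi-}=\Uset_\xi$ and the sandwich is enough. With this one-line amendment to the definition of $D$, your argument is complete.

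A secondary point: the claim that $\Uset_{\zeta_k-}$ ``collapses onto'' $\Uset_{\xi+}$ as $\zeta_k\cdot e_1\searrow\xi\cdot e_1$ needs a short case analysis. When $\xi\cdot e_1<\ximax\cdot e_1$ one may pick $\zeta_k$ inside $]\xi,\ximax[$ so that $\Uset_{\zeta_k}=\Uset_{\xi+}$ exactly; when $\xi=\ximax$ one must argue that $\zeta_k$ can be chosen so that the right endpoint of $\Uset_{\zeta_k}$ is within $\e$ of $\xi$, which uses that $\nabla\gpp(\zeta_k)\to\nabla\gpp(\xi+)$ but never equals it. The paper spells this out; your phrase ``collapse'' is correct in spirit but hides this.
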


\begin{proof}
Fix $\xi\in\ri\Uset$ and abbreviate $x_n=x^{u,\tmax,\xi,+}_n$.    
$\Gpp_{u,x_n}=B^{\xi}_{+}(u,x_n)$ by Lemma \ref{lm:grad flow}\eqref{lm:grad flow:a}.  
Apply Theorem \ref{th:Atilla} with cocycle $B^{\xi}_{+}$ to write
	\[\lim_{n\to\infty}\abs{x_n}_1^{-1} ( \Gpp_{u,x_n} - \nabla\gpp(\xi+)\cdot x_n  ) =0\qquad\Pbig\text{-almost surely}.\]

Define $\zeta(\what)\in\Uset$ by $\zeta\cdot e_1 = \varlimsup \frac{x_n\cdot e_1}{\abs{x_n}_1}$. 
If $\zeta\cdot e_1 >  \ximax\cdot e_1 $ then $\zeta\not\in\Uset_{\xi+}$ and hence
	\[%\gpp(\zeta) +h_+(\xi)\cdot\zeta=
	\gpp(\zeta)  - \nabla \gpp(\xi+) \cdot\zeta  <  \gpp(\xi)  - \nabla\gpp(\xi+) \cdot\xi = 0.\]  
(The %first and last 
equality is from %\eqref{eq:h=grad} and 
\eqref{euler}.
For the inequality, concavity gives $\le$ and $\zeta\not\in\Uset_{\xi+}$ rules out equality.)

Consequently, by the shape theorem   \eqref{lln5}, 
%But then, taking a subsequence along which $x_n\cdot e_1/\abs{x_n}_1$ converges to $\zeta\cdot e_1$ and applying Theorem \ref{th:gppA}   we see that 
on the event $\{\zeta\cdot e_1>\ximax\cdot e_1\}$,   
\[  \varliminf_{n\to\infty}  \abs{x_n}_1^{-1} ( \Gpp_{u,x_n} -\nabla\gpp(\xi+)\cdot x_n ) <0.  \] 
This proves that 
\[\Pbig\Bigl\{\,\varlimsup_{n\to\infty}  \frac{x_n^{u,\tmax,\xi,+}\cdot e_1}{\abs{x_n^{u,\tmax,\xi,+}}_1}\le\ximax\cdot e_1\Bigr\}=1. \]
Repeat the same argument with $\tmax$ replaced by $\tmin$ and $\ximax$ by the other endpoint of 
$\Uset_{\xi+}$ (which is either $\xi$ or $\ximin$). To capture all $\t$ use   geodesics ordering \eqref{eq:geod-order}.   An analogous argument works for $\xi-$.  We have,  for a given $\xi$,  
	\be\label{g:78}  \Pbig\Bigl\{  \forall\t\in\{e_1,e_2\}^{\Z^2},\forall u\in\Z^2:x_{0,\infty}^{u,\t,\xi,\pm}\text{  is }\Uset_{\xi\pm}\text{-directed}\Big\}=1.\ee
 
Let $\Ombig_0$ be an event of full $\Pbig$-probability on which all cocycle  geodesics satisfy the ordering \eqref{eq:geod-order}, and   the event in \eqref{g:78} holds for both $+$ and $-$ and for
$\xi$ in  a countable set $\Uset_0$ that contains all points of nondifferentiability of $\gpp$ and a countable dense subset of $\Diff$.     We argue that   \eqref{g:77}  holds on $\Ombig_0$.  
%  We argue that   $\Ombig_0$  is contained in the event in \eqref{g:77}.  

Let $\zeta\notin\Uset_0$ and let $\zetamax$ denote the right endpoint of $\Uset_{\zeta}$.  We show that 
\be\label{g:79}   \varlimsup_{n\to\infty}  \frac{x_n^{u,\tmax,\zeta}\cdot e_1}{\abs{x_n^{u,\tmax,\zeta}}_1}\le\zetamax\cdot e_1 \qquad\text{on the event $\Ombig_0$.} \ee
(Since $\zeta\in\Diff$   there is no $\pm$ distinction in the  cocycle geodesic.)    The $\varliminf $ with $\tmin$ and $\ge \zetamin\cdot e_1$ comes of course with the same argument.  

If $\zeta\cdot e_1< \zetamax\cdot e_1$ pick  $\xi\in\Diff\cap\Uset_0$ so  that  $\zeta\cdot e_1< \xi\cdot e_1<\zetamax\cdot e_1$.    Then $\ximax=\zetamax$ and  \eqref{g:79} follows from the ordering.  

If $\zeta= \zetamax$, let $\e>0$ and  pick  $\xi\in\Diff\cap\Uset_0$ so  that  
$\zeta\cdot e_1< \xi\cdot e_1\le \ximax\cdot e_1<\zeta\cdot e_1+\e$.  This is possible because $\nabla\gpp(\xi)$ converges to but never equals $\nabla\gpp(\zeta)$  as  $\xi\cdot e_1\searrow \zeta\cdot e_1$.  Again by the ordering 
\[     \varlimsup_{n\to\infty}  \frac{x_n^{u,\tmax,\zeta}\cdot e_1}{\abs{x_n^{u,\tmax,\zeta}}_1}\le
\varlimsup_{n\to\infty}  \frac{x_n^{u,\tmax,\xi}\cdot e_1}{\abs{x_n^{u,\tmax,\xi}}_1}\le\ximax\cdot e_1 <\zeta\cdot e_1+\e. \] 
This completes the proof of Theorem \ref{th:B-geod:direction}.  \qed
 \end{proof}

%Given an up-right path $x_{0,\infty}$ with $\varliminf x_n\cdot e_1/\abs{x_n}_1>0$ and $\varlimsup x_n\cdot e_1/\abs{x_n}_1<1$ define $\ximax(x_{0,\infty})$ and
%$\ximin(x_{0,\infty})$ by
%	\begin{align*}
%	&\ximax(x_{0,\infty})\cdot e_1=\inf\Big\{\alpha:(\alpha,1-\alpha)\in\Uset_\xi\,,\,\xi\cdot e_1=\varlimsup\frac{x_n\cdot e_1}{\abs{x_n}_1}\Big\},\\
%	&\ximin(x_{0,\infty})\cdot e_1=\sup\Big\{\alpha:(\alpha,1-\alpha)\in\Uset_\xi\,,\,\xi\cdot e_1=\varliminf\frac{x_n\cdot e_1}{\abs{x_n}_1}\Big\}.
%	\end{align*}
%By Lemma \ref{gppa-lm} both points are in $\ri\Uset$.

%Possible lemma:   There exists an event $\Ombig_1$ of full $\Pbig$-probability such that on $\Ombig_0$ the following property holds for  every geodesic $x_{0,\infty}$ and all  cocycle geodesics $x^{x_0,\tmin,\xi\pm}$ and   $x^{x_0,\tmax,\xi\pm}$:   eventually   $x_{0,\infty}$   stays  (weakly) on one side of $x^{x_0,\tmin,\xi\pm}$ and on one side of   $x^{x_0,\tmax,\xi\pm}$.

\begin{lemma}\noindent\label{lm:sandwich}
\begin{enumerate}[\ \ \rm(a)]
\item \label{lm:sandwich:a}  Fix $\xi\in\ri\Uset$.  Then the  following  holds $\Pbig$-almost surely.  For any  semi-infinite geodesic $x_{0,\infty}$  
\begin{align}\label{eq:sandwich}
\varlimsup_{n\to\infty}\frac{x_n\cdot e_1}{\abs{x_n}_1} \ge \xi\cdot e_1 \quad\text{implies that}\quad 
		x_n\cdot e_1  \ge  x^{x_0,\,\tmin,\,\ximin,-}_n\cdot e_1 \quad\text{for all }n\ge0   
		\end{align}
and 
\begin{align}\label{eq:sandwich2}
\varliminf_{n\to\infty}\frac{x_n\cdot e_1}{\abs{x_n}_1} \le \xi\cdot e_1 \quad\text{implies that}\quad 
		x_n\cdot e_1  \le  x^{x_0,\,\tmax,\,\ximax,+}_n\cdot e_1 \quad\text{for all }n\ge0. 
		\end{align}
		
% such that $\xi$ is a limit point of  $x_n/\abs{x_n}_1$ 
%satisfies 
%	\begin{align}\label{eq:sandwich}
%		x^{x_0,\tmin,\ximin-}_n\cdot e_1\le x_n\cdot e_1\le x^{x_0,\tmax,\ximax+}_n\cdot e_1\quad\text{for all }n\ge0.
%	\end{align}
	
\item\label{lm:sandwich:b} Fix a maximal line segment $[\ximin, \ximax]$ on which $\gpp$ is linear and such that $\ximin\cdot e_1<\ximax\cdot e_1$.  Assume 	$\ximin, \ximax\in\Diff$.    Then the  following statement holds $\Pbig$-almost surely.  Any semi-infinite  geodesic $x_{0,\infty}$   such that   a limit point of  $x_n/\abs{x_n}_1$  lies in $[\ximin, \ximax]$
satisfies 
	\begin{align}\label{eq:sandwich4}
		x^{x_0,\tmin,\ximin}_n\cdot e_1\le x_n\cdot e_1\le x^{x_0,\tmax,\ximax}_n\cdot e_1\quad\text{for all }n\ge0.
	\end{align}
\end{enumerate}
\end{lemma}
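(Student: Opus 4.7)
My plan is to prove \eqref{lm:sandwich:a} and then obtain \eqref{lm:sandwich:b} as an immediate corollary. The reduction is clean: because $\ximin,\ximax\in\Diff$, Theorem \ref{th:cocycles}\eqref{th:cocycles:flat} collapses the $\pm$ cocycles at those directions, so the unsigned geodesics $x^{x_0,\tmin,\ximin}$ and $x^{x_0,\tmax,\ximax}$ coincide with $x^{x_0,\tmin,\ximin,-}$ and $x^{x_0,\tmax,\ximax,+}$. Under the hypothesis of part \eqref{lm:sandwich:b}, the geodesic $x_{0,\infty}$ has a cluster point in $[\ximin,\ximax]$, which in particular forces $\varlimsup_n x_n\cdot e_1/\abs{x_n}_1\ge\ximin\cdot e_1$ and $\varliminf_n x_n\cdot e_1/\abs{x_n}_1\le\ximax\cdot e_1$, so applying \eqref{eq:sandwich} with $\xi=\ximin$ and \eqref{eq:sandwich2} with $\xi=\ximax$ produces the two bounds in \eqref{eq:sandwich4}. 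The two halves of \eqref{lm:sandwich:a} are symmetric under swapping $e_1\leftrightarrow e_2$, $\tmin\leftrightarrow\tmax$, and $\ximin\leftrightarrow\ximax$, so I will describe only the proof of \eqref{eq:sandwich}.

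I will argue by contradiction. Writing $y_n=x^{x_0,\tmin,\ximin,-}_n$ and assuming $x_{n_1}\cdot e_1<y_{n_1}\cdot e_1$ at a first index $n_1$, a single-step increment check forces $x_{n_1-1}=y_{n_1-1}=:w$ with $y$ taking $e_1$ and $x$ taking $e_2$ out of $w$. The tie-breaker $\tmin$ prefers $e_2$, so $y$'s $e_1$-step must come from a strict inequality
\[
B^{\ximin}_-(\what,w,w+e_1)<B^{\ximin}_-(\what,w,w+e_2),
\]
while the geodesic splitting $\Gpp_{w,x_N}=\w_w+\Gpp_{w+e_2,x_N}\ge\w_w+\Gpp_{w+e_1,x_N}$ yields
\[
\Gpp_{w+e_2,x_N}\ge\Gpp_{w+e_1,x_N}\quad\text{for all }N\text{ with }x_N\ge w+e_1.
\]
Extract a subsequence $N_k\to\infty$ so that $x_{N_k}/\abs{x_{N_k}}_1\to\eta\in\Uset$ with $\eta\cdot e_1\ge\xi\cdot e_1\ge\ximin\cdot e_1$, and write $\Uset_\eta=[\etamin,\etamax]$. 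Since $\xi\in\ri\Uset$ forces $\eta\cdot e_1>0$, eventually $x_{N_k}\ge w+e_1$ so the $G$-inequality applies along the subsequence. Applying Theorem \ref{th:construction} at direction $\eta$ to the sequence $v_k=x_{N_k}$ (whose limit lies in $\Uset_\eta$), combined with the $G$-inequality rewritten as $\Gpp_{w,v_k}-\Gpp_{w+e_2,v_k}\le\Gpp_{w,v_k}-\Gpp_{w+e_1,v_k}$, gives
\[
B^{\etamin}_-(\what,w,w+e_2)\le B^{\etamin}_-(\what,w,w+e_1).
\]
A short case analysis on whether $\eta\in\Uset_\xi$ (in which case $\etamin=\ximin$) or $\Uset_\eta$ is a flat segment disjoint from $\Uset_\xi$ (in which case $\etamin\cdot e_1\ge\ximax\cdot e_1$) gives $\etamin\cdot e_1\ge\ximin\cdot e_1$, so monotonicity \eqref{eq:monotone} transports the strict cocycle inequality above to the reverse strict inequality $B^{\etamin}_-(\what,w,w+e_1)<B^{\etamin}_-(\what,w,w+e_2)$, which is the required contradiction.

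The main technical obstacle is that the $\Pbig$-full-measure event in Theorem \ref{th:construction} depends on the chosen direction, while the $\eta$ produced in my argument is $\what$-dependent. I would handle this along the lines of the proof of Theorem \ref{th:B-geod:direction}: fix in advance a countable $\Uset_0\subset\ri\Uset$ that is dense in $\Diff$ and contains a representative of every maximal linear segment of $\gpp$, intersect the theorem's events over $\xi\in\Uset_0$ to obtain a single full-measure event, and then propagate the required $G$-increment bounds from $\Uset_0$ to arbitrary $\eta$ using monotonicity \eqref{eq:monotone} together with the one-sided continuities \eqref{eq:cont}--\eqref{eq:cont-left} of the cocycle family.
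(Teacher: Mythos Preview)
Your reduction of part \eqref{lm:sandwich:b} to part \eqref{lm:sandwich:a} is correct and slightly slicker than the paper, which repeats the approximation argument. The symmetry reduction for \eqref{eq:sandwich}/\eqref{eq:sandwich2} is also fine.

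Your route to \eqref{eq:sandwich}, however, is genuinely different from the paper's and carries a gap that your last paragraph does not close. The difficulty is exactly the one you flag: Theorem~\ref{th:construction} gives its bounds only on a full-measure event that depends on the chosen direction, and your $\eta$ is $\what$-dependent. Your proposed fix---intersect over a countable $\Uset_0$ and propagate via monotonicity \eqref{eq:monotone} and the one-sided limits \eqref{eq:cont}--\eqref{eq:cont-left}---does not quite work as stated. The cocycle tools \eqref{eq:monotone}, \eqref{eq:cont}, \eqref{eq:cont-left} act only on the $B$-side; they do not let you transfer the $G$-increment bounds of Theorem~\ref{th:construction} from one direction to another. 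In particular, the left-continuity \eqref{eq:cont-left} is available only at a \emph{fixed} $\xi$, so you cannot invoke it at the random $\etamin$. What is actually needed to push your scheme through is the $G$-increment comparison Lemma~\ref{lm:new:comp0}: if $\eta\cdot e_1>\xi\cdot e_1$ pick $\eta'\in\Uset_0\cap\Diff$ with $\xi\cdot e_1<\eta'\cdot e_1<\eta\cdot e_1$, compare your $v_k$ to a deterministic sequence in direction $\eta'$ via Lemma~\ref{lm:new:comp0}, apply Theorem~\ref{th:construction} at $\eta'$, and then use \eqref{eq:monotone} with $\eta'_{\min}\cdot e_1\ge\ximin\cdot e_1$; if $\eta=\xi$ apply Theorem~\ref{th:construction} directly at the fixed $\xi$. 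This is doable but noticeably heavier than your sketch suggests.

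The paper bypasses the random-direction issue entirely by a more geometric argument. It fixes a deterministic sequence $\zeta_\ell\in\Diff$ with $\zeta_\ell\cdot e_1\nearrow\ximin\cdot e_1$ (so $\xi\notin\Uset_{\zeta_\ell}$), and works with the cocycle geodesics $x^{x_0,\tmin,\zeta_\ell}$. By Lemma~\ref{lm:grad flow}\eqref{lm:grad flow:b} each such path is the \emph{leftmost} geodesic between any two of its points; hence if it ever strays strictly to the right of $x_{0,\infty}$ the two paths can never touch again. But Theorem~\ref{th:B-geod:direction} makes $x^{x_0,\tmin,\zeta_\ell}$ $\Uset_{\zeta_\ell}$-directed, while $\varlimsup x_n\cdot e_1/n\ge\xi\cdot e_1$ forces $x_{0,\infty}$ to be strictly to the right infinitely often---a contradiction. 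Thus $x^{x_0,\tmin,\zeta_\ell}$ stays weakly left of $x_{0,\infty}$ for every $\ell$, and the limit $x^{x_0,\tmin,\zeta_\ell}\to x^{x_0,\tmin,\ximin,-}$ from Lemma~\ref{lm:cont geod} finishes. This avoids Theorem~\ref{th:construction} altogether and never introduces an $\what$-dependent direction.
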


%\begin{proof}
%Consider any stationary $L^1$ cocycle that recovers potential $V(\w)=\w_0$. Fix a starting point $u\in\Z^2$. 
%Denote by $x^{u,B,\tmin}$  the leftmost geodesic starting at $u$ and following the smallest gradient $B(x,x+e_i)$, $i=1,2$, at all points, 
%as in the statement of Lemma \ref{lm:grad flow}\eqref{lm:grad flow:b}. Then Lemma \ref{lm:grad flow}\eqref{lm:grad flow:b} impiles that 
% any other geodesic $x_{0,\infty}$ starting at $u$ has to stay either always to the right of $x^{u,B,\tmin}_{0,\infty}$ or always to the left of it, i.e.\ 
% either $x_n\cdot e_1\le x^{u,B,\tmin}_n\cdot\uhat$ for all $n\ge0$ or $x_n\cdot e_1\ge x^{u,B,\tmin}_n\cdot\uhat$ for all $n\ge0$.\qed
%\end{proof}

\begin{proof}  Part (a).  
We prove    \eqref{eq:sandwich}.  \eqref{eq:sandwich2} is proved similarly. 
  Fix a sequence 
 $\zeta_\ell\in\Diff$ such that   $\zeta_\ell\cdot e_1\nearrow \ximin\cdot e_1$ so that, in particular,  $\xi\not\in\Uset_{\zeta_\ell}$.     The good event of full $\Pbig$-probability is the one on which 
$x_{0,\infty}^{x_0,\tmin,\zeta_\ell}$ is $\Uset_{\zeta_\ell}$-directed (Theorem \ref{th:B-geod:direction}),    $x_{0,\infty}^{x_0,\tmin,\zeta_\ell}$ is the leftmost geodesic between any two of its points   (Lemma \ref{lm:grad flow}\eqref{lm:grad flow:b}  applied to cocycle $B^{\zeta_\ell}$) and  $x_{0,\infty}^{x_0,\tmin,\zeta_\ell}\to x^{x_0,\tmin,\ximin,-}_{0,\infty}$  (Lemma \ref{lm:cont geod}).  

 % Since  $x_{0,\infty}^{x_0,\tmin,\zeta_\ell}$ is the leftmost geodesic between any two its points,  
By the leftmost property,  if 
  $x_{0,\infty}^{x_0,\tmin,\zeta_\ell}$ ever goes strictly to the right of
$x_{0,\infty}$, these two geodesics  cannot touch   again at any later time.  
But   by virtue of the limit points,  $x_n^{x_0,\tmin,\zeta_\ell}\cdot e_1<x_n\cdot e_1$ for  infinitely many $n$.  
Hence  $x_{0,\infty}^{x_0,\tmin,\zeta_\ell}$ stays  weakly to the left of $x_{0,\infty}$.  Let $\ell\to\infty$.  
 
Part (b) is proved similarly.  The differentiability assumption implies that the geodesic $x_{0,\infty}^{x_0,\tmin,\ximin}$ can be approached from the left by geodesics $x_{0,\infty}^{x_0,\tmin,\zeta_\ell}$ such that  $\ximin\not\in\Uset_{\zeta_\ell}$.  \qed
\end{proof}

The next result concerns coalescence of cocycle geodesics $\{x^{u,\t,\xi,\pm}_{0,\infty}:u\in\Z^2\}$ for fixed $\t$, $\pm$,  and $\xi\in\ri\Uset$.    We can consider a random, stationary tie-breaking function $\t:\Ombig\times\Z^2\to\{e_1,e_2\}$ that satisfies 
\be\label{stat-t}   \t(\what, x)=\t(T_x\what, 0) \qquad\text{$\forall x\in\Z^2$, $\Pbig$-almost surely.}  \ee 

\begin{theorem}\label{th:geod:coal:t}
  Fix a tie-breaking function $\t$ that satisfies \eqref{stat-t} and fix $\xi\in\ri\Uset$. 
Then $\Pbig$-almost surely, for all $u,v\in\Z^2$, there exist $n,m\ge0$ such that
$x^{u,\t,\xi,-}_{n,\infty}=x^{v,\t,\xi,-}_{m,\infty}$, with a similar statement for $+$. 
\end{theorem}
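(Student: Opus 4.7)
\medskip

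\noindent\emph{Proof plan.}
The strategy adapts the classical Licea--Newman coalescence argument \cite{Lic-New-96} to the setting of cocycle geodesics. Because both the cocycle $B^\xi_-$ and the tie-breaker $\t$ are stationary under $T_x$, the geodesic $x^{u,\t,\xi,-}_{0,\infty}$ depends on $u$ in a translation-covariant manner: $x^{u,\t,\xi,-}_{k}(\what) = u + x^{0,\t,\xi,-}_{k}(T_u\what)$. The structural fact driving the argument is that two cocycle geodesics which share any common site $x$ coincide from $x$ onward, since the next edge is a deterministic function of $B^\xi_-(\what, x, x+e_i)_{i=1,2}$ and $\t(T_x\what, 0)$. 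Thus coalescence is the same as having a common site.

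Step 1 (Reduction to nearest neighbors). By translation invariance, it suffices to show that
\[ \Pbig\bigl\{x^{0,\t,\xi,-}_{0,\infty}\text{ and } x^{e_i,\t,\xi,-}_{0,\infty}\text{ share a site}\bigr\}=1\quad\text{for } i=1,2. \]
Given this, for any $u,v\in\Z^2$, join them by a finite lattice path; pairwise coalescence of consecutive geodesics together with transitivity of coalescence (from the ``coincide after meeting'' property) yields coalescence of the $u$- and $v$-geodesics.

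Step 2 (Setup for contradiction). Suppose $p:=\Pbig\{x^{0,\t,\xi,-}\text{ and }x^{e_1,\t,\xi,-}\text{ never meet}\}>0$. Define $F_k(\what)=\ind\{x^{ke_1,\t,\xi,-}\text{ and }x^{(k+1)e_1,\t,\xi,-}\text{ never meet}\}$. By $T_{e_1}$-ergodicity of the stationary process in Theorem~\ref{th:cocycles}\eqref{th:cocycles:indep}, $\Pbig$-a.s.\ we have $n^{-1}\sum_{k=0}^{n-1}F_k\to p$. Each $k$ with $F_k=1$ creates a break between coalescence classes, so among the cocycle geodesics starting from $\{0,e_1,\dotsc,ne_1\}$ there are at least $K_n\ge (p-o(1))\,n$ distinct coalescence classes. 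Picking one representative per class yields pairwise disjoint semi-infinite paths $\gamma^{(1)},\dotsc,\gamma^{(K_n)}$, each $\Uset_{\xi-}$-directed by Theorem~\ref{th:B-geod:direction}, and each starting in $\{0,\dotsc,n\}e_1$. Monotonicity from the ordering \eqref{eq:geod-order} applied to a fixed $\t$ and fixed $\xi$ gives that they are ordered left-to-right without crossing.

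Step 3 (Counting contradiction). The aim is to exhibit a deterministic upper bound of the form $K_n\le o(n)$, contradicting $K_n\gtrsim pn$. The classical Licea--Newman route is a ``simultaneous modification'' argument: one localizes the $K_n$ disjoint geodesic starts to a large but finite box $R_n=[0,n]\times[0,M_n]$, uses the independence structure in Theorem~\ref{th:cocycles}\eqref{th:cocycles:indep} (which makes the cocycles on the lower/left boundary independent of the bulk weights inside $R_n$) to argue that with uniformly positive probability one can re-randomize the weights inside $R_n$ so as to force all $K_n$ geodesics to merge before exiting $R_n$. Comparing the expected number of pre-exit coalescences under the original and modified laws produces the desired contradiction when $K_n$ is of order $n$.

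Main obstacle. The delicate point is Step 3: without quantitative curvature of $\gpp$ one cannot cone-bunch the $\gamma^{(j)}$ at a single antidiagonal, so the naive counting fails. The fix is the Burton--Keane / modification approach sketched above, which only uses stationarity, ergodicity, and the ``coincide after meeting'' property, together with the fact that the cocycles on the south-west boundary of $R_n$ are independent of the bulk $\{\w_x:x\in R_n^\circ\}$. Implementing this for cocycle geodesics (rather than passage-time geodesics) is the principal technical step; the tie-breaker $\t$ plays only a passive role because, being stationary, it does not disturb the probabilistic estimates. Once the $e_1$-case is established, the $e_2$-case is identical and Step~1 completes the proof.
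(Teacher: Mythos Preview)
Your Steps~1--2 are fine and standard.  Step~3, however, is where the argument lives, and as written it is a sketch that does not match the actual Licea--Newman mechanism and contains a concrete error.

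First, the independence statement in Theorem~\ref{th:cocycles}\eqref{th:cocycles:indep} says that the cocycles $\{B^\xi_-(x,x+e_i):x\in I\}$ are independent of the weights in $I^<$, i.e.\ the weights \emph{behind} $I$.  You claim the cocycles on the south-west boundary of $R_n$ are independent of the bulk weights of $R_n$, but the bulk lies in the forward cone of the SW boundary, not in $I^<$.  The correct use is the other way around: one fixes the cocycles on the \emph{north-east} side and modifies weights behind them.

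Second, and more importantly, your proposed modification goes in the wrong direction.  The Licea--Newman argument (and the paper's proof) does not try to force many geodesics to merge inside a box.  It starts from the observation that if two cocycle geodesics fail to coalesce with positive probability, then by stationarity one can find, with positive probability, \emph{three} pairwise disjoint cocycle geodesics from nearby starting points.  A local modification of the weights in a finite box around the starting points (using the independence of the cocycles ahead of the box from the weights inside it) turns the \emph{middle} geodesic into one that is shielded by the outer two: no cocycle geodesic emanating from outside a fixed neighborhood can ever touch it, because such a geodesic would first have to meet one of the outer shields and hence merge with it.  Stationarity and ergodicity then yield order $L^2$ such shielded geodesics starting inside an $L\times L$ square; being pairwise disjoint, they must exit through distinct boundary sites, of which there are only $O(L)$.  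That is the contradiction.

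Your 1D count of $K_n\gtrsim pn$ disjoint geodesics along $\{0,\dots,n\}e_1$ does not by itself contradict anything (disjoint directed paths from a segment of length $n$ are perfectly consistent with planarity), and the vague ``compare expected pre-exit coalescences under original and modified laws'' does not produce a bound of the form $K_n=o(n)$.  What you are missing is precisely the three-geodesic shielding step that converts a qualitative non-coalescence event into a positive \emph{area density} of mutually unreachable geodesics, which is what clashes with the linear boundary.
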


%The theorem holds also for a stationary random $\t$, but we do not need this generalization here.
Theorem \ref{th:geod:coal:t} is proved by  adapting the argument  of \cite{Lic-New-96},  originally presented  for first passage percolation, and by utilizing the independence property  in   Theorem \ref{th:cocycles}\eqref{th:cocycles:indep}.   
%  TIMO: is this ref needed here?  (This argument was applied  to the exactly solvable  corner growth model  with exponential weights    by \cite{Fer-Pim-05}.)   
% \textcolor{nicos-red}{The next paragraph needs some love... half of it makes no sense unless you talk to an expert... see next comments:}
  Briefly, the idea is the following.      By stationarity the assumption of two nonintersecting geodesics implies we can find at least three.  A   local modification of the weights  turns  the middle geodesic of the triple   into a geodesic that stays disjoint from all geodesics that emanate from sufficiently far away.   By stationarity   at least $\delta L^2$ such disjoint geodesics   emanate from an $L\times L$ square.  This gives a contradiction because there are only $2L$ boundary points for these geodesics to exit through.  
Details can be found in Appendix A of the arXiv preprint \cite{Geo-Ras-Sep-15-} of this paper. %\ref{app-coal}.
%B of the earlier preprint \cite{Geo-Ras-Sep-13b-} on {\tt arXiv.org}.
  
\smallskip

%Now, say there is a positive $\Pbig$-probability of having a 
%doubly infinite up-right path $\tilde x_{-\infty,\infty}$ such that $\tilde x_0\cdot(e_1+e_2)=0$ and $\tilde x_{n,\infty}=x_{0,\infty}^{\tilde x_n,\t,\xi-}$ for some $\t\in\{e_1,e_2\}^{\Z^2}$,  $\xi\in\ri\Uset$, and all $n\in\Z$.
%By shift invariance, there is a positive probability of having two such paths: $\tilde x_{-\infty,\infty}$ as above and $\bar x_{-\infty,\infty}$ such that $\bar x_0\cdot(e_1+e_2)=0$, $\bar x_0\ne\tilde x_0$, and $\bar x_{n,\infty}=x_{0,\infty}^{\bar x_n,\t,\xi-}$ 
%for all $n\in\Z$. By Theorem \ref{th:geod:coal:t}, geodesics $\tilde x_{0,\infty}$ and $\bar x_{0,\infty}$ must coalesce. Since $\tilde x_0\ne\bar x_0$, we also have $\tilde x_n\ne\bar x_n$ for all $n\le0$.
%By ergodicity, there is a positive density of such ``three-armed'' geodesics ($\tilde x_{-\infty,\infty}\cup\bar x_{-\infty,\infty}$).
%A lack-of-space argument leads to a contradiction, 
%dismissing the existence of these doubly infinite cocycle geodesics. The following theorem gives the rigorous statement. We prove it at the end of the section.
The   coalescence result above rules out  the existence of doubly infinite cocycle geodesics (a.s.\ for a given cocycle). The following theorem gives the rigorous statement. Its proof is given at the end of the section and is
based again on a lack-of-space argument, similar to the proof of Theorem 6.9 in \cite{Dam-Han-14}. 

\begin{theorem}\label{th:d-geod:t}
  Let $\t$ be a stationary tie-breaker as in \eqref{stat-t}  and $\xi\in\ri\Uset$. 
Then $\Pbig$-almost surely, for all $u\in\Z^2$, there exist at most finitely many $v\in\Z^2$ such that 
$x^{v,\t,\xi,-}_{0,\infty}$ goes through $u$. The same statement holds for $+$. 
\end{theorem}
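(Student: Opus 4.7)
The plan is a proof by contradiction that uses K\"onig's lemma to reduce the statement to ruling out doubly infinite cocycle geodesics, followed by a Burton--Keane type lack-of-space argument based on the forward coalescence already supplied by Theorem \ref{th:geod:coal:t}.

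Setup. Given the cocycle $B^\xi_-$ and stationary tie-breaker $\t$, each $u\in\Z^2$ has a deterministic forward successor $f(u)\in\{u+e_1,u+e_2\}$ (the first step of $x^{u,\t,\xi,-}_{0,\infty}$) and at most two backward children $\{v\in\{u-e_1,u-e_2\}:f(v)=u\}$. The ancestor set $A(u):=\{v:f^k(v)=u\text{ for some }k\ge 0\}$ is the backward subtree rooted at $u$, and $|A(u)|=N(u)$ in the notation of the theorem. Because the backward tree has out-degree at most two, K\"onig's lemma yields
\[
	N(u)=\infty \quad\Longleftrightarrow\quad \exists\,(z_n)_{n\le 0}\text{ with }z_0=u,\ f(z_{n-1})=z_n,
\]
that is, a doubly infinite cocycle geodesic passes through $u$.

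Assume for contradiction that $\Pbig\{N(0)=\infty\}=\alpha>0$. By stationarity of $\Pbig$, of $\t$, and of the cocycle, together with the $T_{e_i}$-ergodicity in Theorem \ref{th:cocycles}\eqref{th:cocycles:indep} applied to the indicator of $\{N(0)=\infty\}$, I obtain that $\Pbig$-almost surely
\[
	\liminf_{L\to\infty} L^{-2}\,\bigl|\bigl\{u\in\{0,\dots,L-1\}^2:u\text{ lies on a doubly infinite cocycle geodesic}\bigr\}\bigr|\;\ge\;\alpha,
\]
so a positive density of sites lie on such geodesics.

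The heart of the argument is the Burton--Keane step. Call $u$ a \emph{backward branching point} (BP) if both $u-e_1$ and $u-e_2$ are ancestors of $u$ and both satisfy $N(u-e_i)=\infty$. By forward coalescence (Theorem \ref{th:geod:coal:t}), whenever two distinct doubly infinite cocycle geodesics first meet at a site $w$, one of them arrives at $w$ from $w-e_1$ and the other from $w-e_2$, so that $w$ is a BP. Combined with the positive density of sites on such geodesics and with the independence between $\{B^\xi_\pm(x,x+e_i):x\in I\}$ and the weights in $I^<$ given by Theorem \ref{th:cocycles}\eqref{th:cocycles:indep}, one shows that $\beta:=\Pbig\{0\text{ is a BP}\}>0$. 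Then inside $B_L=\{0,\dots,L-1\}^2$ the number of interior BPs is at least $\tfrac12\beta L^2$ for large $L$. At each BP the two infinite backward subtrees at $u-e_1$ and $u-e_2$ are vertex-disjoint (by the tree structure), so by tracing each of them down-left until it first leaves $B_L$ one produces two distinct exit sites on the southwest boundary $\{(i,0):0\le i<L\}\cup\{(0,j):0\le j<L\}$. A standard tree-counting (leaves equal number of binary branchings plus number of connected components) forces the number of such distinct exits to grow like $L^2$, which contradicts the $O(L)$ size of the southwest boundary. This contradiction forces $\alpha=0$, and a countable union over $u\in\Z^2$ then yields the full statement; the argument for $+$ is identical.

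The main obstacle will be the tree-counting that converts the $\beta L^2$ BPs into $\Omega(L^2)$ distinct southwest exits, because sites that are not ancestors of any other site can themselves have positive density and thus produce many spurious leaves of the restricted backward tree. The resolution, as in the Licea--Newman strategy adapted in \cite{Dam-Han-14} and already invoked for Theorem \ref{th:geod:coal:t}, is to modify the weights on a small wedge using the independence statement of Theorem \ref{th:cocycles}\eqref{th:cocycles:indep} so that the two infinite backward subtrees emanating from a BP are forced to remain disjoint from the backward subtrees of all other BPs outside a controlled region, thereby giving rise to genuinely distinct southwest exit sites and producing the required contradiction.
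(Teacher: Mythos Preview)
Your overall strategy---assume a positive probability of an infinite backward cluster, upgrade via ergodicity and the forward coalescence of Theorem~\ref{th:geod:coal:t} to a positive density of trifurcation points, and then run a Burton--Keane lack-of-space count in an $L\times L$ box---is precisely the paper's approach (attributed there to \cite{Dam-Han-14}). Two of your steps, however, go astray.

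The step $\beta>0$ needs no independence input from Theorem~\ref{th:cocycles}\eqref{th:cocycles:indep}. With positive density of sites $u$ satisfying $N(u)=\infty$, pick any two such $u_1\ne u_2$; their forward cocycle geodesics coalesce at some $w$ by Theorem~\ref{th:geod:coal:t}, and removing $w$ from the geodesic tree leaves three infinite pieces: the forward ray together with the two infinite backward components containing $u_1$ and $u_2$ respectively. Hence $w$ is a branching point. Since such $w$ exist with positive probability, shift-invariance forces $\Pbig\{0$ is a BP$\}>0$. This is exactly the paper's argument; nothing further is required.

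The more serious issue is your final paragraph. The ``obstacle'' you flag is a phantom, and the Licea--Newman local modification you propose as a cure is misplaced---that surgery belongs to the proof of coalescence (Theorem~\ref{th:geod:coal:t}), not here. The spurious leaves you worry about appear only if you restrict the \emph{full} backward tree to the box. Instead, restrict to the sub-tree $\cG_\infty$ consisting of those $v$ with $N(v)=\infty$. The very K\"onig argument you invoked shows that \emph{every} vertex of $\cG_\infty$ has at least one backward child in $\cG_\infty$, and your BPs are exactly the vertices with two. Consequently, when $\cG_\infty$ is intersected with $[1,L]^2$, every leaf must sit on the south or west boundary. Your own identity (leaves $=$ binary branchings $+$ components) then bounds the number of BPs in the box by $2L$, contradicting the positive density $\beta$. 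The paper phrases this slightly differently---tracing backward geodesics from each junction point until they hit either another junction point or the boundary, and assembling a binary forest on those---but the combinatorics is identical, and no weight modification enters anywhere.
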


\smallskip 
 
It is known that, in general, uniqueness of geodesics cannot hold simultaneously for all directions.  In our development this is a consequence of Theorem \ref{th:two-geo} below.  
As a step towards uniqueness of geodesics in a given  direction,   the next lemma shows that continuity of the distribution of   $\w_0$ prevents   ties in \eqref{Bw-9}.    
(The construction of the cocycles implies, through equation (7.6) 
%{eq:ne-induction:q} 
in \cite{Geo-Ras-Sep-15a-},   
that  variables $B^{\xi}_{\pm}(x,x+e_i)$ have continuous marginal distributions. Here we need a property of the joint distribution.)    
Consequently,   for a given $\xi$,   $\Pbig$-almost surely    geodesics $x_{0,\infty}^{u,\t,\xi,\pm}$ do not depend on $\t$.

\begin{lemma}\label{lm:geod:unique:t}
Assume  %\eqref{2d-ass} and 
 that $\P\{\w_0\le r\}$ is a continuous function of $r\in\R$. Fix $\xi\in\ri\Uset$. Then for all $u\in\Z^2$, 
\[\Pbig\{B^{\xi}_{+}(u,u+e_1)=B^{\xi}_{+}(u,u+e_2)\}=\Pbig\{B^{\xi}_{-}(u,u+e_1)=B^{\xi}_{-}(u,u+e_2)\}=0.\]
\end{lemma}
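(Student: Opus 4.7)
The plan is to reduce to $u=0$ using stationarity of the cocycle (Definition~\ref{def:cK}(\ref{def:cK:stat})), treat $B^\xi_+$ only since $B^\xi_-$ is entirely analogous, and then proceed in three steps. Define $A = \{B^\xi_+(0,e_1) = B^\xi_+(0,e_2)\}$ and $W = B^\xi_+(0,e_1) - B^\xi_+(0,e_2)$, so $A = \{W=0\}$.

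The first step is to use cocycle additivity around the unit square with vertices $0, e_1, e_2, e_1+e_2$ to obtain
\begin{equation*}
W = B^\xi_+(e_2, e_1+e_2) - B^\xi_+(e_1, e_1+e_2),
\end{equation*}
which expresses the event $A$ in terms of cocycle increments anchored at $e_1$ and $e_2$. Applying Theorem~\ref{th:cocycles}(\ref{th:cocycles:indep}) with $I = \{e_1, e_2\}$, and noting that $0 \in I^<$ because $0 \not\ge e_1$ and $0 \not\ge e_2$, yields that the pair $\bigl(B^\xi_+(e_1, e_1+e_2), B^\xi_+(e_2, e_1+e_2)\bigr)$ is independent of $\w_0$. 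Consequently $W \perp \w_0$. The recovery relation $\w_0 = B^\xi_+(0,e_1) \wedge B^\xi_+(0,e_2)$ from \eqref{Bw-9} combined with this identity for $W$ forces
\begin{equation*}
B^\xi_+(0, e_1) = \w_0 + W^+, \qquad B^\xi_+(0, e_2) = \w_0 + W^-,
\end{equation*}
where $W^\pm = \max(\pm W, 0)$.

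The remaining task is to show $\Pbig(W=0)=0$, and this is where continuity of the marginal of $\w_0$ must enter. The plan is to produce a measurable function $\Phi$ of cocycle increments supported at sites outside the past of $0$ (that is, at sites $x$ with $x\cdot e_1 \ge 1$ or $x\cdot e_2 \ge 1$) such that on the event $A$ the identity $\w_0 = \Phi$ holds. Since Theorem~\ref{th:cocycles}(\ref{th:cocycles:indep}) applied to a suitably enlarged index set $I$ guarantees $\Phi \perp \w_0$, conditioning on $\sigma(\Phi)$ and using continuity of the distribution of $\w_0$ gives
\begin{equation*}
\Pbig(A) \,\le\, \Pbig(\w_0 = \Phi) \,=\, \Ebig\bigl[\Pbig(\w_0 = \Phi \mid \sigma(\Phi))\bigr] \,=\, 0.
\end{equation*}

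The main obstacle is producing the function $\Phi$: cocycle additivity alone only yields tautologies, since on $A$ both routes from $0$ to $e_1+e_2$ return the equality $B^\xi_+(e_1, e_1+e_2) = B^\xi_+(e_2, e_1+e_2)$ without expressing $\w_0$ in $\w_0$-independent terms. To break this symmetry one combines the recovery identities at the auxiliary sites $e_1$ and $e_2$ with the structural independence of Theorem~\ref{th:cocycles}(\ref{th:cocycles:indep}) for several choices of $I$, iterating if needed into the NE quadrant. The construction is in the spirit of Burke-type arguments in queueing theory, which is fitting because the cocycles $B^\xi_\pm$ themselves are built from queueing fixed points in \cite{Geo-Ras-Sep-15a-}; extracting the explicit identity from that construction is the technical heart of the proof.
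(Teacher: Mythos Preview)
Your proposal has a genuine gap at precisely the step you flag as ``the main obstacle.'' The setup is fine: cocycle additivity and Theorem~\ref{th:cocycles}(\ref{th:cocycles:indep}) do give $W = B^\xi_+(e_2,e_1+e_2) - B^\xi_+(e_1,e_1+e_2) \perp \w_0$, and the decomposition $B^\xi_+(0,e_i) = \w_0 + W^{\pm}$ is correct. But on $A=\{W=0\}$ this decomposition collapses to $\w_0 = \w_0$, and iterating the same quadrilateral identity into the NE quadrant produces nothing new: every expression for $B^\xi_+(0,\cdot)$ that you obtain this way is of the form $\w_0 + (\text{something independent of }\w_0)$, and on $A$ the independent part vanishes. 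Independence of $W$ from $\w_0$ tells you nothing about whether $W$ has an atom at $0$, which is exactly the point at issue. Your appeal to the queueing construction in \cite{Geo-Ras-Sep-15a-} is a deferral, not an argument; as written, no $\Phi$ is produced.

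The paper supplies the missing global ingredient: coalescence of cocycle geodesics (Theorem~\ref{th:geod:coal:t}), proved just before this lemma and not relying on it (it holds for any fixed tie-breaker $\t$, so ties are permitted). Assuming $\Pbig(A)>0$ for contradiction, the $\tmax$-cocycle-geodesics out of $e_1$ and $e_2$ coalesce almost surely, so with positive probability they meet at some common $v$ while $A$ holds. On that event $B^\xi_+(0,e_1)=B^\xi_+(0,e_2)=\w_0$, and by Lemma~\ref{lm:grad flow}(\ref{lm:grad flow:a}) each $B^\xi_+(e_i,v)$ equals the sum of weights along the corresponding geodesic. Additivity then forces two \emph{distinct} up-right paths from $0$ to $v$ to have equal $\w$-sums, which has probability zero under continuous i.i.d.\ weights. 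The conceptual difference from your outline is that the paper never tries to isolate $\w_0$ against an independent $\Phi$; instead it closes the two branches at a random coalescence point and reduces to the elementary fact that distinct finite paths almost surely have distinct weight sums.
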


%\begin{corollary}\label{cor:geod:unique:t}
%Same assumptions as in Lemma \ref{lm:geod:unique:t}. Fix $\xi\in\ri\Uset$. Then for $\Pbig$-almost every $\what$,
%for all $u\in\Z^2$ and all $\t,\t'\in\{e_1,e_2\}^{\Z^2}$, $x_{0,\infty}^{u,\t,\xi+}=x_{0,\infty}^{u,\t',\xi+}$. The same statement holds for $\xi-$ in place of $\xi+$.
%\end{corollary}

\begin{proof} %[Proof of Lemma \ref{lm:geod:unique:t}]
Due to shift invariance it is enough to prove the claim for $u=0$. 
We  work with the $+$ case, the other case being similar.

Assume by way of contradiction that the probability in question is positive. %Pick an arbitrary $\t\in\{e_1,e_2\}^{\Z^2}$.
By Theorem \ref{th:geod:coal:t}, $x_{0,\infty}^{e_2,\tmax,\xi,+}$ and $x_{0,\infty}^{e_1,\tmax,\xi,+}$   coalesce with probability one. 
Hence there exists   $v\in\Z^2$ and  $n\ge1$ such that
	\[\P\big\{B^{\xi}_{+}(0,e_1)=B^{\xi}_{+}(0,e_2),\,x_n^{e_1,\tmax,\xi,+}=x_n^{e_2,\tmax,\xi,+}=v\big\}>0.\]
Note that if $B^{\xi}_{+}(0,e_1)=B^{\xi}_{+}(0,e_2)$ then both are equal to $\w_0$. Furthermore, by Lemma \ref{lm:grad flow}\eqref{lm:grad flow:a} we have
	\[B^{\xi}_{+}(e_1,v)=\sum_{k=0}^{n-1}\w(x_k^{e_1,\tmax,\xi,+})\quad\text{and}\quad B^{\xi}_{+}(e_2,v)=\sum_{k=0}^{n-1}\w(x_k^{e_2,\tmax,\xi,+}).\]
(For aesthetic reasons we wrote $\w(x)$ instead of $\w_x$.) Thus
	\begin{align*}
	&\w_0+\sum_{k=0}^{n-1}\w({x_k^{e_1,\tmax,\xi,+}})=B^{\xi}_{+}(0,e_1)+B^{\xi}_{+}(e_1,v)= B^{\xi}_{+}(0,v)\\
	&\qquad=B^{\xi}_{+}(0,e_2)+B^{\xi}_{+}(e_2,v)=\w_0+\sum_{k=0}^{n-1}\w({x_k^{e_2,\tmax,\xi,+}}).
	\end{align*}
The fact that this happens with positive probability contradicts the assumption that $\w_x$ are i.i.d.\ and have a continuous distribution. \qed %The lemma is proved.
\end{proof}

\begin{proofof}{of Theorem \ref{thm1}}
Part \eqref{thm1:exist}.   The existence of $\Uset_{\xi\pm}$-directed semi-infinite geodesics for $\xi\in\ri\Uset$  follows by fixing $\t$ and  taking geodesics  $x^{u,\t,\xi,\pm}_{0,\infty}$ from  Theorem \ref{th:B-geod:direction}.   For $\xi=e_i$  semi-infinite geodesics are simply  $x_{0,\infty}=(x_0+ne_i)_{n\ge0}$.   

 Let  $\Diff_0$ be a dense countable subset of $\Diff$.  Let   $\Ombig_0$  be the event   of full $\Pbig$-probability on which event \eqref{g:77} holds  
%   $x_{0,\infty}^{u,\tmin,\zeta}$ is the leftmost geodesic between any two of its points   (Lemma \ref{lm:grad flow}\eqref{lm:grad flow:b}  applied to cocycle $B^{\zeta}$)  
and Lemma \ref{lm:sandwich}\eqref{lm:sandwich:a} holds  for each  $u\in\Z^2$ and   $\zeta\in\Diff_0$.  We  show that   on  $\Ombig_0$,  every semi-infinite geodesic is $\Uset_\xi$-directed for some $\xi\in\Uset$.  

  Fix $\what\in\Ombig_0$ and an arbitrary semi-infinite geodesic $x_{0,\infty}$.  
% Let $\zeta\in\Diff_0$  Then either  $x_n\cdot e_1\ge  x_n^{x_0,\tmin,\zeta}\cdot e_1$
%for all $n\ge 0$,  or   $x_n\cdot e_1<  x_n^{x_0,\tmin,\zeta}\cdot e_1$  for all large enough $n$.  %This is because $x_{0,\infty}^{x_0,\tmin,\zeta}$ must be the leftmost of the two geodesics between any two points of intersection.   
Define $\xi'\in\Uset$ by 
\[   \xi'\cdot e_1=\varlimsup_{n\to\infty}\frac{x_n\cdot e_1}{\abs{x_n}_1}.  \]  
Let $\xi=\ximin'=$ the left endpoint of $\Uset_{\xi'}$.  
We claim that   $x_{0,\infty}$ is $\Uset_\xi=[\ximin, \ximax]$-directed.  
If $\xi'=e_2$ then  $x_n/\abs{x_n}_1\to e_2$ and $\Uset_\xi=\{e_2\}$ and the case is closed.
Suppose  $\xi'\ne e_2$.

The definition of $\xi$ implies that $\xi'\in\Uset_{\xi+}$ and so 
\[  \varlimsup_{n\to\infty}\frac{x_n\cdot e_1}{\abs{x_n}_1} =\xi'\cdot e_1\le     \ximax\cdot e_1.  \]

From the other direction,  for any $\zeta\in\Diff_0$ such that $ \zeta\cdot e_1<  \xi'\cdot e_1$ we have 
\[  \varlimsup_{n\to\infty}\frac{x_n\cdot e_1}{\abs{x_n}_1} > \zeta\cdot e_1 \]
which by \eqref{eq:sandwich} implies  $x_n\cdot e_1  \ge  x^{x_0,\tmin,\zetamin}_n\cdot e_1$. 
Then  by  \eqref{g:77} 
\[  \varliminf_{n\to\infty}\frac{x_n\cdot e_1}{\abs{x_n}_1} \; \ge \; \varliminf_{n\to\infty} \frac{x^{x_0,\tmin,\zetamin}_n\cdot e_1}{\abs{x^{x_0,\tmin,\zetamin}_n}_1} 
\;\ge\; \underline\zetamin\cdot e_1 \]
where  $\underline\zetamin=$  the left endpoint of $\Uset_{\zetamin}$.  It remains to observe that we can take $\underline\zetamin\cdot e_1$ arbitrarily close to $\ximin\cdot e_1$.    If $ \xi\cdot e_1<  \xi'\cdot e_1$  then we   take  $ \xi\cdot e_1< \zeta\cdot e_1<  \xi'\cdot e_1$ in which case $\zetamin=\xi$ and $\underline\zetamin=\ximin$.    If $\xi=\xi'$ then also $\ximin=\ximin'=\xi$. In this case,  as  $\Diff_0\ni\zeta\nearrow \xi$, $\nabla\gpp(\zeta)$ approaches but never equals $\nabla\gpp(\xi-)$ because there is no flat segment of $\gpp$ adjacent to $\xi$ on the left.  This forces both $\zetamin$ and $\underline\zetamin$ to converge to $\xi$.

% We have shown that   on the event  $\Ombig_0$,  every geodesic is $\Uset_\xi$-directed for some $\xi\in\Uset$.  

\smallskip 

Part  \eqref{thm1:direct}.   If $\gpp$ is strictly concave then  $\Uset_\xi=\{\xi\}$ for all $\xi\in\ri\Uset$ and \eqref{thm1:exist} $\Rightarrow$ \eqref{thm1:direct}.  

\smallskip 

Part \eqref{thm1:cont}.   
By Theorem \ref{th:cocycles}\eqref{th:cocycles:flat} there is a single cocycle $B^\zeta$ simultaneously  for all $\zeta\in[\ximin,\ximax]$.   Consequently cocycle geodesics 
$x_{0,\infty}^{x_0,\t,\ximin}$ and $x_{0,\infty}^{x_0,\t,\ximax}$ coincide for any given tie breaking function $\t$.  
%\textcolor{blue}{TIMO: why is the next sentence here??}  
%By limit \eqref{eq:grad:coc} this cocycle $B^\zeta$ is $\kS$-measurable and hence so are the cocycle geodesics. 
On the event of full $\P$-probability   on which there are no ties between $B^\zeta(x,x+e_1)$ and $B^\zeta(x,x+e_2)$ the tie breaking function $\t$ makes no difference.  Hence 
the left and right-hand side of \eqref{eq:sandwich4} coincide.   Thus there is no room for two $[\ximin,\ximax]$-directed geodesics from any point.    Coalescence comes from Theorem \ref{th:geod:coal:t}.
% The assumption is that $\EP=\ri\Uset$ and that Lemma \ref{lm:geod:unique:t} prevents ties.  Hence $x^{u,\tmin,\xi-}_{0,\infty}=x^{u,\tmax,\xi+}_{0,\infty}$ and thereby leftmost and rightmost geodesics agree.  
 The statement about the finite number of ancestors of a site $u$ comes from Theorem \ref{th:d-geod:t}.
 \qed\end{proofof}

\begin{proofof}{of Theorem \ref{thm:buse-geo}}
By Theorem \ref{th:construction} limit $B$ from \eqref{eq:grad:coc1}  is now the cocycle $B^\xi$.
Part \eqref{thm:buse-geo:i} follows from Lemma \ref{lm:grad flow}.\medskip

Part \eqref{thm:buse-geo:ii}. 
Take sequences $\eta_n, \zeta_n\in\ri\Uset$ with $\eta_n\cdot e_1<\ximin\cdot e_1\le \ximax\cdot e_1<\zeta_n\cdot e_1$ and $\zeta_n\to\ximax$,  $\eta_n\to\ximin$. Consider  the full measure event on which Theorem \ref{th:construction} holds for each  $\zeta_n$ and $\eta_n$ 
with sequences $v_m=\fl{m\zeta_n}$ and $\fl{m\eta_n}$, and on  which 
continuity \eqref{eq:cont} and \eqref{eq:cont-left} holds as $\zeta_n\to\ximax$,  $\eta_n\to\ximin$. In the rest of the proof we drop the index $n$ from $\zeta_n$ and $\eta_n$.

We prove the case of a semi-infinite geodesic 
  $x_{0,\infty}$   that satisfies $x_0=0$ and  \eqref{geod-98}. 
For large $m$, $\fl{m\eta\cdot e_1}<x_m\cdot e_1<\fl{m\zeta\cdot e_1}$.

Consider first the case $x_1=e_1$. If there exists a geodesic from $0$ to $\fl{m\zeta}$ that goes through $e_2$, then this geodesic would intersect $x_{0,\infty}$ and thus there would
exist another geodesic that goes from $0$ to $\fl{m\zeta}$ passing through $e_1$. In this case we would have $\Gpp_{e_1,\fl{m\zeta}}=\Gpp_{e_2,\fl{m\zeta}}$. On the other hand, if there exists a geodesic from $0$ to $\fl{m\zeta}$ that goes through $e_1$, then we would have $\Gpp_{e_1,\fl{m\zeta}}\ge\Gpp_{e_2,\fl{m\zeta}}$. Thus,
in either case, we have 
	\[\Gpp_{0,\fl{m\zeta}}-\Gpp_{e_1,\fl{m\zeta}}\le \Gpp_{0,\fl{m\zeta}}-\Gpp_{e_2,\fl{m\zeta}}.\]
Taking $m\to\infty$ and applying Theorem \ref{th:construction} we have $B^{\overline\zeta}_{+}(0,e_1)\le B^{\overline\zeta}_{+}(0,e_2)$.
Taking $\zeta\to\ximax$ and applying \eqref{eq:cont} we have $B^{\ximax}_{+}(0,e_1)\le B^{\ximax}_{+}(0,e_2)$.
Since $\ximax$ and $\xi$ are points of differentiability of $\gpp$, we have $B^{\ximax}_{+}=B^{\xi}$. Consequently, we have shown $B^\xi(0,e_1)\le B^{\xi}(0,e_2)$.  Since $B^\xi$ recovers the weights
\eqref{Bw-9},    the first step $x_1=e_1$ of $x_{0,\infty}$  satisfies $\w_{0}=B^\xi(0,e_1)\wedge B^{\xi}(0,e_2)=B^\xi(0, x_1)$.

When $x_1=e_2$ repeat the same argument   with $\eta$ in place of $\zeta$ to get  $B^{\xi}(0,e_2)\le B^\xi(0,e_1)$. This proves the theorem for the first step of the geodesic and that 
  is enough.\medskip

Part \eqref{thm:buse-geo:iii}. 
We prove the case $i=0$.
The statement holds if $B^\xi(0,e_1)=B^\xi(0,e_2)$, since then both are equal to $\w_0$ by  recovery \eqref{eq:VB}.
If $\w_0=B^\xi(0,e_1)<B^\xi(0,e_2)$ then convergence \eqref{eq:grad:coc1} implies that for $n$ large enough $\Gpp_{e_1,v_n}>\Gpp_{e_2,v_n}$.
In this case any maximizing path from $0$ to $v_n$ will have to start with an $e_1$-step and the claim is again true.
The case $B^\xi(0,e_1)>B^\xi(0,e_2)$ is similar.
\qed\end{proofof}

\begin{proofof}{of Theorem \ref{thm:lr-geod}}
Part \eqref{thm:lr-geod:i}.   $\underline\ximin=\ximin$ implies  $\Uset_{\ximin-}\subset\Uset_{\xi-}$
%either $\Uset_{\ximin-}=\Uset_{\xi-}$ or $\Uset_{\ximin-}=\{\ximin\}\subset\Uset_{\xi-}$.  
and, by Theorem \ref{th:B-geod:direction}, $x_{0,\infty}^{u,\,\tmin,\,\underline\ximin,-}=x_{0,\infty}^{u,\,\tmin,\,\ximin,-}$ 
is $\Uset_{\xi-}$-directed.
Lemma \ref{lm:sandwich}\eqref{lm:sandwich:a} implies that any $\Uset_{\xi-}$-directed semi-infinite geodesic out of $u\in\Z^2$ stays to the right of $x_{0,\infty}^{u,\,\tmin,\,\underline\ximin,-}$.   Thus  
 $x_{0,\infty}^{u,\,\tmin,\,\ximin,-}$ is the leftmost $\Uset_{\xi-}$-directed  geodesic out of $u$. 
The coalescence claim follows now from Theorem \ref{th:geod:coal:t} and 
the statement about the finite number of ancestors of a site $u$  comes from Theorem \ref{th:d-geod:t}. The case $\overline\ximax=\ximax$ is similar. Part \eqref{thm:lr-geod:i} is proved.

Part \eqref{thm:lr-geod:ii}. It is enough to work with the case $u=0$. The differentiability assumption implies $B^\xi_\pm=B^\xi$.  
 %and therefore $x_{0,\infty}^{0,\tmin,\ximin,-}=x_{0,\infty}^{0,\tmin,\xi,+}$ and $x_{0,\infty}^{0,\tmax,\ximin,-}=x_{0,\infty}^{0,\tmax,\xi,+}$. 
 We will thus omit the $\pm$ from the $B^\xi$-geodesics notation.
Take $v_n$ as in \eqref{eq:vn}. Consider an up-right path $y_{0,\infty}$ that is a limit point of the sequence of leftmost geodesics from $0$ to $v_n$. By this we mean that along this subsequence, 
for any $m\in\N$ the initial $m$-step segment of the leftmost geodesic from $0$ to $v_n$
equals $y_{0,m}$ for $n$ large enough. By Theorem \ref{thm:buse-geo}\eqref{thm:buse-geo:iii} we have almost surely $B^\xi(y_i,y_{i+1})=\w_{y_i}$ for all $i\ge0$. Furthermore, for any $n\in\N$, 
$y_{0,n}$ is the leftmost geodesic between $0$ and $y_n$.
We will next show that whenever $B^\xi(y_i,y_i+e_1)=B^\xi(y_i,y_i+e_2)$ we have $y_{i+1}=e_2$. This then implies that $y_{0,\infty}=x_{0,\infty}^{0,\tmin,\xi}$ and proves part \eqref{thm:lr-geod:ii}.

It is enough to discuss the case of a tie at $y_0=0$. Assume that $B^\xi(0,e_1)=B^\xi(0,e_2)$ but $y_1=e_1$. 
By Theorem \ref{th:geod:coal:t}, $x_{0,\infty}^{e_2,\tmax,\xi}$ coalesces with $x_{0,\infty}^{e_1,\tmax,\xi}$. On the other hand, since we already know that $y_{1,\infty}$ follows minimal $B^\xi$-gradients we know that 
it must remain to the left of $x_{0,\infty}^{e_1,\tmax,\xi}$. This shows that $x_{0,\infty}^{e_2,\tmax,\xi}$ intersects $y_{1,\infty}$ at some point $z$ on level $n=\abs{z}_1$.
But now the path $\bar y_{0,n}$ with $\bar y_0=0$ and $\bar y_{1,n}=x_{0,n-1}^{e_2,\tmax,\xi}$ has last passage weights 	
	\[\w_0+\Gpp_{e_2,z}=\w_0+B^\xi(e_2,z)=B^\xi(0,e_2)+B^\xi(e_2,z)=B^\xi(0,z)\ge\Gpp_{0,z}\]
and is hence a geodesic. (The first equality is because $x_{0,n-1}^{e_2,\tmax,\xi}$ is a cocycle geodesic, the second  comes from weights recovery \eqref{eq:VB} and the tie at $0$, the third is additivity of cocycles,
and the last equality is again weights recovery \eqref{eq:VB}.) 
This contradicts the fact that $y_{0,n}$ is the leftmost geodesic from $0$ to $y_n=z$. 

We have thus shown that $y_{0,\infty}=x_{0,\infty}^{0,\tmin,\xi}$. A similar statement works for the rightmost geodesics.
Part \eqref{thm:lr-geod:ii} is proved.
\qed\end{proofof}
 
 \begin{proofof}{of Theorem \ref{th:d-geod:t}}
% Fix $\t\in\{\tmin,\tmax\}$. 
Let 		$C_u(\what)=\{v\in\Z^2:x^{v,\t,\xi,-}_{0,\infty}\text{ goes through }u\}.$  
%We want to prove that $\Pbig\{\exists u\in\Z^2:\abs{C_u}=\infty\}=0$ which by shift-invariance is equivalent to 
The goal is $\Pbig\{\abs{C_u}=\infty\}=0$. Assume the contrary.  
%Then, by shift-invariance we have
%$\Pbig\{\abs{C_u}=\infty\}=\Pbig\{\abs{C_0}=\infty\}>0$ for all $u\in\Z^2$.  
 Since  $C_u$ is determined by the ergodic processes \eqref{erg-B},  there is then a positive density of  points $u\in\Z^2$ with $\abs{C_u}=\infty$.

%Again by shift-invariance we must have with positive probability 
%that $\abs{C_u}=\infty$ for infinitely many $u$ with $u\cdot(e_2-e_1)=0$. Consequently, there exist two distinct such points $u_1$ and $u_2$ with 
%	\[\Pbig\{\abs{C_{u_1}}=\abs{C_{u_2}}=\infty\}>0.\]
%Given two such points $u_1,u_2\in\Z^2$ 
%Theorem \ref{th:geod:coal:t} implies that geodesics $x^{u_1,\t,\xi,-}_{0,\infty}$ and $x^{u_2,\t,\xi,-}_{0,\infty}$ coalesce. Denote their coalescence point by $u_{1,2}$.
%(In particular, the two geodesics do not intersect before reaching $u_{1,2}$.)
%Hence, there exists a point $u\in\Z^2$ such that
%	\[\Pbig\{\abs{C_{u_1}}=\abs{C_{u_2}}=\infty,u_{1,2}=u\}>0.\]

Consider the tree $\cG$ made out of the union of geodesics $x_{0,\infty}^{x,\t,\xi,-}$ for $x\in\Z^2$. 
(The graph is a tree because once geodesics intersect they merge. It is connected due to coalescence  given by Theorem \ref{th:geod:coal:t}.)
Given $u_1,u_2\in\Z^2$ with $\abs{C_{u_1}}=\abs{C_{u_2}}=\infty$ 
%Theorem \ref{th:geod:coal:t} implies that geodesics $x^{u_1,\t,\xi,-}_{0,\infty}$ and $x^{u_2,\t,\xi,-}_{0,\infty}$ coalesce. Denote 
consider the point where $x^{u_1,\t,\xi,-}_{0,\infty}$ and $x^{u_2,\t,\xi,-}_{0,\infty}$ coalesce. 
%(In particular, the two geodesics do not intersect before reaching $u_{1,2}$.)
Removing this point from the tree splits the tree  into three infinite components. 
Call such a point a {\sl junction point}. 
At each junction point $u$    two infinite admissible paths meet for the first time  at $u$, and each path, as a cocycle geodesic, follows the minimal gradients of $B^{\xi}_{-}$ and 
uses tie-breaking rule $\t$.
We  call these the backward geodesics associated to $u$.
%Geodesic $x_{0,\infty}^{u,t,\xi,-}$ will then be called the forward geodesic associated to $u$.

By shift-invariance and the argument above we have for all $u\in\Z^2$
	\begin{align}\label{junction}
	\Pbig\{u\text{ is a junction point}\}=\Pbig\{0\text{ is a junction point}\}>0.
	\end{align}
%Note that the geodesics tree is determined by the ergodic processes \eqref{erg-B}. 
Then the ergodic theorem  implies that there is a positive density of junction points in $\Z^2$.
We   give a lack of space argument that   contradicts this.  

Let  $J$ be the set  of junction points in the box $[1,L]^2$ together with those points on the south and west boundaries $\{ke_i:1\le k\le L,i=1,2\}$ where a backward geodesic from a junction point first hits the boundary.  
Decompose  $J$  into finite binary trees by declaring that the two immediate descendants of a junction point are the two closest points  on its two backward geodesics that are members of $J$.  
Then the leaves of these trees are exactly the points on the boundary and the junction points are interior points of the trees.  A finite binary tree has more leaves than interior points.  
Consequently, there cannot  be more than $2L+1$ junction points inside $[1,L]^2$. This contradicts \eqref{junction} and proves the   theorem.
%
%We construct by induction an injection from junction points in the box into its south-west boundary.
%Let $m$ be the number  of junction points inside the box. 
%Order these points according to lineage in the geodesics tree $\cG$, i.e.\ using any ordering such that if the forward geodesic out of $u_i$ goes through $u_j$, then
%$i>j$.  As described above, point $u_1$ has at least two infinite backward geodesics associated to it. These geodesics intersect the south-west boundary at at least two points.
%(Each geodesic may stick to the boundary, thus intersecting it in more than one point.)
%Pick any of these points and put it in correspondence with $u_1$. Next, say we have defined a one-to-one correspondence between junction points $u_1,\dotsc,u_\ell$, $1\le\ell<m$, and distinct points $z_1,\dotsc,z_\ell$ on the 
%south-west boundary.
%
%There are two cases. Either junction point $u_{\ell+1}$ is not on any geodesic associated to a previous junction point, in which case its backward geodesics add at least two new points to the boundary.
%Or $u_{\ell+1}$ is on a backward geodesic associated to a previous junction point. In this case, the other backward geodesic cannot touch any previous geodesics 
%for otherwise there will be loops in the geodesics tree $\cG$.
%This backward geodesic adds again at least one new exit point on the boundary.
%
%The upshot is that one cannot have more than $2L+1$ junction points inside $[0,L]^2$. This contradicts \eqref{junction} and proves the claim of the theorem.
\qed\end{proofof}

\section{Competition interface}\label{sec:ci-pf} 

This section proves the results of Section \ref{sec:ci}.  As before, we begin by studying the situation on the extended space $\Ombig$ with the help of the cocycles $B^{\zeta}_{\pm}$ of Theorem \ref{th:cocycles}.

\begin{lemma}\label{aux:ci}
Define $B^{e_1}_{-}$ and $B^{e_2}_{+}$ as the monotone limits of $B^{\zeta}_{\pm}$ when $\zeta\to e_i$, $i=1,2$ respectively. 
Then $\Pbig$-almost surely $B^{e_1}_{-}(0,e_1)=B^{e_2}_+(0,e_2)=\w_0$ and $B^{e_1}_{-}(0,e_2)=B^{e_2}_+(0,e_1)=\infty$.
\end{lemma}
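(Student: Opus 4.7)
My plan splits the four claims into the two finite equalities and the two infinite equalities, with the second pair being substantially harder. For the setup, I would first record the boundary behaviour of the shape function. Using Euler's identity \eqref{euler} together with the universal asymptotic \eqref{eq:g-asym}, a direct computation shows that, as $\xi\to e_1$ in $\ri\Uset$, one has $\nabla\gpp(\xi\pm)\cdot e_1\to\E[\w_0]$ and $\nabla\gpp(\xi\pm)\cdot e_2\to+\infty$, with the symmetric statement at $e_2$. The monotonicity \eqref{eq:monotone} makes $B^\xi_\pm(0,e_1)$ nonincreasing and $B^\xi_\pm(0,e_2)$ nondecreasing as $\xi\cdot e_1\nearrow 1$, so the pointwise monotone limits defining $B^{e_1}_-(x,y)$ exist $\Pbig$-a.s.\ and take values in $[\w_x,+\infty]$ by the recovery bound \eqref{Bw-9}. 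Monotone/dominated convergence together with the mean formula \eqref{eq:h=grad} then gives $\E[B^{e_1}_-(0,e_1)]=\E[\w_0]$, and combined with the pointwise bound $B^{e_1}_-(0,e_1)\ge\w_0$ this forces the a.s.\ equality $B^{e_1}_-(0,e_1)=\w_0$. The same reasoning at $e_2$ yields $B^{e_2}_+(0,e_2)=\w_0$.

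For $B^{e_1}_-(0,e_2)=+\infty$, the corresponding monotone convergence only delivers $\E[B^{e_1}_-(0,e_2)]=+\infty$, which does not give the pointwise value. To promote this to an almost sure statement I would use cocycle additivity around the unit square,
\[
B^\xi_-(e_1,e_1+e_2)-B^\xi_-(0,e_2)=B^\xi_-(e_2,e_1+e_2)-B^\xi_-(0,e_1),
\]
and send $\xi\to e_1$. The previous step makes the right-hand side converge a.s.\ to the finite quantity $\w_{e_2}-\w_0$. Setting $Z:=B^{e_1}_-(0,e_2)$ and identifying $B^{e_1}_-(e_1,e_1+e_2)$ with $Z\circ T_{e_1}$ by stationarity of the limiting cocycle, this convergence forces $\{Z<\infty\}$ and $\{Z\circ T_{e_1}<\infty\}$ to coincide up to a $\Pbig$-null set, and on this common event
\[
Z=Z\circ T_{e_1}+\w_0-\w_{e_2}.
\]
Iterating gives, on $\{Z<\infty\}$ and for every $n\ge 1$,
\[
Z=Z\circ T_{ne_1}+S_n,\qquad S_n:=\sum_{k=0}^{n-1}\bigl(\w_{ke_1}-\w_{ke_1+e_2}\bigr).
\]

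The contradiction is now purely probabilistic. Since $B^{e_1}_-(me_1,me_1+e_2)\ge\w_{me_1}\ge c$ for every $m$, the term $Z\circ T_{ne_1}$ is bounded below by $c$, so on the event $\{Z\le K\}$ the iteration forces $S_n=Z-Z\circ T_{ne_1}\le K-c$ for every $n$. But under $\Pbig$ the marginal of $\w$ is i.i.d.\ (Theorem~\ref{th:cocycles}\eqref{th:cocycles:indep}), so $(S_n)_{n\ge 0}$ is a mean-zero random walk with step variance $2\sigma^2>0$; hence $\sup_n S_n=+\infty$ almost surely. This gives $\Pbig\{Z\le K\}=0$ for every $K\in\R$, i.e.\ $Z=+\infty$ a.s. The symmetric argument, with $T_{e_2}$-shifts and $S_n=\sum_{k=0}^{n-1}(\w_{ke_2}-\w_{ke_2+e_1})$, produces $B^{e_2}_+(0,e_1)=+\infty$. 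The main obstacle in the whole plan is exactly this last passage from mean divergence to pointwise divergence; cocycle additivity is what turns the problem into the almost sure unboundedness of a mean-zero random walk, which is essentially automatic.
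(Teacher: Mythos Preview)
Your proof is correct and follows essentially the same route as the paper. Both arguments handle the finite equalities by combining the recovery bound $B^{e_1}_-(0,e_1)\ge\w_0$ with dominated convergence and $\E[B^\xi_\pm(0,e_1)]=\nabla\gpp(\xi\pm)\cdot e_1\to\E[\w_0]$, and both reduce the infinite claims to the a.s.\ unboundedness of a mean-zero i.i.d.\ random walk built from differences $\w_{ke_1}-\w_{ke_1+e_2}$ (or a one-step shift thereof). The only stylistic difference is that the paper unrolls the Lindley-type recursion $a_n=\w_{ne_1}+(a_{n+1}-\w_{ne_1+e_2})^+$ to obtain a direct lower bound $B^{e_1}_-(0,e_2)\ge\w_0+\text{(random walk)}$, whereas you argue by contradiction, using the standing assumption $\w_0\ge c$ to bound $Z\circ T_{ne_1}$ from below on $\{Z<\infty\}$; the underlying mechanism is the same.
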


\begin{proof}
The limits  exist due to monotonicity \eqref{eq:monotone}. By   \eqref{Bw-9}  
$B^{e_1}_{-}(0,e_1)\ge\w_0$ almost surely. Dominated convergence and \eqref{eq:h=grad} give the limit 
	\[\Ebig[B^{e_1}_{-}(0,e_1)]=\lim_{\zeta\to e_1}\Ebig[B^{\zeta}_{\pm}(0,e_1)]=\lim_{\zeta\to e_1} e_1\cdot\nabla\gpp(\zeta\pm)=\Ebig[\w_0].\]
	The last equality is a consequence of \eqref{eq:g-asym} (see Lemma~4.1 and equations (4.12)--(4.13) in \cite{Geo-Ras-Sep-15a-}).  
  Now  $B^{e_1}_{-}(0,e_1)=\w_0$ almost surely. 

Additivity (Definition \ref{def:cK}\eqref{def:cK:coc}) and recovery \eqref{Bw-9}  are satisfied by $B^{e_1}_{-}$ and  imply
\begin{align*}
&B^{e_1}_{-}(ne_1,ne_1+e_2) \\
&\qquad =\w_{ne_1}+\big(B^{e_1}_{-}((n+1)e_1,(n+1)e_1+e_2)-B^{e_1}_{-}(ne_1+e_2,(n+1)e_1+e_2)\big)^+\\
&\qquad =\w_{ne_1}+\big(B^{e_1}_{-}((n+1)e_1,(n+1)e_1+e_2)-\w_{ne_1+e_2}\big)^+\,.
\end{align*}
The second equality is from the just proved identity $B^{e_1}_{-}(x,x+e_1)=\w_x$.

Repeatedly dropping the outer $+$-part and applying the same formula inductively leads to 
 \begin{align*} B^{e_1}_{-}(0,e_2)&\ge\w_0+\sum_{i=1}^{n}(\w_{ie_1}-\w_{(i-1)e_1+e_2})\\
 &\qquad + \big(B^{e_1}_{-}((n+1)e_1,(n+1)e_1+e_2)-\w_{ne_1+e_2}\big)^+.\end{align*} 
Since the  summands are i.i.d.\ with mean $0$, taking $n\to\infty$ gives  $B^{e_1}_{-}(0,e_2)=\infty$ almost surely.\qed
\end{proof}

\begin{lemma}\label{geo-sep}
Fix $\xi\in(\ri\Uset)\smallsetminus\Diff$ and a   tie-breaker $\t$ that satisfies \eqref{stat-t}. 
%$\t\in\{\tmin,\tmax\}$. 
With $\Pbig$-probability one, for any $u\in\Z^2$ geodesics $x^{u,\t,\xi,+}_{0,\infty}$ and $x^{u,\t,\xi,-}_{0,\infty}$ eventually separate. 
\end{lemma}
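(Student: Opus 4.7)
\emph{Proof plan.} The plan rests on the fact that since $\xi\notin\Diff$, Theorem \ref{th:cocycles}\eqref{th:cocycles:flat} gives distinct cocycles $B^\xi_+\ne B^\xi_-$ with gradient jump $\nabla\gpp(\xi+)-\nabla\gpp(\xi-)\ne0$. Writing $y^\pm_n:=x^{u,\t,\xi,\pm}_n$, I establish two claims: (I) once $y^+$ and $y^-$ are at a common vertex but take different next steps, they never re-meet; (II) the event of perpetual coincidence $E_u:=\{y^+_n=y^-_n\text{ for all }n\ge0\}$ has $\Pbig$-probability zero. Together these imply that only finitely many coincidences occur, i.e.\ eventual separation.

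Claim (I): Suppose $y^+_\tau=y^-_\tau=x$ and the paths diverge at step $\tau+1$. Monotonicity \eqref{eq:monotone} shows that the only possible divergence has $y^+_{\tau+1}=x+e_1$ and $y^-_{\tau+1}=x+e_2$: if $y^-$ picked $e_1$, the chain $B^\xi_+(x,x+e_1)\le B^\xi_-(x,x+e_1)\le B^\xi_-(x,x+e_2)\le B^\xi_+(x,x+e_2)$ would force $y^+$ to pick $e_1$ too, and the common tie-breaker $\t$ would produce agreement in case of equality. If $y^+$ and $y^-$ met again at $z=y^+_\sigma=y^-_\sigma$, both path-segments from $x$ to $z$ would be last-passage geodesics by Lemma \ref{lm:grad flow}\eqref{lm:grad flow:a}, both achieving $\Gpp_{x,z}$. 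Additivity of $B^\xi_-$, recovery \eqref{Bw-9}, and the identity $B^\xi_-(x,z)=\Gpp_{x,z}$ (the $-$-geodesic $y^-$ attains $\Gpp_{x,z}$) then give
\[ B^\xi_-(x,z)=\sum_{k=\tau}^{\sigma-1}B^\xi_-(y^+_k,y^+_{k+1})\ge\sum_{k=\tau}^{\sigma-1}\w_{y^+_k}=\Gpp_{x,z}=B^\xi_-(x,z), \]
so each summand saturates: $B^\xi_-(y^+_k,y^+_{k+1})=\w_{y^+_k}$. At $k=\tau$ this reads $B^\xi_-(x,x+e_1)=\w_x$, and combined with $B^\xi_-(x,x+e_2)=\w_x$ (from $y^-$'s choice of $e_2$) produces a tie in $B^\xi_-$ at $x$. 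A symmetric argument with $y^-$ and $B^\xi_+$ produces a tie in $B^\xi_+$ at $x$. But then both geodesics must step in the common direction $\t(x)$, contradicting the divergence.

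Claim (II): Suppose $\Pbig(E_u)>0$; by shift-invariance assume $u=0$ and write $y_n=y^+_n=y^-_n$ on $E_0$. Recovery \eqref{Bw-9} and the common direction choice force $B^\xi_\pm(y_n,y_{n+1})=\w_{y_n}$ on every path edge, so the cocycle difference $D:=B^\xi_+-B^\xi_-$ vanishes on each path edge and $D(0,y_n)=0$ for all $n$. By Theorem \ref{th:cocycles}\eqref{th:cocycles:exist}--\eqref{th:cocycles:flat}, $D$ is a nonzero stationary $L^1$ cocycle with mean vector $\nabla\gpp(\xi+)-\nabla\gpp(\xi-)$; monotonicity \eqref{eq:monotone} gives $\Ebig[D(0,e_1)]\le0\le\Ebig[D(0,e_2)]$, and Euler's identity \eqref{euler} $\nabla\gpp(\xi\pm)\cdot\xi=\gpp(\xi)$ together with $\xi\in\ri\Uset$ forces both inequalities to be strict. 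Theorem \ref{th:B-geod:direction} applied to $y^\pm$ forces $y_n/\abs{y_n}_1\to\xi$ on $E_0$ because $\Uset_{\xi+}\cap\Uset_{\xi-}=\{\xi\}$. The contradiction comes from an ergodic density comparison: the set of vertices $x$ where $B^\xi_+$ and $B^\xi_-$ produce the same preference has $\Pbig$-density strictly below $1$ (otherwise the cocycles would have equal mean vectors, contradicting $\xi\notin\Diff$), whereas on $E_0$ the path $y$ visits only such vertices with full density along itself. Applying the cocycle ergodic theorem (Theorem \ref{th:Atilla}) to $D$ along $y$, together with the ergodicity of the stationary process \eqref{erg-B}, yields the required contradiction.

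\emph{Main obstacle.} The delicate step is making rigorous the density comparison in (II)---ruling out that a cocycle geodesic stays with positive probability in the random set where $B^\xi_+$ and $B^\xi_-$ agree on the chosen edge. A direct ergodic theorem along the random path requires some care; an alternative is a lack-of-space argument in the spirit of Theorem \ref{th:d-geod:t}, exploiting that perpetual coincidence with positive probability would, by stationarity, produce a positive density of vertices whose $\pm$-geodesics coincide, and coalescence within each tree (Theorem \ref{th:geod:coal:t}) would force the shared trunks of the $+$-tree and $-$-tree to overlap in a way incompatible with the distinctness of the two cocycles.
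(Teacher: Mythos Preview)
Your Claim (I) is correct and in fact proves a stronger statement than the lemma requires: the paper interprets ``eventually separate'' simply as ``not identical as paths,'' and its applications only need a single divergence point.  So (I) is a nice observation but not the heart of the matter.

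The genuine gap is in Claim (II).  Your plan is to track the difference cocycle $D=B^\xi_+-B^\xi_-$ along the common geodesic $y$, observe $D(0,y_n)=0$, and invoke Theorem~\ref{th:Atilla} to contradict the nonzero mean vector $\nabla\gpp(\xi+)-\nabla\gpp(\xi-)$.  But on $E_0$ the path $y$ is $\xi$-directed (since $\Uset_{\xi+}\cap\Uset_{\xi-}=\{\xi\}$), and by Euler's identity~\eqref{euler} one has $\bigl(\nabla\gpp(\xi+)-\nabla\gpp(\xi-)\bigr)\cdot\xi=\gpp(\xi)-\gpp(\xi)=0$.  So the ergodic theorem along $y$ yields only $0=0$: the mean of $D$ is orthogonal to $\xi$, and no contradiction emerges.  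Your fallback ``density comparison'' does not work either: having the same minimal-gradient direction at every vertex does \emph{not} force the cocycles to have equal mean vectors, since the cocycle values along the non-chosen edge can differ freely.  You correctly flag this as the main obstacle, but neither of your proposed workarounds is carried through.

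The paper resolves this obstacle with a different, clean idea.  Assume $\Pbig(A_0)>0$ and fix an \emph{arbitrary} direction $\zeta\in\ri\Uset$.  By stationarity there exist, with positive probability, infinitely many $u_n=\fl{k_n\zeta}$ with $A_{u_n}$ holding.  Now use coalescence (Theorem~\ref{th:geod:coal:t}): the common $\pm$-geodesic from $u_0$ and the common $\pm$-geodesic from $u_n$ merge at some $z_n$, and along each of these geodesics both $B^\xi_+$ and $B^\xi_-$ equal the passage time by Lemma~\ref{lm:grad flow}.  Additivity then gives
\[
B^\xi_+(u_0,u_n)=\Gpp_{u_0,z_n}-\Gpp_{u_n,z_n}=B^\xi_-(u_0,u_n).
\]
Applying Theorem~\ref{th:Atilla} separately to $B^\xi_+$ and $B^\xi_-$ along the \emph{deterministic} direction $\zeta$ yields $\nabla\gpp(\xi+)\cdot\zeta=\nabla\gpp(\xi-)\cdot\zeta$.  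Since $\zeta$ was arbitrary, $\nabla\gpp(\xi+)=\nabla\gpp(\xi-)$, contradicting $\xi\notin\Diff$.  The key move you are missing is to use coalescence to transport the cocycle equality away from the $\xi$-directed geodesic and onto an arbitrary direction where the gradient difference is visible.
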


\begin{proof}
Let $A_u=\{x^{u,\t,\xi,+}_{0,\infty}=x^{u,\t,\xi,-}_{0,\infty}\}$. We want  $\Pbig(A_0)=0$.
Assume the contrary.  Fix $\zeta\in\ri\Uset$. 
%By ergodicity, with $\Pbig$-probability one, along some subsequence that we will just call $n$,  each square $\fl{ n^2\zeta }  +  [0,n]^2 $   
%contains at least one point $u_n$ such that $\what\in A_{u_n}$. We have $\abs{u_n}_1\to\infty$ and  $u_n/\abs{u_n}_1\to\zeta$.
$\Pbig(A_0)>0$ and stationarity imply that with positive probability there exists a random sequence $u_n=\fl{k_n\zeta}$ such that  $k_n\to\infty$ and 
$A_{u_n}$ holds for each $n$. 
Furthermore, for each such $u_n$ we know  from Theorem \ref{th:geod:coal:t} that $x^{u_0,\t,\xi,+}_{0,\infty}=x^{u_0,\t,\xi,-}_{0,\infty}$ and $x^{u_n,\t,\xi,+}_{0,\infty}=x^{u_n,\t,\xi,-}_{0,\infty}$ coalesce 
at some random point $z_n$. By the additivity and \eqref{g:GB} we then have
	\begin{align}\label{previous}
	\begin{split}
	B^\xi_+(u_0,u_n)  &=   B^\xi_+(u_0,z_n) - B^\xi_+(u_n, z_n) = \Gpp_{u_0,z_n}-\Gpp_{u_n,z_n}\\
	& = B^\xi_-(u_0,z_n) - B^\xi_-(u_n, z_n) = B^\xi_-(u_0,u_n).
	\end{split}
	\end{align}
By recovery \eqref{Bw-9} the conditions of Theorem \ref{th:Atilla} are satisfied and because of \eqref{eq:h=grad} we have
	\begin{align*}
	&\lim_{n\to\infty}\frac{B^\xi_+(u_0,u_n)-\nabla\gpp(\xi+)\cdot(u_n-u_0)}{\abs{u_n}_1}\\
	&\qquad\qquad=0=\lim_{n\to\infty}\frac{B^\xi_-(u_0,u_n)-\nabla\gpp(\xi-)\cdot(u_n-u_0)}{\abs{u_n}_1}.
	\end{align*}
This and \eqref{previous} lead to $\nabla\gpp(\xi-)\cdot\zeta=\nabla\gpp(\xi+)\cdot\zeta$. Since $\zeta$ is arbitrary we get $\nabla\gpp(\xi-)=\nabla\gpp(\xi+)$,  
which contradicts the assumption on $\xi$.\qed
%
%The claim would follow if we show that with $\Pbig$-probability one, there exists $u\in\Z^2_+$ such that geodesics 
%$x^{u,\xi,\pm}_{0,\infty}$ eventually separate. Assume the contrary: with positive $\Pbig$-probability, geodesics $x^{u,\xi,\pm}_{0,\infty}$ match for all $u\in\Z^2_+$.
%
%Since  $\nabla\gpp(\xi+)\ne\nabla\gpp(\xi-)$ we know from \eqref{eq:h=grad} that
%$\E[B^\xi_+(0,e_i)]\ne \E[B^\xi_-(0,e_i)]$ for $i\in\{1,2\}$.
%%when \nabla\gpp(\xi\pm) differ, they necessarily differ in both coord?s.   For if u,v are both in \partial\gpp(\xi),  then  \gpp(\xi) = u.\xi = v.\xi  implies either both or neither coordinates of u and v agree.  
%Then by the ergodic theorem we know that almost surely there exists a $u\in\Z_+^2$ such that
%$B^\xi_+(u,u+e_i)\ne B^\xi_-(u,u+e_i)$.
%
%By Theorem \ref{th:geod:coal:t} geodesics $x^{u,\xi,+}_{0,\infty}$ and $x^{u+e_i,\xi,+}_{0,\infty}$ coalesce at some point $z$. By the assumption of matching geodesics,
%there is positive probability that $x^{u,\xi,+}_{0,\infty}=x^{u,\xi,-}_{0,\infty}$ and $x^{u+e_i,\xi,+}_{0,\infty}=x^{u+e_i,\xi,-}_{0,\infty}$.
%On this event we have by the cocycle property and \eqref{g:GB} that
%	\begin{align*}
%	B^\xi_+(u,u+e_i)  &=   B^\xi_+(u,z) - B^\xi_+(u+e_i, z) = \Gpp_{u,z}-\Gpp_{u+e_i,z}\\
%	& = B^\xi_-(u,z) - B^\xi_-(u+e_i, z) = B^\xi_-(u,u+e_i).
%	\end{align*}
%This contradicts the choice of $u$.
\end{proof}

Now  assume that $\w_0$ has a continuous distribution. 
By Lemma \ref{lm:geod:unique:t} we can omit $\t$ from the cocycle geodesics notation and write $x^{u,\xi,\pm}_{0,\infty}$.

Next we use the cocycles to define a random variable $\cid$ on $\Ombig$  that represents the asymptotic direction of the competition interface.    
By Lemma \ref{lm:geod:unique:t}, with $\Pbig$-probability one,  $B^{\xi}_{\pm}(0,e_1)\ne B^{\xi}_{\pm}(0,e_2)$ for all rational $\xi\in\ri\Uset$. Furthermore, monotonicity \eqref{eq:monotone}
gives that
	\[B^{\zeta}_{+}(0,e_1)-B^{\zeta}_{+}(0,e_2)\le B^{\zeta}_{-}(0,e_1)-B^{\zeta}_{-}(0,e_2) \le B^{\eta}_{+}(0,e_1)-B^{\eta}_{+}(0,e_2)\]
when $\zeta\cdot e_1>\eta\cdot e_1$. Lemma \ref{aux:ci} implies that $B^{\zeta}_{\pm}(0,e_1)-B^{\zeta}_{\pm}(0,e_2)$ converges to $-\infty$ as $\zeta\to e_1$ and to $\infty$ as $\zeta\to e_2$. 
Thus there exists   unique $\cid(\what)\in\ri\Uset$ such that   for rational $\zeta\in\ri\Uset$, 
\begin{align}\label{ci}
\begin{split}
&B^{\zeta}_{\pm}(\what, 0,e_1)<B^{\zeta}_{\pm}(\what, 0,e_2)\quad\text{if }\zeta\cdot e_1>\cid(\what)\cdot e_1\\
\text{ and}\qquad &B^{\zeta}_{\pm}(\what, 0,e_1)>B^{\zeta}_{\pm}(\what, 0,e_2)\quad\text{if }\zeta\cdot e_1<\cid(\what)\cdot e_1.
\end{split}
\end{align}
For the next theorem on the properties of $\cid$, recall  $\Uset_{\cid(\what)}=[\cidmin(\what),\cidmax(\what)]$ from \eqref{eq:sector2}.

\begin{theorem}\label{th:two-geo}
Assume $\P\{\w_0\le r\}$ is continuous in $r$. 
Then on the extended space $\OBPbig$ of Theorem \ref{th:cocycles}  the  random variable $\cid(\what)\in\ri\Uset$  defined by \eqref{ci}   has the following properties. 
\begin{enumerate}[\ \ \rm(i)]
\item\label{th:two-geo:i}  $\Pbig$-almost surely, for every $z\in\Z^2$, there exists a $\Uset_{\cid(T_z\what)-}$-directed geodesic out of $z$ that goes through $z+e_2$ and a
$\Uset_{\cid(T_z\what)+}$-directed geodesic out of $z$ that goes through $z+e_1$. The two geodesics intersect only  at their starting point $z$.
\item\label{th:two-geo:ii} The following holds $\Pbig$-almost surely. Let   $x'_{0,\infty}$ and $x''_{0,\infty}$ be any geodesics out of $0$ with   
\begin{align}\label{eq:thm5.3(ii)}
\varliminf_{n\to\infty}  \frac{x'_n\cdot e_1}n<\cidmin(\what) 
\qquad\text{and}\qquad 
\varlimsup_{n\to\infty}  \frac{x''_n\cdot e_1}n>\cidmax(\what).
\end{align}  
Then   $x'_1=e_2$ and  $x''_1=e_1$.  
\item\label{th:two-geo:iii} $\cid$ is almost surely an exposed point {\rm(}see \eqref{eq:epod} for the definition{\rm)}.  Furthermore,   $\Phat\{\what: \cid(\what)=\xi\}>0$ if and only if $\xi\in(\ri\Uset)\smallsetminus\Diff$.
%Furthermore, no $\xi\in\Diff\cup\{e_1,e_2\}$ can be a limit point of atoms of $\cid$.  
%$\Phat\{\cid(T_x\what)=\cid(T_y\what)\}=0$ for all $x,y\in\Z^2$. \pfnote{no longer sure this is true.}
%\item\label{th:two-geo:iv}   Fix $\zeta,\eta\in\ri\Uset$  such that $\zeta\cdot e_1<\eta\cdot e_1$ and $\nabla\gpp(\zeta+)\not=\nabla\gpp(\eta-)$,  with    $\Uset_\zeta=[\underline\zeta,\overline\zeta]$ and $\Uset_\eta=[\underline\eta,\overline\eta]$.
%Then for $\Pbig$-almost every $\what$ there exists $z\in\Z^2$ such that $\cid(T_z\what)\in[\underline\zeta,\overline\eta]$.
\item\label{th:two-geo:iv}  Fix $u\in\Z^2$. 

{\rm(a)}  Let  $\zeta,\eta\in\ri\Uset$  be such that $\zeta\cdot e_1<\eta\cdot e_1$ and $\nabla\gpp(\zeta+)\not=\nabla\gpp(\eta-)$.
%%,  with    $\Uset_\zeta=[\underline\zeta,\overline\zeta]$ and $\Uset_\eta=[\underline\eta,\overline\eta]$.
Then for $\Pbig$-almost every $\what$ there exists $z\in u+\Z_+^2$ such that $\cid(T_z\what)\in\;]\zeta,\eta[$.

{\rm(b)}  Let $\xi\in(\ri\Uset)\smallsetminus\Diff$.    Then for $\Pbig$-almost every $\what$ there exists $z\in u+\Z_+^2$ such that $\cid(T_z\what)=\xi$.

The point $z$ can be chosen so that,
in both cases {\rm(a)} and {\rm(b)},   there are two geodesics out of $u$ that split at this $z$ and after that never intersect, and of these two  geodesics the one  that goes through $z+e_2$ is 
$\Uset_{\cid(T_z\what)-}$-directed, while the one that goes through $z+e_1$ is 
$\Uset_{\cid(T_z\what)+}$-directed. 
%\item\label{th:two-geo:v} Assume that $\zeta$ is not an extreme point to two different  maximal segments on which $\gpp$ is linear. Assume the same of $\eta$.
%Then in \eqref{th:two-geo:iv} there are two $\cid(T_z\what)$-geodesics out $0$ that split at $z$ and then do not coalesce. 
\end{enumerate}
\end{theorem}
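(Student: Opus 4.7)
The plan is to take the two geodesics in part \eqref{th:two-geo:i} to be the cocycle geodesics $x^{z,\tmax,\cid,+}_{0,\infty}$ and $x^{z,\tmin,\cid,-}_{0,\infty}$ built from $B^{\cid}_{\pm}$ by the minimal-gradient recipe of Section \ref{sec:geod}. Throughout I work on an event of full $\Pbig$-measure on which right-continuity \eqref{eq:cont} of $B^\zeta_{\pm}$ holds at every $\xi\in\ri\Uset$, left-continuity \eqref{eq:cont-left} holds at each point of a fixed countable dense set $\Uset_0\subset\Diff\cap\ri\Uset$, and Lemma \ref{lm:geod:unique:t} applies for each $\xi\in\Uset_0$. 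Taking rational $\zeta\searrow\cid(T_z\what)$ in \eqref{ci} and using \eqref{eq:cont} produces $B^{\cid}_+(z,z+e_1)\le B^{\cid}_+(z,z+e_2)$, forcing the first step of $x^{z,\tmax,\cid,+}$ to be $e_1$; a symmetric monotone limit from rationals $\zeta\nearrow\cid(T_z\what)$, combined with the fact that $\lim_{\zeta\nearrow\cid}B^\zeta_+$ is a stationary $L^1$ cocycle with mean $\nabla\gpp(\cid-)$ and must therefore coincide with $B^{\cid}_-$ by Theorem \ref{th:cocycles}\eqref{th:cocycles:flat}, yields $B^{\cid}_-(z,z+e_1)\ge B^{\cid}_-(z,z+e_2)$ and first step $e_2$ for the other geodesic. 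Directional membership in $\Uset_{\cid\pm}$ comes from Theorem \ref{th:B-geod:direction}.

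The non-intersection assertion in \eqref{th:two-geo:i} is proved by adapting Lemma \ref{geo-sep}: if the two geodesics recoalesced at some $w$, then Lemma \ref{lm:grad flow}\eqref{lm:grad flow:a} would give $B^{\cid}_+(z,w)=\Gpp_{z,w}=B^{\cid}_-(z,w)$, and iterating this identity along a sequence $w_n\to\infty$ in the common tail and applying the cocycle ergodic theorem (Theorem \ref{th:Atilla}) would force $\nabla\gpp(\cid+)\cdot\zeta=\nabla\gpp(\cid-)\cdot\zeta$ for the asymptotic direction $\zeta$ of the tail, contradicting $\cid\notin\Diff$. The $\cid\in\Diff$ case, where $B^{\cid}_+=B^{\cid}_-$, must be handled differently: here the exposedness of $\cid$ from part \eqref{th:two-geo:iii} together with the rational approximations $\zeta\nearrow\cid$ and $\zeta\searrow\cid$ used above shows that the two cocycle geodesics are limits of strictly different cocycle geodesics coming from the two sides of $\cid$, and this separation persists in the limit. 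Part \eqref{th:two-geo:ii} is then a short application of Lemma \ref{lm:sandwich}\eqref{lm:sandwich:a}: pick $\xi\in\Uset_0$ with $\varliminf_n x'_n\cdot e_1/n<\xi\cdot e_1<\cidmin(\what)\cdot e_1$; then \eqref{ci} gives $B^\xi_{\pm}(0,e_1)>B^\xi_{\pm}(0,e_2)$, so $x_1^{0,\tmax,\xi,+}=e_2$, and \eqref{eq:sandwich2} yields $x'_1\cdot e_1\le 0$, i.e., $x'_1=e_2$; the claim for $x''$ is symmetric.

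For part \eqref{th:two-geo:iii}, the exposedness of $\cid$ is established by contradiction. Suppose $\cid\in\Diff$ lay in the interior of a maximal linear segment $[\zeta,\eta]$ with $\zeta,\eta\in\Diff$; then Theorem \ref{th:cocycles}\eqref{th:cocycles:flat} collapses all cocycles on $[\zeta,\eta]$ to a common $B$, and picking rational $\xi$ on both sides of $\cid$ inside $[\zeta,\eta]$ and applying \eqref{ci} forces both $B(0,e_1)>B(0,e_2)$ and the reverse, a contradiction; the boundary cases $\cid\in\{\zeta,\eta\}$ are excluded by combining left-continuity \eqref{eq:cont-left} at the countable set of endpoints of maximal linear segments with the same argument. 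For the atom dichotomy, when $\xi\in\Diff$ the event $\{\cid=\xi\}$ forces $B^\xi(0,e_1)=B^\xi(0,e_2)$ by two-sided monotone limits, which has probability zero by Lemma \ref{lm:geod:unique:t}; when $\xi\notin\Diff$ the strict gradient gap $\nabla\gpp(\xi-)\ne\nabla\gpp(\xi+)$ together with structural information on the joint law of $(B^\xi_-,B^\xi_+)$ from the queueing construction yields positive probability for $\{B^\xi_+(0,e_1)-B^\xi_+(0,e_2)\le 0\le B^\xi_-(0,e_1)-B^\xi_-(0,e_2)\}\subset\{\cid=\xi\}$.

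Part \eqref{th:two-geo:iv} combines part \eqref{th:two-geo:iii} with $\Z^2$-translation ergodicity of $\Pbig$, available from Theorem \ref{th:cocycles}\eqref{th:cocycles:indep}. In case (a), the hypothesis $\nabla\gpp(\zeta+)\ne\nabla\gpp(\eta-)$ ensures that $]\zeta,\eta[$ is not contained in a single linear piece of $\gpp$, so part \eqref{th:two-geo:iii} gives $\Pbig\{\cid\in\,]\zeta,\eta[\}>0$; in case (b), $\Pbig\{\cid=\xi\}>0$ by part \eqref{th:two-geo:iii} directly. The ergodic theorem then produces $z\in u+\Z_+^2$ with $\cid(T_z\what)$ in the desired set; the two infinite geodesics from $z$ given by \eqref{th:two-geo:i} never reintersect, while the continuous-weight assumption supplies the a.s.\ unique finite geodesic from $u$ to $z$, and concatenation yields the claimed pair of infinite geodesics out of $u$ that split at $z$ with the stated directional properties. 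The hardest step throughout is the non-intersection claim in \eqref{th:two-geo:i} in the regime $\cid\in\Diff$, where the collapse $B^{\cid}_+=B^{\cid}_-$ disables the direct cocycle ergodic argument and must be replaced by the delicate two-sided rational approximation together with exposedness from \eqref{th:two-geo:iii}; a secondary obstacle is the positive-probability claim in \eqref{th:two-geo:iii} for $\xi\notin\Diff$, which requires information about the joint law of $(B^\xi_-,B^\xi_+)$ that is not visible from their means alone.
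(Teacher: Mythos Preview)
Your argument has several genuine gaps. In part \eqref{th:two-geo:i} you cannot identify $\lim_{\zeta\nearrow\cid}B^\zeta_+$ with $B^{\cid}_-$ via Theorem \ref{th:cocycles}\eqref{th:cocycles:flat}: that item compares only members of the constructed family $\{B^\xi_\pm\}$, not arbitrary cocycles, and left-continuity \eqref{eq:cont-left} holds only at a \emph{fixed} $\xi$, not at the random direction $\cid$. The paper explicitly warns about this and works with the left limit $B^*_-$ directly, getting $\Uset_{\cid-}$-directedness by sandwiching the $B^*_-$-geodesic between $x^{z,\zeta,+}_{0,\infty}$ (rational $\zeta\cdot e_1<\cid\cdot e_1$) and $x^{z,\t^-,\cid,-}_{0,\infty}$. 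You also drastically overcomplicate the non-intersection claim, calling it ``the hardest step throughout.'' It is a one-liner: the two geodesics pass through $z+e_1$ and $z+e_2$, so if they met again at some $w$ there would be two distinct finite geodesics from $z$ to $w$, which has probability zero under continuous weights. No cocycle ergodic theorem and no case split on whether $\cid\in\Diff$ is needed. Similarly, for the positive-atom half of \eqref{th:two-geo:iii} your appeal to ``structural information from the queueing construction'' is not a proof; the paper argues via Lemma \ref{geo-sep}: for $\xi\notin\Diff$ the geodesics $x^{0,\xi,\pm}_{0,\infty}$ eventually separate at some $z$, the separation inequalities force $\cid(T_z\what)=\xi$, and stationarity gives $\Pbig(\cid=\xi)>0$.

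Your part \eqref{th:two-geo:iv} argument fails outright. The concatenation of the finite geodesic from $u$ to $z$ with a semi-infinite geodesic out of $z$ is \emph{not} in general a semi-infinite geodesic out of $u$: the geodesic from $u$ to a far point $y_n$ on the infinite path need not pass through $z$, so $\Gpp_{u,y_n}$ can exceed $\Gpp_{u,z}+\Gpp_{z,y_n}$. The paper instead constructs the two geodesics from $u$ directly by following minimal $B^*_\pm$-gradients (Lemma \ref{lm:grad flow} applies since $B^*_\pm$ are additive and recover the weights), and then shows via monotonicity \eqref{eq:monotone} that these stay sandwiched between the cocycle geodesics $x^{u,\zeta,+}_{0,\infty}$ and $x^{u,\eta,-}_{0,\infty}$, hence must pass through the point $z$ where those two separate. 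Your ergodicity invocation is also unjustified: Theorem \ref{th:cocycles}\eqref{th:cocycles:indep} gives ergodicity only of the fixed-$\xi$ processes $\psi^{\pm,\xi}$, not of $\Pbig$ itself or of the process $\{\cid(T_z\what)\}_{z\in\Z^2}$.
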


%If $\gpp$ is everywhere differentiable, then $\Uset_{\cid}$ is always a maximal segment on which $\gpp$ is linear. 
%If $\gpp$ is strictly concave or everywhere differentiable, then $\cid$ is $\kS$-measurable. In the former case,  the above geodesics are $\cid(T_x\w)$-geodesics, $\cid(\w)$ is supported on all of $\Uset$, and    $\{\cid(T_x\w):x\in\Z^2\}$ is dense in $\Uset$, for $\P$-almost every $\w$. 
%If $\gpp$ is everywhere differentiable (but not necessarily strictly concave), then again $\cid$ is $\kS$-measurable and $\Uset_{\cid}$ is always a maximal segment 
%(that could degenerate to a point) over which $\gpp$ is linear.

\begin{proof}%[Proof of Theorem \ref{th:two-geo}]
%Note that on the event $\{\cid(\what)=e_1\}$ we have $B^{\zeta\pm}(0,e_1)-B^{\zeta\pm}(0,e_2)>0$ for all rational $\zeta\in\ri\Uset$.
%This means that for all such $\zeta$, $x_1^{0,\zeta\pm}=e_2$. (Tie breaking rules $\t$ are not necessary in the notation due to Lemma \ref{lm:geod:unique:t}.)
%Since $\w_0$ has a continuous distribution, geodesics (finite or infinite) coming out of $0$ cannot meet again after they split. 
%This imposes a natural ordering on all geodesics. Consequently, we see that  any infinite geodesic that starts at $0$ and takes a first step $e_1$ 
%will have to stay to the right of all geodesics $x^{0,\zeta\pm}$ and is thus $e_1$-directed. 
%
%
%%By potential recovery, this is equivalent 
%%to $B^{\zeta\pm}(0,e_1)-\w_0>0$ for all rational $\zeta\in\ri\Uset$. But, by monotonicity and potential recovery, we have that these functions are nonnegative nonincreasing as $\zeta\to e_1$. By \eqref{grad g-1} and Lemma \ref{gppa-lm} the average $\Ebig[B^{\zeta\pm}(0,e_1)-\w_0]=e_1\cdot\nabla\gpp(\zeta\pm)-\Ew$ converges to $0$ as $\zeta\to e_1$.
%%Monotone convergence then gives that $\inf_\zeta B^{\zeta\pm}(0,e_1)=\w_0$.....
%By \eqref{f-9} and Lemma \ref{lm:f-properties} we have
%\[\Ebig[B^{\zeta\pm}(0,e_1)-B^{\zeta\pm}(0,e_2)]=(e_1-e_2)\cdot\nabla\gpp(\zeta\pm)\to-\infty\text{ as }\zeta\to e_1.\] 
%Thus,  $\Pbig\{\cid=e_1\}<1$.
%The same reasoning rules out $\cid(\what)=e_2$, $\Pbig$-almost surely.  \pfnote{need to rule out $\P\{\cid=e_1\}>0$!}
Fix a (possibly $\what$-dependent) $z\in\Z^2$. Define 
\begin{align}\label{B*} 
\begin{split}
\Bci_+(\what,x,y)&=\lim_{\eta\cdot e_1\searrow\cid(T_z\what)\cdot e_1}B^{\eta}_{\pm}(\what,x,y)  \\
\text{ and}\qquad \Bci_-(\what,x,y)&=\lim_{\zeta\cdot e_1\nearrow\cid(T_z\what)\cdot e_1}B^{\zeta}_{\pm}(\what,x,y)\,.
\end{split} \end{align}   
We have to keep the  $\Bci_{\pm}$ distinction  because the almost sure continuity statement  \eqref{eq:cont-left}  does not apply to the random 
direction $\cid$.  
%continuity \eqref{eq:cont-left} is  not expected to hold simultaneously  for all directions with a single  $\Pbig$-null set.   
In any case, $\Bci_{\pm}$ are additive (Definition \ref{def:cK}\eqref{def:cK:coc}) and recover weights $\w_x=\min_{i=1,2} \Bci_{\pm}(\what,x,x+e_i)$. %(Definition \ref{def:bdry-model}).  
From \eqref{ci} and stationarity (Definition \ref{def:cK}\eqref{def:cK:stat}) we have  
	\begin{align}\label{part ways}
	\Bci_+(z,z+e_1)\le \Bci_+(z,z+e_2)\quad\text{and}\quad\Bci_-(z,z+e_1)\ge \Bci_-(z,z+e_2).
	\end{align}

Fix any two tie-breaking rules $\t^+$ and $\t^-$ such that $\t^+(z)=e_1$ and $\t^-(z)=e_2$. 
By Lemma \ref{lm:grad flow} and \eqref{part ways} there exists  a geodesic from $z$   through $z+e_1$  (by following  minimal $\Bci_+$ gradients and using rule $\t^+$) and another through $z+e_2$ 
(by following  minimal $\Bci_-$ gradients and using rule $\t^-$). 
These two geodesics cannot coalesce because $\w_0$ has a continuous distribution.

Let $\zeta\cdot e_1<\cid(T_z\what)\cdot e_1.$ %<\eta\cdot e_1$. 
By  the limits in \eqref{B*} and monotonicity \eqref{eq:monotone}, 
%\begin{align}\label{B*4} 
%\begin{split}
%B^{\zeta}_{+}(\what,x,x+e_1)&\ge \Bci_{\pm}(\what,x,x+e_1)\ge B^{\eta}_{-}(\what,x,x+e_1)\\
%\text{ and}\qquad B^{\zeta}_{+}(\what,x,x+e_2)&\le \Bci_{\pm}(\what,x,x+e_2)\le B^{\eta}_{-}(\what,x,x+e_2). 
%\end{split} \end{align}  
\begin{align*}%\label{B*4} 
\begin{split}
B^{\zeta}_{+}(\what,x,x+e_1)&\ge\Bci_-(\what,x,x+e_1)\ge B^{\cid(T_z\what)}_{-}(\what,x,x+e_1)\\
%&\ge B^{\cid(T_z\what)}_+(\what,x,x+e_1)\ge \Bci_+(\what,x,x+e_1)\ge B^{\eta}_{-}(\what,x,x+e_1)\\
\text{ and}\qquad 
B^{\zeta}_{+}(\what,x,x+e_2)&\le\Bci_-(\what,x,x+e_2)\le B^{\cid(T_z\what)}_{-}(\what,x,x+e_2).%\\
%&\le B^{\cid(T_z\what)}_+(\what,x,x+e_2)\le \Bci_+(\what,x,x+e_2)\le B^{\eta}_{-}(\what,x,x+e_2).
\end{split} \end{align*}  
These inequalities imply that the  
geodesics that follow the minimal gradients of $\Bci_-$ stay 
  to the right of $x_{0,\infty}^{z,\zeta,+}$   and to the left of $x_{0,\infty}^{z,\t^-,\cid(T_z\what),-}$. 
%  They also say that 
%  the geodesics that follow the minimal gradients of $\Bci_+$ stay between   $x_{0,\infty}^{z,\eta,-}$   and $x_{0,\infty}^{z,\cid(T_z\what),+}$.
  By Theorem \ref{th:B-geod:direction} these latter geodesics are $\Uset_{\zeta+}$- and $\Uset_{\cid(T_z\what)-}$-directed, respectively.
Taking $\zeta\to\cid(T_z\what)$ shows the $\Bci_-$-geodesics are $\Uset_{\cid(T_z\what)-}$-directed. A similar argument gives that $\Bci_+$-geodesics are $\Uset_{\cid(T_z\what)+}$-directed.
Part \eqref{th:two-geo:i} is proved.

In part \eqref{th:two-geo:ii}  we prove the first claim, the other claim being similar.  The assumption allows us to pick a rational $\eta\in\ri\Uset$ such that
$\varliminf x_n'\cdot e_1/n<\underline\eta\cdot e_1\le\eta\cdot e_1<\cid\cdot e_1$. 
Since $\w_0$ has a continuous distribution and geodesic  $x_{0,\infty}^{0,\eta,-}$ is $\Uset_{\eta-}$-directed, geodesic $x'_{0,\infty}$ has to stay always to the left of it.
%(Tie breaking rules $\t$ are not necessary in the notation of the latter geodesic due to Lemma \ref{lm:geod:unique:t}.)
  \eqref{ci} implies $x_1^{0,\eta,-}=e_2$. Hence  also $x_1=e_2$. The claim is proved.

For part \eqref{th:two-geo:iii} fix first $\xi\in\Diff$, which implies   $B^{\xi}_{\pm}=B^\xi$.   By Lemma \ref{lm:geod:unique:t},   $B^\xi(0,e_1)\ne B^\xi(0,e_2)$ almost surely.    Let $\zeta\cdot e_1\searrow\xi\cdot e_1$ along rational points $\zeta\in\ri\Uset$.  By \eqref{eq:cont},  $B^{\zeta}_{\pm}(0, e_i)\to B^\xi(0,e_i)$ a.s.         Then on the event  $B^\xi(0,e_1)> B^\xi(0,e_2)$  there almost surely exists  a rational $\zeta$ such that $\zeta\cdot e_1>\xi\cdot e_1$ and  $B^{\zeta}_{\pm}(0,e_1)> B^{\zeta}_{\pm}(0,e_2)$.  By \eqref{ci} this forces  
$\cid\cdot e_1\ge\zeta\cdot e_1>\xi\cdot e_1$.   Similarly on the event $B^\xi(0,e_1)< B^\xi(0,e_2)$ we   have almost surely $\cid\cdot e_1<\xi\cdot e_1$.  The upshot is that 
$\P(\cid=\xi)=0$. 
%around $\xi$  there is a random open interval, almost surely nondegenerate, such that this interval has zero probability of containing $\cid$.  
%This shows that no $\xi\in\Diff$ can be an atom of $\cid$ or an accumulation point of atoms of $\cid$.
%A similar argument, using Lemma \ref{aux:ci}, gives the same result for $\xi\in\{e_1,e_2\}$. 
%To finish the proof of \eqref{th:two-geo:iii} we next show that if  $\xi\in(\ri\Uset)\smallsetminus\Diff$ then $\xi$ is an atom of $\cid$.

Now fix $\xi\in(\ri\Uset)\smallsetminus\Diff$. By Lemma \ref{geo-sep} there exists a $z$ such that with positive probability geodesics $x^{0,\xi,\pm}_{0,\infty}$ separate at $z$.
This separation implies that $B^\xi_-(z,z+e_2) <B^\xi_-(z,z+e_1)$ and $B^\xi_+(z,z+e_1) < B^\xi_+(z,z+e_2)$, which says that
$\cid(T_z\what) = \xi$ and  thus $\xi$ is an atom of $\cid$.  We have proved the second  statement in part \eqref{th:two-geo:iii}.
 
The non-exposed points of $\ri\Uset$ consist of open linear segments  of $\gpp$ and the  endpoints of these segments  that lie in $\Diff$.  
 Consider a segment $[\zeta, \eta]\subset\ri\Uset$ on which $\gpp$ is linear.  Theorem \ref{th:cocycles}\eqref{th:cocycles:flat} says
$B_+^\zeta = B^\xi =  B_-^\eta$ for all  $\xi\in\;]\zeta, \eta [\,$.  Hence the inequalities in \eqref{ci}   go the same way throughout the segment and therefore  $\cid \in\;]\zeta, \eta [$ has zero probability.   Points in $\Diff$ were taken care of above. 
%If furthermore $\zeta$ (respectively $\eta$) is in $\Diff$, then we have already shown that $\cid=\zeta$ (respectively $\cid=\eta$) also has zero probability.
Since there are at most countably many   linear segments, 
%Points in $(\ri\Uset)\smallsetminus\Diff$ are exposed. 
the first  claim in part \eqref{th:two-geo:iii} follows.

Part \eqref{th:two-geo:iv}.  Assume first $\zeta\cdot e_1<\eta\cdot e_1$.   
 $\Uset_{\zeta+}\ne\Uset_{\eta-}$ and directedness   (Theorem \ref{th:B-geod:direction}) force   the   cocycle geodesics $x_{0,\infty}^{u,\eta,-}$ and $x_{0,\infty}^{u,\zeta,+}$   to eventually separate.  
This is clear if $\zetamax\ne\etamin$ because then $\Uset_{\zeta+}$ and $\Uset_{\eta-}$ are disjoint. 
If, on the other hand, $\zetamax=\etamin=\xi$, then $\nabla\gpp(\xi-)=\nabla\gpp(\zeta+)$ and $\nabla\gpp(\xi+)=\nabla\gpp(\eta-)$. By Theorem \ref{th:cocycles}\eqref{th:cocycles:flat} we have 
$x_{0,\infty}^{u,\zeta,+}=x_{0,\infty}^{u,\xi,-}$ and $x_{u,\infty}^{0,\eta,-}=x_{0,\infty}^{u,\xi,+}$. 
The separation claim then follows from Lemma \ref{geo-sep}. 

Now that we know the two geodesics separate at some random point $z$ we have almost surely 
$B^\zeta_+(z,z+e_2 )  <  B^\zeta_+(z,z+e_1 )$. By continuity \eqref{eq:cont} there is almost surely a rational $\zeta'\in\ri\Uset$ with $\zeta'\cdot e_1>\zeta\cdot e_1$  such that
$B^{\zeta'}_+(z,z+e_2 )  <  B^{\zeta'}_+(z,z+e_1 )$. Now we have $\zeta\cdot e_1 < \zeta'\cdot e_1\le\cid(T_z\what)\cdot e_1$.  A similar argument shows $\eta\cdot e_1>\cid(T_z\what)\cdot e_1$.   Thus $\cid(T_z\what)\in\;]\zeta,\eta[$.
%This proves the first claim in \eqref{th:two-geo:iv}. 

Recall $B^*_\pm$ and $\t^\pm$  defined at and  below \eqref{B*} in terms of this $z=z(\what)$. 
Consider  two geodesics that start at $u$, follow  minimal  $\Bci_+$ and $\Bci_-$ gradients, and use  tie-breaking rules $\t^+$ and $\t^-$, respectively.
%(Note that now  and since $\t^\pm$ depend on $z$, they too are random.)
%By what we show in part \eqref{th:two-geo:i}, both geodesics are $\Uset_{\cid(T_z\what)}$-directed.
By monotonicity \eqref{eq:monotone} the two geodesics have to stay sandwiched between $x_{0,\infty}^{u,\zeta,+}$ and $x_{0,\infty}^{u,\eta,-}$
and therefore must pass through $z$.   Beyond $z$ these two geodesics are the ones discussed in the proof of part (i).   

In case (b) with $\xi\in(\ri\Uset)\smallsetminus\Diff$, Lemma \ref{geo-sep} gives the separation of $x^{u,\xi,\pm}_{0,\infty}$ at some random $z$.  Then $\cid(T_z\what)=\xi$ and the geodesics claimed in the theorem  are directly given by $x^{u,\xi,\pm}_{0,\infty}$.  \qed
%By \eqref{part ways} and the choice of $\t^+(z)$ and $\t^-(z)$, once the two geodesics are at $z$, they have to part ways. Since $\w_0$ has a continuous distribution, the two geodesics do not intersect after they  separate. The second claim in part \eqref{th:two-geo:iv} is proved. 
%They separate at a random point $z$. Define $\Bbarci_{\pm}(\what,x,y)=\Bci_{\pm}(\what,z+x,z+y)$. This is still a cocycle that recovers the weights.
%By the assumption on $\zeta$ and $\eta$ we have $B^{\zetamin}_{+}=B^{\zeta}_{+}$ and $B^{\etamax}_{-}=B^{\eta}_{-}$.
%By this and inequalities \eqref{B*4} (which now hold with $\zetamin$ and $\etamax$ in place of  $\zeta$ and $\eta$)  the two geodesics that start from $0$ and follow the minimal $\Bbarci_{\pm}$ increments (with an arbitrary tie-breaking rule $\t$) are sandwiched 
%between $x_{0,\infty}^{0,\eta,-}$ and $x_{0,\infty}^{0,\zeta,+}$. Therefore, they stay together until they reach $z$ where they separate and never coalesce. This finishes the proof of part \eqref{th:two-geo:iv}.
%\textcolor{red}{maybe we can tweak the argument a bit to get rid of the extra assumption on $\eta$ and $\zeta$ and thus merge \eqref{th:two-geo:v} with \eqref{th:two-geo:iv}? in any case the extra assumption does not hurt us and is enough 
%to conclude the density outside linear segments.}
\end{proof}

%\begin{proof}[Proof of Theorem \ref{edge-diff}]
%This is a direct consequence of Theorem \ref{th:two-geo}\eqref{th:two-geo:iii}: points in $\ri\Uset\smallsetminus\Diff$ are atoms of $\cid$ and cannot concentrate near $e_1$ or $e_2$. \qed
%\end{proof}

%\begin{corollary}\label{cor:ci-support}
%Assume $\P\{\w_0\le r\}$ is continuous in $r$ and $\gpp$ is differentiable at the endpoints of all  its linear segments. Then 
% $\cid$ lies almost surely outside the union of the closed  linear segments of $\gpp$.  Equivalently, $\cid$ is almost surely an exposed point.  
%\end{corollary}
%
%\begin{proof}
%By Theorem \ref{thm1}\eqref{thm1:cont}  each linear segment has a unique geodesic from $0$  directed into it.  Since there are at most countably many linear segments of $\gpp$, Theorem \ref{th:two-geo}\eqref{th:two-geo:i}   contradicts  
%  $\cid$ lying on a flat segment.    Under the differentiability assumption endpoints of flat segments are not exposed.  \qed
%\end{proof}

The next theorem identifies   the asymptotic direction of the competition interface $\varphi=(\varphi_k)_{0\le k<\infty}$ defined in Section \ref{sec:ci}.
 
  \begin{theorem}
Assume $\P\{\w_0\le r\}$ is continuous in $r$.
\begin{enumerate}[\ \ \rm(i)]
\item All limit points of the asymptotic velocity of the competition interface are in 
$\Uset_{\cid(\what)}$: for $\Phat$-almost every $\what$
\begin{align}\label{ci-lln1}
\cidmin(\what)\cdot e_1\le\varliminf_{n\to\infty} n^{-1}\varphi_n(\w)\cdot e_1\le\varlimsup_{n\to\infty} n^{-1}\varphi_n(\w)\cdot e_1\le\cidmax(\what)\cdot e_1.
\end{align}
\item If $\gpp$ is differentiable at the endpoints of its linear segments then $\cid$ is $\kS$-measurable and gives the asymptotic direction of the competition interface: $\Pbig$-almost surely
\begin{align}\label{ci-lln3}
\lim_{n\to\infty} n^{-1}\varphi_n(\w)=\cid(\what).
\end{align}
\end{enumerate}
\end{theorem}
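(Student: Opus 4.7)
The strategy is to sandwich the competition interface $\varphi$ between two semi-infinite cocycle geodesics furnished by Theorem~\ref{th:two-geo}\eqref{th:two-geo:i} applied with $z=0$: on a single event of full $\Pbig$-measure we obtain a semi-infinite geodesic $\pi^+$ from $0$ that passes through $e_1$ and is $\Uset_{\cid(\what)+}$-directed, together with a semi-infinite geodesic $\pi^-$ from $0$ that passes through $e_2$ and is $\Uset_{\cid(\what)-}$-directed, with $\pi^+$ and $\pi^-$ meeting only at $0$. Because $\P\{\w_0\le r\}$ is continuous, every finite geodesic is almost surely unique, so the tree $\cT_0$ is well-defined and, for $n\ge1$, $\pi^+_n$ lies in the $e_1$-subtree of $\cT_0$ and $\pi^-_n$ lies in the $e_2$-subtree.

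Reading off \eqref{ci:19} with $\varphi_{n-1}=(k+\tfrac12,n-k-\tfrac12)$, points in the $e_1$-subtree at level $n$ have first coordinate $\ge k+1=\varphi_{n-1}\cdot e_1+\tfrac12$ and points in the $e_2$-subtree at level $n$ have first coordinate $\le k=\varphi_{n-1}\cdot e_1-\tfrac12$. This gives the pathwise sandwich
\[
\pi^-_n \cdot e_1 + \tfrac12 \;\le\; \varphi_{n-1}\cdot e_1 \;\le\; \pi^+_n\cdot e_1 - \tfrac12 \qquad \text{for all } n\ge 1.
\]
Dividing by $n$, taking $\varliminf$ and $\varlimsup$, and using that $\Uset_{\cid+}\subset[\cid,\cidmax]$ and $\Uset_{\cid-}\subset[\cidmin,\cid]$ yields the bounds $\varliminf \varphi_n\cdot e_1/n \ge \cidmin(\what)\cdot e_1$ and $\varlimsup \varphi_n\cdot e_1/n \le \cidmax(\what)\cdot e_1$, which is \eqref{ci-lln1}.

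For part (ii), the aim is to show that under the extra hypothesis $\cidmin(\what)=\cid(\what)=\cidmax(\what)$ almost surely, so that part (i) collapses to the full limit \eqref{ci-lln3}. By Theorem~\ref{th:two-geo}\eqref{th:two-geo:iii}, $\cid$ is almost surely an exposed point of $\ri\Uset$. If $\cid\in\Diff$, exposedness rules out $\cid$ lying on any closed linear segment, hence $\Uset_\cid=\{\cid\}$. If $\cid\in(\ri\Uset)\smallsetminus\Diff$, the differentiability hypothesis prevents $\cid$ from being an endpoint of a non-degenerate linear segment (such endpoints lie in $\Diff$), and interior points of linear segments lie in $\Diff$ as well; so again $\Uset_\cid=\{\cid\}$. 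Thus $\cidmin=\cid=\cidmax$ a.s., and the full limit follows. For the $\kS$-measurability of $\cid$ I will invoke Remark~\ref{rm:cc}(ii): under the hypothesis each cocycle $B^\xi_\pm$ is a function of $\w$ alone, so the transitions in $\zeta\mapsto B^\zeta_\pm(0,e_1)-B^\zeta_\pm(0,e_2)$ used to define $\cid$ in \eqref{ci} are $\kS$-measurable.

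The main technical point that will need care is verifying the $\pm\tfrac12$ sandwich at every finite level, which requires the cocycle geodesics $\pi^\pm$ of Theorem~\ref{th:two-geo}\eqref{th:two-geo:i} to agree with branches of the almost-surely unique geodesic tree $\cT_0$. This should follow by combining the uniqueness of finite geodesics under continuous weights with the fact that every finite prefix of $\pi^\pm$ is itself a geodesic from $0$. Once that is in place, the rest of the argument is routine limit-passage using the directedness of $\pi^\pm$ and the structural analysis of exposed points in part (ii).
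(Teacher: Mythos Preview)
Your argument is correct and follows the same sandwich strategy as the paper: trap $\varphi$ between two semi-infinite geodesics out of $0$, one through $e_1$ and one through $e_2$, whose asymptotic directions pin down $[\cidmin,\cidmax]$. Part (ii) and the $\kS$-measurability argument are essentially identical to the paper's.

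The only real difference is in the choice of bounding geodesics for part (i). You invoke Theorem~\ref{th:two-geo}\eqref{th:two-geo:i} directly to obtain the $B^*_\pm$-geodesics $\pi^\pm$, which are already $\Uset_{\cid\pm}$-directed, and read off the bounds in one step. The paper instead picks \emph{deterministic} rational directions $\zeta,\eta$ with $\zeta\cdot e_1<\cid\cdot e_1<\eta\cdot e_1$, uses the fixed-direction cocycle geodesics $x^{0,\zeta,+}_{0,\infty}$ and $x^{0,\eta,-}_{0,\infty}$ (for which directedness from Theorem~\ref{th:B-geod:direction} holds on a single null-set-free event), obtains the sandwich $\underline\zeta\cdot e_1\le\varliminf\le\varlimsup\le\overline\eta\cdot e_1$, and then sends $\zeta,\eta\to\cid$. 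Your route is more direct but leans on the machinery already packaged inside Theorem~\ref{th:two-geo}\eqref{th:two-geo:i}; the paper's route is slightly more elementary in that it only needs directedness of cocycle geodesics in fixed directions, at the cost of a final limit passage. Either way the content is the same.
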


\begin{proof}
%First, by Theorem 6.3 every cocycle geodesic $x_{0,\infty}^{0,\t,\zeta\pm}$ (regardless of the tie breaking rule $\t$) 
%is directed in one of $\Uset_{\zeta\pm}$ for some $\zeta$. 
%Fix $\t\in\{e_1,e_2\}^{\Z^2}$. 
By \eqref{ci}, if $\zeta\cdot e_1<\cid(\what)\cdot e_1<\eta\cdot e_1$, then 
%$B^{\zeta\pm}(0,e_1)>B^{\zeta\pm}(0,e_2)$ and thus 
$x_1^{0,\zeta,\pm}=e_2$
%Similarly, if $\eta\cdot e_1>\cid(\what)\cdot e_1$, then 
and $x_1^{0,\eta,\pm}=e_1$.
Since the path  $\varphi$ separates the geodesics that go through $e_1$ and $e_2$, it  has to stay  between 
$x_{0,\infty}^{0,\zeta,+}$ and $x_{0,\infty}^{0,\eta,-}$. By Theorem \ref{th:B-geod:direction} these geodesics are $\Uset_{\zeta+}$ and $\Uset_{\eta-}$ directed,
and we have
\[\underline\zeta\cdot e_1\le\varliminf_{n\to\infty} n^{-1}\varphi_n\cdot e_1\le\varlimsup_{n\to\infty} n^{-1}\varphi_n\cdot e_1\le\overline\eta\cdot e_1.\]
Claim \eqref{ci-lln1} follows by taking $\zeta$ and $\eta$ to $\cid$.

If $\gpp$ is differentiable at the endpoints of its linear segments, these endpoints are not exposed.  Since $\cid$ is  exposed by Theorem \ref{th:two-geo}\eqref{th:two-geo:iii},  we have  $\cidmin=\cidmax=\cid$ and 
  \eqref{ci-lln3} follows from  \eqref{ci-lln1}. Furthermore,  cocycles are $\kS$-measurable and hence so is $\cid$.  \qed
\end{proof}

\begin{proofof}{Proof of Theorem \ref{thm:ci-1}}  Limit \eqref{ci-lln2} is in \eqref{ci-lln3}.  The fact that the limit lies in $\ri\Uset$ is in the construction in the paragraph that contains \eqref{ci}, and the properties  of the limit    are in Theorem  \ref{th:two-geo}
parts \eqref{th:two-geo:iii} and \eqref{th:two-geo:iv}.   
\qed\end{proofof}

\begin{proofof}{of Theorem \ref{thm:ci-2}}
Under the assumption of differentiability at endpoints of linear segments,  either $\Uset_\xi$ equals $\{\xi\}$ or  $\Uset_\xi$ has no exposed points.  
Hence, by Theorem \ref{th:two-geo}\eqref{th:two-geo:iii},  almost surely $\Uset_{\cid}=\{\cid\}$  and  $\cid(T_x\what)\ne\xi$ implies that $\cid(T_x\what)\notin\Uset_\xi$.  
Consequently one of the cases in \eqref{eq:thm5.3(ii)} covers simultaneously  all $\Uset_\xi$-directed geodesics in environment $T_x\what$ and no separation at $x$ can happen for such geodesics. 
By Theorem \ref{thm1}\eqref{thm1:exist} every geodesic is $\Uset_\xi$-directed for some $\xi\in\Uset$. 
One direction in part  \eqref{thm:ci-2:(i)} is proved. The other direction comes from Theorem \ref{th:two-geo}\eqref{th:two-geo:i}.
%Corollary \ref{cor:ci-support} implies that almost surely for any $x$ direction $\cid(T_x\w)$ is exposed and since it is assumed that $\gpp$ is differentiable 
%at the endpoints of its linear segments we get that $\Uset_{\cid}=\{\cid\}$.
%Then Theorem \ref{th:two-geo}\eqref{th:two-geo:ii}  implies that if there are two $\Uset_\xi$-geodesics that part ways
%at $x$, then $\cid(T_x\w)\in\Uset_\xi$ which, since $\cid(T_x\w)$ is exposed, implies  $\xi=\cid(T_x\w)$ and proves part \eqref{thm:ci-2:ii}. 

 %Part \eqref{thm:ci-2:ii} implies part \eqref{thm:ci-2:iii}. 
 Part  \eqref{thm:ci-2:(ii)} comes  from part \eqref{thm:ci-2:(i)} and  Theorem \ref{th:two-geo}\eqref{th:two-geo:iv}.
 \qed\end{proofof}

As mentioned at the end of Section \ref{sec:ci}, if   $\P\{\w_0\le r\}$ is not continuous in $r$, we have competition interfaces $\varphi^{(l)}$ and $\varphi^{(r)}$ for the trees of leftmost and rightmost geodesics.  Their limiting directions $\cidr(\what),\,\cidl(\what)\in\ri\Uset$ are defined by 
 \begin{align}\label{ci12}
\begin{split}
&B^{\zeta}_{\pm}(\what, 0,e_1)> B^{\zeta}_{\pm}(\what,0,e_2)\quad\text{if }\zeta\cdot e_1<\cidr(\what)\cdot e_1,\\
&B^{\zeta}_{\pm}(\what,0,e_1)=B^{\zeta}_{\pm}(\what,0,e_2)\quad\text{if }\cidr(\what)\cdot e_1<\zeta\cdot e_1<\cidl(\what)\cdot e_1\\
\text{and}\qquad&B^{\zeta}_{\pm}(\what,0,e_1)<B^{\zeta}_{\pm}(\what,0,e_2)\quad\text{if }\zeta\cdot e_1>\cidl(\what)\cdot e_1.
\end{split}
\end{align}
With this definition    limit  \eqref{ci-lln1}  is valid also with superscripts $(l)$ and $(r)$.    Consequently    $n^{-1}\varphi^{(a)}_n(\w)\to \cid^{(a)}(\what)$ a.s.\ for $a\in\{l,r\}$ under the assumption that $\gpp$ is strictly concave.  

\section{Exactly solvable models}\label{sec:solv}
 
%We go through some details for the exactly solvable models  discussed in Section \ref{subsec:solv}.   
 
%\subsection{Geometric weights} %\label{subsec:geometric}
%The weights $\{\w_x\}$  are  i.i.d.\   geometric %with $\P(\w_x=k)=(1- {\Ew}^{-1})^{k-1} {\Ew}^{-1}$ for $k\in\N$, 
%with mean $\Ew=\E(\w_0)>1$ and variance $\sigma^2=\Ew(\Ew-1)$.    
%with marginal distribution 
%\[  \P(\w_x=k)=(1- {\Ew}^{-1})^{k-1} {\Ew}^{-1}   \qquad\text{for $k\in\N$.}  \]
%We   sketch the calculations that derive the properties claimed in Section \ref{subsec:solv}.  

%The limiting time constant is  \[  \gpp(\xi)=\Ew(\xi\cdot e_1+ \xi\cdot e_2)+2\sigma \sqrt{(\xi\cdot e_1)(\xi\cdot e_2)}. \] 

%All points of $\ri\Uset$ are exposed points of differentiability.  Consequently Busemann functions \eqref{eq:grad:coc1}  exist in all directions $\xi\in\ri\Uset$.   Busemann functions have the following properties.   With $\alpha=\alpha(\xi)$ as in \eqref{g:a}, for each $\xi\in\ri\Uset$ and $i\in\{1,2\}$ the process  $\{ B^\xi(ke_i, (k+1)e_i): k\in\Z\}  $ is i.i.d.\ geometric with mean depending on $i$:   $\Ebig[B^\xi(ke_1, (k+1)e_1)]=\alpha$   and $\Ebig[B^\xi(ke_2, (k+1)e_2)]=\f(\alpha)$.
 
 We  derive here   \eqref{geom:theta}  for 
the distributions of $\cidr$ and $\cidl$  from  definition 
 \eqref{ci12}.  By  Sections~3.1 and 7.1 of \cite{Geo-Ras-Sep-15a-},  $B^{(a,1-a)}(0,e_1)$ and $B^{(a,1-a)}(0,e_2)$ are independent geometric random variables with means  
%\be \label{geom:B}  
\begin{align*} 
\E[B^\xi(0, e_j)]&=\E(\w_0)+\sigma  \sqrt{\xi\cdot e_{3-j}/\xi\cdot e_j}, \quad j=1,2.
%\\  \text{and} \qquad  \E[B^\xi(ke_2, (k+1)e_2)]&=\Ew+\sigma \sqrt{\xi\cdot e_1/\xi\cdot e_2}\,.
\end{align*}
%\ee
 The calculation  for $\cidr$ goes   
\begin{align*} \P\{\cidr\cdot e_1>a\} &= \P\{ B^{(a,1-a)}(0,e_1)>B^{(a,1-a)}(0,e_2) \} \\
%= \frac{\alpha-\Ew}{\alpha}\\ &
&= \frac{\sqrt{(\Ew-1)(1-a)}}{\sqrt{\Ew a}+ \sqrt{(\Ew-1)(1-a)}} 
\end{align*}
from which the first formula of \eqref{geom:theta} follows.   Similar computation for $\cidl$.

\appendix

\section{Auxiliary technical results}\label{app}

Cocycles satisfy a uniform ergodic theorem. The following is a special case of Theorem 9.3 of  \cite{Geo-etal-15-}. Note that a one-sided bound suffices for a hypothesis.   Recall  Definition \ref{def:cK} %for the space $\Coc$ 
of  stationary $L^1(\P)$ cocycles. 
% and the definition of tilt vector in \eqref{eq:EB}.
Let   $h(\B)\in\R^2$ denote the vector  that   satisfies 
	\[\E[\B(0,e_i)]=-h(\B)\cdot e_i \qquad\text{for  $i\in\{1,2\}$. }\]

\begin{theorem}\label{th:Atilla}
Assume $\P$ is ergodic under the group  $\{T_x\}_{x\in\Z^2}$. 
Let $B$ be a stationary $L^1(\P)$ cocycle.   Assume there exists  a function $V$ such that  for $\P$-a.e.\ $\w$  
\be \label{cL-cond}
\varlimsup_{\e\searrow0}\;\varlimsup_{n\to\infty} \;\max_{x: \abs{x}_1\le n}\;\frac1n \sum_{0\le k\le\e n} 
\abs{V(T_{x+ke_i}\w)}=0\qquad\text{for $i\in\{1,2\}$ }\ee  
and 
$\max_{i\in\{1,2\}} B(\w,0,e_i)\le V(\w)$.  
Then  
\[\lim_{n\to\infty}\;\max_{\substack{x=z_1+\dotsm+z_n\\z_{1,n}\in\{e_1, e_2\}^n}} \;\frac{\abs{B(\w,0,x)+h(B)\cdot x}}n=0 \qquad\text{for   $\P$-a.e.\ $\w$.}\]
\end{theorem}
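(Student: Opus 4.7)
My plan proceeds in three stages, reducing the problem to a uniform control of cocycle oscillations along the antidiagonal.

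\textbf{Stage 1 (Centering).} Replace $B$ by $\tilde B(\w, x, y) := B(\w, x, y) + h(B)\cdot(y - x)$, which is still a stationary $L^1(\P)$ cocycle, now with $\E[\tilde B(0, e_i)] = 0$ and $\tilde B(\w, 0, e_i) \le V(\w) + \max_i|h(B)\cdot e_i| =: V'(\w)$. Since the adjustment is a deterministic constant, $V'$ still obeys \eqref{cL-cond}. So we may assume $h(B) = 0$ and the target becomes
\begin{equation*}
\max_{|x|_1 = n,\ x\in\Z_+^2}\ \frac{|B(\w, 0, x)|}{n}\,\longrightarrow\, 0 \quad \P\text{-a.s.}
\end{equation*}

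\textbf{Stage 2 (Convergence along rational directions).} Fix a rational direction $\xi = (p, q-p)/q \in \Uset \cap \Q^2$ and consider the shift $\tau_\xi := T_{pe_1 + (q-p)e_2}$. Cocycle additivity gives
\begin{equation*}
\tfrac{1}{qm} B(\w, 0, mpe_1 + m(q-p)e_2) = \tfrac{1}{qm}\sum_{\ell=0}^{m-1} B(\tau_\xi^{\ell}\w,\, 0,\, pe_1 + (q-p)e_2),
\end{equation*}
and Birkhoff's $L^1$-ergodic theorem under $\tau_\xi$ yields an a.s.\ limit $\Psi(\w)$, $\tau_\xi$-invariant with mean $0$. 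To improve this to $\Psi \equiv 0$, I would show $\Psi$ is also $T_{e_1}$- and $T_{e_2}$-invariant: write $\Psi(T_{e_i}\w) - \Psi(\w)$ using additivity, whereupon the difference reduces to a limit of $\frac{1}{M}B(T_{Me_j}\w, 0, e_i)$-type boundary terms; these vanish a.s.\ by the elementary $L^1$ fact that $k^{-1}X\circ T^k \to 0$ a.s.\ for integrable $X$. Ergodicity of the full $\Z^2$-action then forces $\Psi = 0$ a.s.  Convergence for $n$ not a multiple of $q$ follows from $O(q)$-step corrections bounded by $V' + (V' - B(\cdot,0,e_i))$, both in $L^1$.

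\textbf{Stage 3 (Discretization and oscillation control).} Fix $\delta > 0$ and select a $\delta$-net of rational directions $\xi_0, \dots, \xi_J$ in $\Uset$, with corresponding anchor points $y_j^{(n)}$ on the antidiagonal $\{|z|_1 = n\}$ such that each $x$ with $|x|_1 = n$ is within $\ell^1$-distance $2\delta n$ of some $y_j^{(n)}$. By Stage 2, $\max_j |B(\w, 0, y_j^{(n)})|/n \to 0$ a.s. For the remainder, decompose $B(\w, y_j^{(n)}, x)$ via the NE corner $y_j^{(n)} \vee x$ into a signed pair of axis-aligned cocycle sums of length $\le 2\delta n$, whose base points lie within $O(\delta n)$ of $y_j^{(n)}$. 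The bound $\tilde B \le V'$ combined with \eqref{cL-cond} controls the positively signed sum uniformly in $x$, while the matching control of the negatively signed sum is obtained by re-expressing the same $B(\w, y_j^{(n)}, x)$ through the complementary SW-corner $y_j^{(n)} \wedge x$, which reroutes the subtracted sum into an added one along a different axis-aligned segment. Taking $n \to \infty$ and then $\delta \to 0$ concludes.

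\textbf{Main obstacle.} The principal difficulty lies in Stage 3: the hypothesis supplies only the one-sided bound $B \le V$, so a direct lower bound on negative portions of short cocycle sums is unavailable. The fix—using cocycle path-independence to reroute each "subtracted" sum into an "added" sum controllable by \eqref{cL-cond}—must be made uniform in the varying anchor $y_j^{(n)}$ as $n \to \infty$, so that the maximum in \eqref{cL-cond} absorbs all base points arising in the decomposition simultaneously. This combinatorial matching of dual-path bounds is the technical core of the argument.
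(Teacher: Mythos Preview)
The paper does not prove this theorem; it merely quotes it as a special case of Theorem~9.3 of \cite{Geo-etal-15-}. So there is no in-paper argument to compare against. Your Stages~1 and~2 are correct and standard: the centering is harmless, and the Birkhoff-plus-boundary-term route to zero along each fixed rational direction is valid (the step $m^{-1}X\circ T^m\to 0$ a.s.\ for $X\in L^1$ follows from Borel--Cantelli since $\sum_m\P(|X|>m\epsilon)<\infty$).

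Stage~3 has a genuine gap. With $y$ and $x$ on the same antidiagonal, say $x=y+k(e_1-e_2)$ with $k>0$, write $f_i(\w)=B(\w,0,e_i)$. The NE-corner route gives
\[
B(y,x)=\sum_{\ell=0}^{k-1} f_1(T_{y+\ell e_1}\w)\;-\;\sum_{\ell=0}^{k-1} f_2(T_{x+\ell e_2}\w),
\]
while the SW-corner route gives
\[
B(y,x)=\sum_{\ell=0}^{k-1} f_1(T_{x-ke_1+\ell e_1}\w)\;-\;\sum_{\ell=0}^{k-1} f_2(T_{y-ke_2+\ell e_2}\w).
\]
Both decompositions carry the \emph{same} sign pattern: an $f_1$-sum minus an $f_2$-sum. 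The one-sided hypothesis $f_i\le V'$ bounds each sum from above, but to bound $B(y,x)$ from above you need the subtracted $f_2$-sum bounded from \emph{below}, and neither route supplies that. The rerouting does not turn a subtracted sum into an added one; it only relocates where the two sums live. A clean repair is to abandon same-level anchors: for each $x$ with $|x|_1=n$ choose anchors $y^-\le x$ on level $\approx(1-\delta)n$ and $y^+\ge x$ on level $\approx(1+\delta)n$, each along one of your finitely many rational directions so that Stage~2 applies. Then both $B(y^-,x)$ and $B(x,y^+)$ are sums along \emph{monotone} up-right paths of length $O(\delta n)$, bounded above termwise by $V'$ and hence controlled by \eqref{cL-cond}. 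This yields $B(0,x)=B(0,y^-)+B(y^-,x)\le o(n)$ and $B(0,x)=B(0,y^+)-B(x,y^+)\ge -o(n)$, uniformly in $x$ after $n\to\infty$ and then $\delta\to 0$.
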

If the process  $\{V(T_x\w):x\in\Z^2\}$ is  i.i.d.,  then a sufficient condition for \eqref{cL-cond} is  $\E(\abs{V}^p)<\infty$ for some $p>2$  \cite[Lemma A.4]{Ras-Sep-Yil-13}.\medskip

The following is a deterministic fact about gradients of passage times. This idea has been used profitably in planar percolation, and goes back at least to \cite{Alm-98, Alm-Wie-99}.    See Lemma~6.3 of \cite{Geo-Ras-Sep-15a-} for a proof.   

\begin{lemma}\label{lm:new:comp0}
Fix $\w\in\Omega$. Let $u,v\in\Z^2_+$ be such that $\abs{u}_1=\abs{v}_1\ge1$ and  $u\cdot e_1\le v\cdot e_1$. Then
	\begin{align}
	&\Gpp_{0,u}-\Gpp_{e_1,u}\ge\Gpp_{0,v}-\Gpp_{e_1,v}\quad\text{and}\label{auxlmeq1}\\
	&\Gpp_{0,u}-G_{e_2,u}\le\Gpp_{0,v}-\Gpp_{e_2,v}.\label{auxlmeq2}
	\end{align}
\end{lemma}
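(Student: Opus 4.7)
The plan is to rewrite the inequality \eqref{auxlmeq1} in the superadditivity form
\[
G_{0,u} + G_{e_1,v} \;\ge\; G_{0,v} + G_{e_1,u}
\]
and to prove it by the classical path-crossing and tail-swap argument. First dispose of the degenerate case $u\cdot e_1=0$: then $G_{e_1,u}=-\infty$ by convention and the inequality is trivial; in the remaining case we have $u\ge e_1$ and (because $|u|_1=|v|_1$ and $u\cdot e_1\le v\cdot e_1$) also $v\ge e_2$, so all four passage times are finite.

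Fix a geodesic $\pi_1$ from $0$ to $v$ and a geodesic $\pi_2$ from $e_1$ to $u$. Write $\pi_i^k$ for the unique vertex of $\pi_i$ on the anti-diagonal $\{w:|w|_1=k\}$, and consider the integer-valued function $f(k)=\pi_1^k\cdot e_1-\pi_2^k\cdot e_1$ on $k\in\{1,\dots,n\}$ where $n=|v|_1$. At $k=1$ we have $\pi_1^1\in\{e_1,e_2\}$ and $\pi_2^1=e_1$, so $f(1)\le 0$; at $k=n$ we have $f(n)=v\cdot e_1-u\cdot e_1\ge 0$. Since each of $\pi_1,\pi_2$ advances its $e_1$-coordinate by $0$ or $1$ per step, $f$ jumps by at most $1$ in absolute value; by discrete intermediate-value, there is some level $k^\ast$ with $f(k^\ast)=0$, i.e.\ $\pi_1^{k^\ast}=\pi_2^{k^\ast}=:w$.

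At the common vertex $w$, splice the paths: concatenating the $0\to w$ initial segment of $\pi_1$ with the $w\to u$ terminal segment of $\pi_2$ gives an admissible path $0\to u$, and concatenating the $e_1\to w$ initial segment of $\pi_2$ with the $w\to v$ terminal segment of $\pi_1$ gives an admissible path $e_1\to v$. Because the weight of each passage time $G_{x,y}$ is summed over all vertices of the path except the endpoint $y$, the weight of $w$ is counted exactly once in each of the two original paths and exactly once in each of the two spliced paths; hence
\[
\mathrm{wt}(\pi_1)+\mathrm{wt}(\pi_2)=\mathrm{wt}(\pi_1^{0\to w}\!\cdot\pi_2^{w\to u})+\mathrm{wt}(\pi_2^{e_1\to w}\!\cdot\pi_1^{w\to v})\;\le\;G_{0,u}+G_{e_1,v},
\]
and the left side equals $G_{0,v}+G_{e_1,u}$. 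Inequality \eqref{auxlmeq2} is proved by the same scheme with the roles of $e_1,e_2$ interchanged and $u,v$ swapped (now $u$ lies weakly above $v$, and the signed difference of $e_2$-coordinates of the two geodesics passes through zero for the analogous reason).

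\textbf{Main obstacle.} The only nontrivial step is the crossing lemma: verifying that two up-right geodesics with interlocked endpoints must share a vertex. The bookkeeping for the swap is then mechanical, but one has to be careful that the shared vertex $w$ is counted consistently on both sides of the identity above, which is exactly what the endpoint convention in the definition of $G_{x,y}$ makes work out.
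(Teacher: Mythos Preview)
Your proof is correct and is precisely the classical path-crossing and tail-swap argument the paper alludes to; the paper itself does not prove the lemma but points to Lemma~6.3 of the companion article \cite{Geo-Ras-Sep-15a-} and the earlier references \cite{Alm-98,Alm-Wie-99} where this planar crossing idea originates. One small slip: where you write ``also $v\ge e_2$'' you need $v\ge e_1$ (which does follow from $v\cdot e_1\ge u\cdot e_1\ge 1$), since for \eqref{auxlmeq1} it is $G_{e_1,v}$ that must be finite.
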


%\begin{proof}
%We proceed by induction on $n=\abs{u}_1=\abs{v}_1\ge1$.
%
%As a first step, we verify that \eqref{auxlmeq1} holds for $v=ne_1$ and any $u\in\Z_2^+$ with $\abs{u}_1=n\ge1$.
%Indeed,
%	\[\Gpp_{0,u}-\Gpp_{e_1,u}\ge\w_0=\Gpp_{0,v}-\Gpp_{e_1,v}.\]
%(The inequality is immediate from the definition \eqref{Gxy1}.)
%Similarly, \eqref{auxlmeq2} holds for $u=ne_2$ and any $v\in\Z_2^+$ with $\abs{v}_1=n\ge1$.
%
%In particular, this covers the case $n=1$ and starts the induction.  Suppose now that \eqref{auxlmeq1} and \eqref{auxlmeq2} hold for all $\w\in\Omega$ and $u,v\in\Z^2_+$ such that $u\cdot e_1\le v\cdot e_1$ and
%$1\le\abs{u}_1=\abs{v}_1\le n$ for some $n\ge1$.
%
%Take $u,v\in\Z^2_+$ with $u\cdot e_1\le v\cdot e_1$ and $\abs{u}_1=\abs{v}_1=n+1$. Since we have already verified the cases where $u$ or $v$ are on the boundaries,
%$\Z_+e_2$ or $\Z_+e_1$, respectively, we can assume $u,v\in\N^2$.
%We can then apply the induction hypothesis to $u-e_1$ and $v-e_1$ with environment $T_{e_1}\w$ and to $u-e_2$ and $v-e_2$ with environment $T_{e_2}\w$.
%Thus \eqref{auxlmeq1} in the latter case and \eqref{auxlmeq2} in the former give us
%	\[\Gpp_{e_2,u}-\Gpp_{e_1+e_2,u}\ge\Gpp_{e_2,v}-\Gpp_{e_1+e_2,v}\quad\text{and}\quad\Gpp_{e_1,u}-G_{e_1+e_2,u}\le\Gpp_{e_1,v}-\Gpp_{e_1+e_2,v}.\]
%Now write
%	\[\Gpp_{0,u}-\Gpp_{e_1,u}=\w_0+(\Gpp_{e_2,u}-\Gpp_{e_1,u})^+\le \w_0+(\Gpp_{e_2,v}-\Gpp_{e_1,v})^+=\Gpp_{0,v}-\Gpp_{e_1,v}.\]
%(The  equalities are again immediate from the definition \eqref{Gxy1}.)
%This proves \eqref{auxlmeq1}. Inequality \eqref{auxlmeq2} is proved similarly.
%\end{proof}

 \begin{acknowledgements}
N.\ Georgiou was partially supported by a Wylie postdoctoral fellowship at the University of Utah and the Strategic
Development Fund (SDF) at the University of Sussex.
F.\ Rassoul-Agha and N.\ Georgiou were partially supported by National Science Foundation grant DMS-0747758.
F.\ Rassoul-Agha was partially supported by National Science Foundation grant DMS-1407574 and by Simons Foundation grant 306576.
T.\ Sepp\"al\"ainen was partially supported by  National Science Foundation grants DMS-1306777 and DMS-1602486, by Simons Foundation grant 338287, and by  the Wisconsin Alumni Research Foundation.
\end{acknowledgements}

%%%%%%%%%%%%%%%%%%%%%%%%%%%%%%%%%%

%\small  
%\footnotesize

% BibTeX users please use one of
%\bibliographystyle{spbasic}      % basic style, author-year citations
\bibliographystyle{spmpsci-nourlnodoi}      % mathematics and physical sciences
\bibliography{firasbib2010}   % name your BibTeX data base

%\AtEveryBibitem{\clearlist{doi}} % clears language
%\AtEveryBibitem{\clearfield{url}}    % clears notes

\end{document}